\documentclass[11pt]{amsart}
\usepackage{amsmath, amssymb, amsthm, tikz-cd}
\usepackage{geometry,graphicx}
\newcommand{\ddc}{dd^c}
\newcommand{\C}{\mathbb{C}}
\newcommand{\R}{\mathbb{R}}
\newcommand{\Q}{\mathbb{Q}}
\newcommand{\Z}{\mathbb{Z}}

\newtheorem{lemma}{Lemma}
\newtheorem{thm}{Theorem}
\newtheorem{prop}{Proposition}
\newtheorem{cor}{Corollary}
\theoremstyle{definition}

\newtheorem{remark}{Remark}

\begin{document}

\title{On the Hodge Extension of Local Kähler Forms} 
\author{Rasul Shafikov}
\address{Department of Mathematics, University of Western Ontario, London, Ontario N6A 5B7, Canada}
\email{shafikov@uwo.ca}
\begin{abstract}
Let $X$ be a Stein manifold, and let $K=\{h\le0\}$ be a compact sublevel set
of a strictly plurisubharmonic function $h$ defined near $K$. It is known that
$K$ is meromorphically convex in $X$ if and only if $dd^ch$ extends to a Hodge
form on $X$. We show that if $H_2(X,\mathbb Z)$ is the direct sum of a free
abelian group and a torsion group, for example if it is finitely generated,
then any extension of $dd^ch$ to a K\"ahler form on $X$ can be modified, after
replacing $h$ by a positive multiple, into a Hodge form. Thus meromorphic
convexity of $K$ is equivalent to the extendability of $dd^ch$ as a K\"ahler
form. We also show that the topological assumption cannot be dropped: we
construct a Stein manifold $X$ of dimension $6$ with
$H_2(X,\mathbb Z)\cong\mathbb Q$, a sublevel set $K$ as above that is not
meromorphically convex although $dd^ch$ extends to a K\"ahler form, and a
totally real torus in $K$ that is isotropic for a K\"ahler form but is not
meromorphically convex. This shows that the Hodge condition in Guedj's
characterization of meromorphically convex totally real submanifolds cannot
be weakened to a K\"ahler condition, and gives counterexamples to two criteria
stated by Cieliebak and Eliashberg.
\end{abstract}
\maketitle

\section{Introduction}

Let $X$ be a Stein manifold. By a complex hypersurface of $X$ we mean a closed
analytic subset of $X$ of pure codimension $1$. 
For a compact set $Q\subset X$, its \emph{meromorphically convex hull} is
\[
  \widehat Q_{\mathcal M(X)}=\{x\in X:\ \text{every complex hypersurface of $X$ through $x$ meets $Q$}\},
\]
and $Q$ is called \emph{meromorphically} (or \emph{hypersurface})
\emph{convex} if $\widehat Q_{\mathcal M(X)}=Q$. By~\cite[Prop. 1.2]{BS}, this is
precisely the hull of $Q$ with respect to meromorphic functions on $X$.

In~\cite{BGS} it was proved that a sublevel set $K=\{ h \le 0\}$ of a locally defined strictly plurisubharmonic 
function $h$ is meromorphically (or hypersurface) convex on a Stein manifold $X$ if and only if $dd^c h$ extends as 
a Hodge form to all of $X$, where by a Hodge form on a Stein manifold $X$ we mean a K\"ahler form $\Omega$ 
whose cohomology class $[\Omega] \in H^2(X, \R)$ lies in the image of the natural map $H^2(X, \Z) \to H^2(X, \R)$; equivalently, it has integral periods. Here and below, if $h$ is a function defined on an open set $U\supset K$, we
say that $dd^ch$ \emph{extends to a K\"ahler} (resp.\ \emph{Hodge}) \emph{form}
$\omega$ on $X$ if $\omega$ is a K\"ahler (resp.\ Hodge) form on $X$ with
$\omega=dd^ch$ on some neighbourhood of $K$.
On the other hand, Cieliebak and Eliashberg~\cite[Criterion~1.6]{CE} stated, without proof, that such convexity of $K$
(called rational convexity in~\cite{CE}) is
equivalent to the extension of $dd^c h$ merely as a Kähler form. The following theorem confirms that, up to replacing $h$
by a positive multiple, which does not change $K$, all three conditions
are equivalent under a mild additional assumption on the topology of $X$.

\begin{thm}\label{t.main}
Let $X$ be a Stein manifold such that
\begin{equation}\label{e.h2}
H_2(X, \mathbb{Z}) \cong \mathcal F \oplus \mathcal{T},
\end{equation}
where $\mathcal F$ is a free abelian group (possibly of infinite rank) and $\mathcal{T}$ is a torsion group.
Suppose that $U \subset X$ is an open set, $h: U \to \R$ is a strictly plurisubharmonic function, and $K=\{h \le 0 \}$ is a 
compact subset of $U$. Assume that $dd^c h$ extends to a Kähler form $\omega$ on $X$.
Then, there exists a positive integer $N$ such that $\ddc (Nh)$ extends to a Hodge form $\Omega_H$ on $X$.
In particular, this implies that $K$ is meromorphically convex in $X$.
\end{thm}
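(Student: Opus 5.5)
Write $[\omega]\in H^2(X,\R)$. Since $X$ is Stein, it has the homotopy type of a CW complex of real dimension at most $n=\dim_\C X$. By the universal coefficient theorem with real coefficients, $H^2(X,\R)\cong \mathrm{Hom}(H_2(X,\Z),\R)$, so $[\omega]$ is determined by its periods $\langle\omega,\sigma\rangle$ on $H_2(X,\Z)$. These periods vanish on $\mathcal T$ automatically. A class is in the image of $H^2(X,\Z)$ exactly when its periods on $H_2(X,\Z)$ are integers, because $\mathrm{Ext}(H_1(X,\Z),\Z)$ maps to zero in real cohomology and $H^2(X,\Z)\to\mathrm{Hom}(H_2,\Z)$ is surjective.

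The plan has two steps. First, perturb $\omega$ by a small closed form supported away from $K$ so that its periods become rational, with bounded denominators on the relevant part. Then multiply by an integer $N$ to clear denominators.

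The key observation concerns the cycles carried by a neighbourhood of $K$. Choose a regular value $\epsilon>0$ small enough that $K_\epsilon=\{h\le\epsilon\}$ is compact in $U$ and $\omega=dd^ch$ on a neighbourhood $V$ of $K_\epsilon$. Then $\omega|_V=d(d^ch)$ is exact on $V$, so $\omega$ has zero period on every class in the image of $H_2(V,\Z)\to H_2(X,\Z)$. Hence $[\omega]$ descends to a homomorphism on $H_2(X,V;\Z)$, or more usefully on $\mathcal F/(\text{image})$.

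Next I would construct corrections. Using the free basis $\{e_i\}$ of $\mathcal F$, define a homomorphism $\phi:H_2(X,\Z)\to\R$ that is zero on $\mathcal T$ and on the image $I$ of $H_2(V,\Z)$, and that rounds the period values to rationals. The delicate point is that $I$ need not be a direct summand. One option is to pass to the saturation $\tilde I=\{x:mx\in I \text{ for some } m\ge 1\}$, on which $\omega$ also has zero periods, since periods are $\R$-linear and vanish on $I$. The quotient $\mathcal F/(\tilde I\cap\mathcal F)$ is torsion-free. I need it free, or at least need the target functional to extend. Since $\R$ is divisible, any homomorphism extends, so there is some freedom here.

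Realizing the correction is the next step. Cohomology classes $c\in H^2(X,\R)$ vanishing on $I$ come from $H^2(X,V;\R)$, i.e.\ they can be represented by closed $2$-forms $\eta$ supported in $X\setminus \overline{V'}$ for a smaller neighbourhood $V'\supset K$. The point is that we want $\omega+\eta$ to be Kähler. Positivity is an open condition only on compacts, and $X$ is noncompact, so I would exploit the exhaustion. Since $X$ is Stein, the Kähler cone is large: for any closed real $(1,1)$-form $\eta$ one can add $dd^c\psi$ with $\psi$ a rapidly growing strictly psh exhaustion vanishing near $K$ (equal to $0$ on $V'$ after a convex modification of a global exhaustion shifted below $\max_K$), making $\omega+\eta+dd^c\psi$ positive everywhere. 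On a Stein manifold every de Rham class has a representative of type $(1,1)$ by Dolbeault vanishing, and the relative version with support away from $K$ follows from solving $\bar\partial$ on the complement after a cutoff. So the target becomes: find a class $c$ vanishing on $I$ with $[\omega]+c$ rational on the basis of $\mathcal F$, uniformly over a possibly infinite basis, and then $N$ clearing denominators.

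I expect the main obstacle to be the infinite rank case together with the saturation issue. One cannot make infinitely many periods rational with a common denominator unless the correction is arbitrary. In fact it is: choose $c$ so that $[\omega]+c$ is $0$ on a complement basis and equals the required values on $\tilde I$. But $[\omega]$ is already $0$ on $\tilde I$. So I would take $[\omega]+c=0$ on all of $\mathcal F$, which would make $\Omega_H$ exact with $N=1$. That looks too strong, and the subtlety is that $c$ must vanish on $I$ through a relative class, which requires $\omega|_V$ to be exact only as a form near $K$, which holds. So the real constraint is that $c$ lifts to $H^2(X,V;\R)$, i.e.\ kills $I$, and this is consistent. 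The necessary care lies in the real versus integral exact sequence: with $\R$ coefficients, a functional on $H_2(X)$ killing $I$ lifts, by exactness of $H^2(X,V;\R)\to H^2(X;\R)\to H^2(V;\R)$ and universal coefficients over a field. The integrality of the final class then needs the decomposition \eqref{e.h2}, which guarantees that $\mathrm{Hom}$ descends to $\mathcal F$ and allows the $N$-multiple to absorb the finite index $[\tilde I:I]$. This is where I expect the work to concentrate, and it is where the counterexample with $H_2\cong\Q$ should fail.
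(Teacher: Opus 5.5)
There is a genuine gap at the positivity step, and it leads you to a false conclusion. You represent the correction $c$ by a closed form $\eta$ that is only known to vanish on $V'$. You then claim positivity can be restored by adding $dd^c\psi$ with $\psi=G\circ\Psi$ vanishing near $K$. (A strictly plurisubharmonic $\psi$ cannot vanish on an open set, so this is what you must mean.) But then $dd^c\psi=0$ on all of $\{\Psi\le0\}$. That set is $\mathcal O(X)$-convex and contains $K$, hence contains $\widehat K_{\mathcal O(X)}$. More intrinsically, let $\psi$ be plurisubharmonic and constant near $K$, and let $A$ be a compact holomorphic curve with $\partial A$ in that neighbourhood. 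Then $\int_A dd^c\psi=\int_{\partial A}d^c\psi=0$, so $dd^c\psi|_A\equiv0$. Thus $\omega+\eta$ must already be positive near an $\mathcal O(X)$-convex compact set containing $K$, and nothing in your construction controls $\eta$ there.

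Your resulting claim that one can take $[\omega]+c=0$ (an exact Kähler extension with $N=1$) never uses \eqref{e.h2}, and Theorem~\ref{thm:counterexample}(iii) refutes it. In that example, a form $\omega+\eta$ equal to $dd^ch$ near $K$ with zero class satisfies $\int_A(\omega+\eta)=\int_{A-B}(\omega+\eta)+\int_B dd^ch=0$ on the holomorphic annulus $A$ with $\partial A\subset\Lambda$. By the above, $dd^c\psi$ cannot change this integral. Running the same construction with $H_1(Y,\Z)\cong\Z$ gives $H_2(X,\Z)\cong\Z$, so exact extensions can fail even when \eqref{e.h2} holds. The periods of $[\omega]$ on cycles through $\widehat K_{\mathcal O(X)}\setminus K$ are not free; they can only be perturbed slightly.

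What is missing is a way to change the class while keeping the form $C^0$-close to $\omega$ on a compact $\mathcal O(X)$-convex set $D=\{\Psi\le R\}\supset\overline{V'}$. Then positivity on $D$ survives, and Lemma~\ref{l.1}(b) finishes the job off $D$. The paper moves $[\omega]$ only to a \emph{nearby} rational class that still vanishes on the image of $H_2(V')$ (Lemma~\ref{l.ra}, which needs finitely generated $H_2$). It then absorbs the small difference $\Theta=\tilde\omega_D-\omega$, which is exact on $V'$, by $dd^c(\rho u)$ with $\|u\|_{C^2(V)}\le C\|\Theta\|_{C^{n+2}(V)}$ from Lemma~\ref{l.3}(b). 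Your cutoff of an arbitrary primitive gives exactness near $K$ but no smallness, and smallness is exactly what positivity on $D$ needs.

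In infinite rank, this is done on $X_1=\{\Psi<R_1\}$, also killing $\ker(H_2(X_1,\Z)\to H_2(X,\Z))$ so that the class descends. The rational class is then extended to $X$ by prescribing its periods on the saturation $\mathcal S_1$ of $\operatorname{pr}\Phi_1(H_2(X_1,\Z))$, and $0$ on a complement $\mathcal B$ and on $\mathcal T$. Your ``free choice'' of periods is legitimate only on $\mathcal B$; on $\mathcal S_1$ the periods are pinned near those of $[\omega]$. The common denominator $N$ comes from the finite generation of $\mathcal S_1$ (Lemma~\ref{l.sat}), which is available because of \eqref{e.h2}. For $H_2\cong\Q$, the saturation of any nonzero finitely generated subgroup is all of $\Q$, on which a nonzero rational functional has unbounded denominators.
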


Note that the decomposition~\eqref{e.h2} always holds if $H_2(X,\Z)$ is finitely generated, see Sec.~\ref{s.top} for details.
Further, by~\cite{AF}, a Stein manifold of dimension $n$ has the homotopy
type of a CW complex of dimension at most $n$. If $\dim_{\mathbb C}X=2$, the
cellular chain complex of such a CW complex has no $3$-chains, so
$H_2(X,\mathbb Z)$ is the group of cellular $2$-cycles, a subgroup of the free
abelian group of $2$-chains; hence, it is free. Thus, Theorem~\ref{t.main}
holds for every Stein surface. Also, Prop.~\ref{prop:exact} in Sec.~\ref{s.triv} shows that~\eqref{e.h2} is not
required if the image of $H_2$ of a neighbourhood of the holomorphically convex hull $\widehat K_{\mathcal O(X)}$ of $K$ in
$H_2(X,\Z)$ consists of torsion classes; in that case a K\"ahler extension
of $dd^ch$ can be taken exact.

But in general, as the next theorem shows, the topological hypothesis~\eqref{e.h2} in
Theorem~\ref{t.main} cannot be dropped. We construct a Stein manifold $X$ with
$H_2(X,\mathbb{Z})\cong\mathbb{Q}$, so that $H_2(X, \mathbb{Z})$ is not of the form
$\mathcal F\oplus\mathcal T$ (it is torsion-free but not free), and with $\operatorname{Im}\bigl(H^2(X,\mathbb{Z})\to H^2(X,\mathbb{R})\bigr)=0$,
so that every Hodge form on $X$ is exact (see Lemma~\ref{lem:topX}).
Together with $K$, we construct a smooth compact totally real torus
$\Lambda\subset\operatorname{int}K$ which is isotropic with respect to a Kähler form on $X$, but not with respect to
any Hodge form on $X$, and which is not meromorphically convex.

\begin{thm}\label{thm:counterexample}
There exist a Stein manifold $X$ of complex dimension $6$ with
$H_2(X,\mathbb{Z})\cong\mathbb{Q}$, an open set $U\subset X$, a strictly
plurisubharmonic function $h\colon U\to\mathbb{R}$ such that
$K=\{h\le 0\}$ is a compact subset of $U$, and a smooth compact totally real
torus $\Lambda\subset\operatorname{int}K$, such that the following hold.
\begin{enumerate}
\item[(i)] $dd^c h$ extends to a Kähler form $\omega$ on $X$.
\item[(ii)] The torus $\Lambda$ is isotropic for $\omega$, that is, $\omega|_\Lambda=0$.
\item[(iii)] For every $\lambda>0$, every K\"ahler form $\Omega$ on $X$
  extending $dd^c(\lambda h)$ satisfies $[\Omega]\neq 0$ in $H^2(X,\mathbb{R})$.
  In particular, for no positive integer $N$ does $dd^c(Nh)$ extend to a
  Hodge form on $X$.
  
\item[(iv)] The meromorphically convex hull $\widehat\Lambda_{\mathcal M(X)}$
  contains a holomorphic annulus $A$ with $\partial A\subset\Lambda$ and
  $A\not\subset K$. In particular, neither $\Lambda$ nor $K$ is
  meromorphically convex.
\end{enumerate}
\end{thm}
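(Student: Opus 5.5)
Only one key input from the paper is used here: Lemma~\ref{lem:topX}, which says that every Hodge form on $X$ is exact.

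The plan is to build $X$ from a model where a Lagrangian-type torus bounds a holomorphic annulus with nonzero symplectic area. The construction then forces any extension of $dd^c(\lambda h)$ to have a cohomology class pairing nontrivially with a 2-cycle obtained from the annulus.

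\textbf{Topology.} First fix the topology. I take a $6$-dimensional CW complex (a handlebody) whose $H_2$ is $\mathbb Q$. For this I use a mapping telescope of $S^2\xrightarrow{2}S^2\xrightarrow{3}S^2\xrightarrow{4}\cdots$, realized by attaching $3$-cells along the degree maps. This makes $H_2\cong\mathbb Q$, and by universal coefficients $H^2(X,\mathbb Z)\cong\operatorname{Ext}(H_1,\mathbb Z)\oplus\operatorname{Hom}(\mathbb Q,\mathbb Z)$, where the $\operatorname{Hom}$ term vanishes. So integral classes have zero image in $H^2(X,\mathbb R)$, and every Hodge form is exact.

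By Eliashberg's theorem, a handlebody of dimension at most $n=6$ admits a Stein structure. Real dimension $12$ leaves room to make attachments along totally real spheres. I would realize $X$ as an increasing union of Stein domains $X_1\subset X_2\subset\cdots$, obtained by successive handle attachments, chosen so that the first domain $X_1$ contains a model piece.

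\textbf{Model piece.} In $X_1$ I want a standard picture. Take a holomorphic annulus $A$ bounded by two circles on a totally real torus $\Lambda$. Choose a strictly psh $h$ near a compact $K$ containing $\Lambda$ in its interior but not all of $A$, arranged so that $dd^ch|_\Lambda=0$. For example, take $\Lambda$ to be a Clifford-type torus in $\mathbb C^2$ with a handle added so that the annulus closes up to a sphere generating $H_2$.

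The key computation is the pairing of $[\omega]$ with the generator of $H_2(X_1)\to H_2(X)\cong\mathbb Q$. This pairing equals $\int_A\omega$ plus terms from $dd^ch$ near $K$. By Stokes and isotropy of $\Lambda$, these combine to $\lambda$ times a fixed nonzero number $c$, and $c$ is independent of the extension. Hence every extension $\Omega$ of $dd^c(\lambda h)$ has $[\Omega]\neq0$, which gives (iii). Item (i) follows by gluing $dd^ch$ to a global Kähler form with a nonzero class, which exists because $H^2(X,\mathbb R)=\operatorname{Hom}(\mathbb Q,\mathbb R)\ne0$. Item (ii) holds by construction.

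\textbf{Meromorphic hull, (iv).} Here I argue by contradiction. Suppose some hypersurface $Z$ passes through a point of $A\setminus K$ and avoids $\Lambda$. By~\cite{BGS}-type arguments (Guedj, Duval–Sibony), $Z$ gives a Hodge form that is exact by Lemma~\ref{lem:topX}. Equivalently, $Z$ is the divisor of a holomorphic section of a topologically trivial bundle, hence of a holomorphic function $f$. Indeed, line bundles on a Stein $X$ are classified by $H^2(X,\mathbb Z)$, and the relevant part is torsion/Ext; passing to a power removes the torsion.

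Then $f$ has no zeros on $\partial A\subset\Lambda$ but vanishes at an interior point of $A$. The argument principle gives a nonzero winding of $f$ along $\partial A$. The task is to contradict this using the topology of $\Lambda$ inside $X$. I would arrange that the boundary circles of $A$ bound in $\Lambda$ a compensating cylinder, so that $A\cup(\text{cylinder})$ is a closed surface. The winding number of $f$ over this surface is the Chern number of the trivial bundle, which is zero. Meanwhile $f$ has no zeros on the cylinder in $\Lambda$, which yields the contradiction.

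\textbf{Main obstacle.} I expect the hardest step to be building a genuine Stein structure with $H_2\cong\mathbb Q$ that contains this model configuration. The annulus must remain holomorphic and the torus must remain totally real after infinitely many handle attachments. I would try the Eliashberg–Cieliebak flexible handle attachment to deform the structure away from a fixed neighbourhood of $K\cup A$.
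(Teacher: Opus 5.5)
There is a genuine gap, and it sits where the theorem is hard: items (i)--(ii). You never construct the configuration or the K\"ahler form.

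Your ``Clifford-type torus with a handle added so that the annulus closes up to a sphere'' does not give what the period argument needs. That is a cycle $\Sigma=A-B$ with $B\subset\Lambda$, $\partial A=\partial B$, and $[\Sigma]$ non-torsion in $H_2(X,\mathbb Z)$. If you close $A$ up with discs that are not contained in $\Lambda$, isotropy no longer kills their contribution, and you lose control of the sign of the period.

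More seriously, ``(i) follows by gluing $dd^ch$ to a global K\"ahler form with a nonzero class, which exists because $H^2(X,\mathbb R)\neq0$'' is not an argument. To glue with Lemma~\ref{l.1}(b) you need a closed $(1,1)$-form that equals $dd^ch$ near $K$ and is positive on an $\mathcal O(X)$-convex compact set containing $K$. Such a set contains $\widehat K_{\mathcal O(X)}$, and by the maximum principle this hull contains $A$. Consider the naive patch $w-dd^c(\chi f)$, where $w$ is a global representative and $dd^cf=w-dd^ch$ near $K$. Its error terms sit in the transition region of the cutoff, which $A$ must cross when it leaves $K$, and nothing in your argument controls them there. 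Producing a K\"ahler extension of $dd^ch$ for a $K$ whose hull contains such an $A$ is the whole content of the example.

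The paper does this by an explicit global construction:
\begin{enumerate}
\item[(1)] It takes $X=Y\times\mathbb C^*$ with $H_1(Y,\mathbb Z)\cong\mathbb Q$ and $H_2(Y,\mathbb Z)=0$.
\item[(2)] It uses the closed holomorphic $1$-form $\mu=\frac12(dw/w-\pi^*\beta)$ and the global K\"ahler form $\widehat\omega=dd^c\rho+i\mu\wedge\bar\mu$.
\item[(3)] It places $\Lambda$ over a figure-eight $\Gamma\subset\mathbb R^5$ so that $\rho=0$ and $v=0$ on $\Lambda$, where $\partial v=\mu$. Then $h_0=\rho+v^2$ is a local potential of a global K\"ahler form.
\item[(4)] The jump $\tau(1)=\ell(a)=1$ is what produces $A$.
\item[(5)] The degeneracy of $h_0$ along $C_0\cup C_1$ is removed by the $\operatorname{Re}z_2^2$ and $\operatorname{Re}z_1^2$ corrections.
\end{enumerate}
By comparison, the Stein topology you flag as the ``main obstacle'' is the easy part: it is Narasimhan's thickening of a totally real polyhedron, times $\mathbb C^*$.

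Two further steps are wrong as written.

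In (iii), the period is not ``$\lambda$ times a fixed nonzero $c$ independent of the extension, by Stokes and isotropy.'' On any part of $A$ where $\Omega=dd^c(\lambda h)$, Stokes together with $d^ch|_\Lambda=0$ gives zero. Moreover, $\int_A\Omega$ genuinely depends on $\Omega$ along $A\setminus K$. What works is that $\int_B\Omega=0$ by isotropy and $\int_A\Omega>0$ because $A$ is a complex curve (Lemma~\ref{lem:period}).

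In (iv), ``passing to a power removes the torsion'' is false in general. Holomorphic line bundles on a Stein manifold are classified by $H^2(X,\mathbb Z)$. An $\operatorname{Ext}$ term such as $\operatorname{Ext}(\mathbb Q,\mathbb Z)$, which is $H^2(X,\mathbb Z)$ for the paper's $X$, is a nonzero $\mathbb Q$-vector space, so no power of $[Z]$ need be trivial. For your simply connected $S^2$-telescope one would have $H^2(X,\mathbb Z)=0$, but you do not use this. The robust argument never trivializes $L=[Z]$. Its Chern form has zero real class because $\mathcal H_X=0$, so $\int_\Sigma c_1(L)=0$. On the other hand, Poincar\'e--Lelong on $A$ and Stokes on $B\subset X\setminus Z$ give $\int_\Sigma c_1(L)=\sum_j m_j\ge1$, a contradiction.
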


The theorem shows that the equivalence between meromorphic convexity of $K$ and the
extendability of $dd^c h$ as a Kähler form, stated in \cite[Criterion 1.6]{CE}, does not
hold on arbitrary Stein manifolds.

Concerning (ii) and (iv): a classical result of Duval and Sibony~\cite{DS}
states that a smooth compact totally real submanifold of $\mathbb{C}^n$ is
rationally convex (in $\mathbb{C}^n$ this is the same as meromorphically
convex) if and only if it is isotropic with respect to some Kähler form on
$\mathbb{C}^n$. Guedj~\cite{Gu} extended this to Stein manifolds: a smooth
compact totally real submanifold of a Stein manifold $X$ is meromorphically
convex if and only if it is isotropic with respect to some \emph{Hodge} form
on $X$, and in~\cite[Theorem~3.2]{BS} it was shown that isotropy with
respect to an exact Kähler form $dd^c\varphi$ is equivalent to strong meromorphic convexity. In
$\mathbb{C}^n$ every Kähler form is exact, so these statements agree
with~\cite{DS}. By (ii) and (iv), the torus $\Lambda$ is isotropic for a
Kähler form but is not meromorphically convex, which gives a counterexample
to \cite[Criterion~1.8(a)]{CE}. By Guedj's theorem, $\Lambda$ is not
isotropic for any Hodge form on $X$. This can also be seen directly: every
Kähler form for which $\Lambda$ is isotropic has a positive period on an
explicit $2$-cycle (Lemma~\ref{lem:period}), while on $X$ the only integral
class is $0$.

The construction in Theorem~\ref{thm:counterexample} uses the following
ingredients (see Figure~\ref{fig:lambda} in Sec.~\ref{sec:counterexample}):
a Stein manifold $Y$ with
$$
\operatorname{Hom}(H_1(Y,\mathbb{Z}),\mathbb{Z})=\operatorname{Hom}(H_2(Y,\mathbb{Z}),\mathbb{Z})=0
\quad\text{and}\quad H_1(Y,\mathbb{Z})\otimes\mathbb{Q}\neq0 .
$$
Such a manifold $Y_H$ of dimension $2$ was constructed by
Hulek~\cite[Prop.~2]{Hu}; it is the complement in the unit bidisc of a sequence
of smooth analytic curves (see also \cite[Example~1]{Co}). Our construction is
motivated by this example but is done differently: our $Y$ is a domain in
$\C^5$ that deformation retracts onto a domain in $\R^5$ with these
properties. In $Y\cap\R^5$ we place a smooth figure-eight curve $\Gamma$ whose
two loops represent a nonzero class in $H_1(Y,\mathbb{Q})$ and its negative.
The torus $\Lambda\subset X=Y\times\C^*$ lies over $\Gamma$; its fibres are
circles in $\mathbb{C}^*$ centred at $0$, with radius varying smoothly along
$\Gamma$ and taking different values on the two branches over the double point.
Since $\Gamma\subset\R^5$, which is totally real, the required
plurisubharmonic functions can be written down explicitly; this is used to
construct a global K\"ahler form and a local potential vanishing to second
order exactly on $\Lambda$. Although somewhat technical, this step uses only elementary
properties of plurisubharmonic functions, gluing with cutoff functions, and an
explicit correction by pluriharmonic functions near the double point.

The same construction can be carried out in Hulek's domain $Y_H$, which gives,
with $X=Y_H\times\C^*$, an example as in Theorem~\ref{thm:counterexample} with
$\dim_{\mathbb C}X=3$. Indeed, $Y_H\subset\mathbb{C}^2$ satisfies the
topological conditions above, and a figure-eight $\Gamma\subset Y_H$ with the
required homology, straight near its double point, can be made
$\mathcal{O}(Y_H)$-convex by a small perturbation away from the double point.
A nonnegative strictly plurisubharmonic function on $Y_H$ vanishing on
$\Gamma$ to second order is then obtained by gluing a local model near
$\Gamma$ with a global strictly plurisubharmonic function, using the
$\mathcal{O}(Y_H)$-convexity of $\Gamma$, and the rest of the argument in
Sec.~\ref{sec:counterexample} applies with only notational changes. We
chose to present the higher-dimensional version because there the
construction of the local potential is particularly explicit and elementary,
and no perturbation of $\Gamma$ is needed. Since Theorem~\ref{t.main} holds
for all Stein surfaces, the dimension $3$ is optimal.

\medskip

\subsection*{Acknowledgments}
The author obtained the proof of Theorem~\ref{t.main} in the fall of 2025,
but all attempts to remove assumption~\eqref{e.h2} were unsuccessful. The
counterexample of Theorem~\ref{thm:counterexample} was eventually found with
the help of AI, specifically Claude Fable~5.1, which proposed
the original construction; the construction presented here has been
substantially simplified. AI was also used to assist with
editing the exposition. The author bears full responsibility for the
correctness of all proofs and for the content of this paper. The author thanks Purvi Gupta for
many valuable discussions related to the problem, and in particular for
identifying $H_2(X,\mathbb Z)\cong\mathbb Q$ as a potential source of a
counterexample. The author is partially supported by the Natural Sciences and
Engineering Research Council of Canada.

\section{Background: periods, Hodge classes, Levi form}\label{s.top}

Sections~\ref{ssec:periods} and~\ref{ssec:decomp} collect in one place the facts relating homology and cohomology
that are used throughout the paper: the universal coefficient theorem, periods
of closed forms, integral (Hodge) classes, and the structure of $H_2(X,\Z)$.
The results are standard. Most of the material can be found in
\cite[Section~3.1]{Hatcher} for the universal coefficient theorem and the
properties of $\operatorname{Ext}$, and in \cite[Chapter~18]{Lee} for the
de Rham theorem in terms of integration over smooth singular chains. We include
short proofs to fix the notation. Sec.~\ref{ssec:levi} discusses the Levi form with some 
estimates that will be used in the proof of the main results.

In the next two subsections $X$ is an arbitrary smooth manifold.

\subsection{Periods and the universal coefficient theorem}\label{ssec:periods}

If $\beta$ is a closed $k$-form on $X$ and $C$ is an integral $k$-cycle, the
period $\int_C\beta$ depends only on the de Rham class $[\beta]$ and on the
homology class $[C]\in H_k(X,\Z)$. Hence, a class
$c\in H^k(X,\R)$ (or $H^k(X,\C)$) defines a homomorphism
$\langle c,\cdot\rangle\colon H_k(X,\Z)\to\R$ (or $\C$), the \emph{period
homomorphism} of $c$. It vanishes on torsion classes, because $\R$ and $\C$
are torsion-free. 

For every abelian group $A$ the universal coefficient theorem
gives a short exact sequence
\begin{equation}\label{eq:UCT}
0\to\operatorname{Ext}\bigl(H_{k-1}(X,\Z),A\bigr)\to H^k(X,A)\to
\operatorname{Hom}\bigl(H_k(X,\Z),A\bigr)\to0,
\end{equation}
which is natural in $A$ and in $X$.

\begin{prop}\label{p.UCT}
We have the following.
\begin{enumerate}
\item[(a)] The period map
  $H^2(X,\R)\to\operatorname{Hom}(H_2(X,\Z),\R)$ is an isomorphism. In particular,
  a closed real $2$-form on $X$ is exact if and only if all its periods on
  integral $2$-cycles vanish, two closed real $2$-forms are cohomologous if and
  only if they have the same periods, and every homomorphism
  $H_2(X,\Z)\to\R$ is the period homomorphism of a unique class in
  $H^2(X,\R)$. Moreover, $H^2(X,\Q)\to H^2(X,\R)$ is injective, with image the
  set of classes whose periods are rational.
\item[(b)] Let $\mathcal H_X$ be the image (the \emph{Hodge lattice}) of
  $H^2(X,\Z)\to H^2(X,\R)$. A class $c\in H^2(X,\R)$ lies in $\mathcal H_X$ if and
  only if $\langle c,x\rangle\in\Z$ for every $x\in H_2(X,\Z)$. Thus the period
  map identifies $\mathcal H_X$ with $\operatorname{Hom}(H_2(X,\Z),\Z)$.
\item[(c)] If $\operatorname{Hom}(H_2(X,\Z),\Z)=0$, then $\mathcal H_X=0$.
\end{enumerate}
\end{prop}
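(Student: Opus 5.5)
The plan is to read everything off the universal coefficient sequence~\eqref{eq:UCT} with $k=2$, using that $\operatorname{Ext}(\cdot,A)$ vanishes when $A$ is divisible. Combined with the de Rham theorem (in the form of integration over smooth singular chains), this identifies the period map with the map in~\eqref{eq:UCT}.

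For (a), take $A=\R$. Since $\R$ is divisible, hence injective as a $\Z$-module, we have $\operatorname{Ext}(H_1(X,\Z),\R)=0$. So $H^2(X,\R)\to\operatorname{Hom}(H_2(X,\Z),\R)$ is an isomorphism for singular cohomology.

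By the de Rham theorem, the de Rham class $[\beta]$ corresponds to the singular cochain $\sigma\mapsto\int_\sigma\beta$ on smooth simplices. Under this identification the UCT map is exactly $[\beta]\mapsto(C\mapsto\int_C\beta)$, so the period map is an isomorphism. The three stated consequences follow directly:
\begin{itemize}
\item exactness of $\beta$ is equivalent to vanishing of all its periods (injectivity);
\item two forms are cohomologous iff their periods agree (injectivity applied to the difference);
\item every homomorphism $H_2(X,\Z)\to\R$ is realized by a unique class (surjectivity).
\end{itemize}

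For the rational statement, apply the same argument with $A=\Q$, which is also divisible. By naturality of~\eqref{eq:UCT} in $A$, we get a commutative square whose horizontal arrows are isomorphisms and whose right vertical arrow is
\[
\operatorname{Hom}(H_2,\Q)\to\operatorname{Hom}(H_2,\R).
\]
This map is injective, so $H^2(X,\Q)\to H^2(X,\R)$ is injective. Its image is the set of homomorphisms with values in $\Q$, that is, the classes with rational periods.

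For (b), take $A=\Z$ and again use naturality with respect to $\Z\to\R$. The composite
\[
H^2(X,\Z)\to\operatorname{Hom}(H_2,\Z)\hookrightarrow\operatorname{Hom}(H_2,\R)\cong H^2(X,\R)
\]
equals the coefficient map $H^2(X,\Z)\to H^2(X,\R)$ followed by the period isomorphism. The first arrow is surjective by~\eqref{eq:UCT}; its kernel is the Ext term, which simply dies in $\mathcal H_X$. So $\mathcal H_X$ is identified, via periods, with $\operatorname{Hom}(H_2(X,\Z),\Z)$ viewed inside $\operatorname{Hom}(H_2,\R)$. Concretely, $c\in\mathcal H_X$ if and only if all periods of $c$ are integers. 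Part (c) is then immediate: $\mathcal H_X\cong\operatorname{Hom}(H_2(X,\Z),\Z)=0$.

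The only real subtlety is in (a): one has to check that the de Rham isomorphism is compatible with the Kronecker pairing. That is, the singular cohomology class attached to $\beta$ evaluates on a cycle $C$ by integration, and the same holds for an arbitrary integral cycle, which can be replaced by a homologous smooth cycle. I will cite \cite[Chapter~18]{Lee} for this and \cite[Section~3.1]{Hatcher} for naturality of~\eqref{eq:UCT} and the vanishing of $\operatorname{Ext}$ into divisible groups.
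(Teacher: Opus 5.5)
Your proposal is correct and follows essentially the same route as the paper. Both arguments use the universal coefficient sequence with $\operatorname{Ext}(\cdot,A)=0$ for the divisible groups $A=\R,\Q$, identify the UCT map with the period map via de Rham, use naturality in the coefficients for the rational statement, and use surjectivity of the UCT map for $A=\Z$ to get (b), with (c) immediate. Your remark about compatibility of the de Rham isomorphism with the Kronecker pairing is exactly the point the paper passes over with the phrase ``composed with the de Rham isomorphism, this map is the period map.''
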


\begin{proof}
(a) The groups $\R$ and $\Q$ are divisible, hence injective $\Z$-modules, so
$\operatorname{Ext}(H_1(X,\Z),A)=0$ for $A=\R,\Q$, and~\eqref{eq:UCT} with $k=2$
gives isomorphisms $H^2(X,A)\cong\operatorname{Hom}(H_2(X,\Z),A)$. For $A=\R$,
composed with the de Rham isomorphism, this map is the period map, which gives
the first three statements. By naturality of~\eqref{eq:UCT} in $A$, the map
$H^2(X,\Q)\to H^2(X,\R)$ induced by the inclusion $\Q\subset\R$ corresponds to
$\operatorname{Hom}(H_2(X,\Z),\Q)\to\operatorname{Hom}(H_2(X,\Z),\R)$, which is
injective with image the $\Q$-valued homomorphisms. This gives the last
statement.

(b) If $c$ is the image of $c'\in H^2(X,\Z)$, then
$\langle c,x\rangle=\langle c',x\rangle\in\Z$ by naturality in $A$. Conversely,
if all periods of $c$ are integers, then $\chi=\langle c,\cdot\rangle$ lies in
$\operatorname{Hom}(H_2(X,\Z),\Z)$. By the surjectivity in~\eqref{eq:UCT} with
$A=\Z$, there is $c'\in H^2(X,\Z)$ whose period homomorphism is $\chi$. Its image
in $H^2(X,\R)$ also has period homomorphism $\chi$, so it equals $c$ by (a).

(c) follows from (b).
\end{proof}

A \emph{Hodge form} on a complex manifold $X$ is a K\"ahler form $\Omega$ with
$[\Omega]\in\mathcal H_X$. By Prop.~\ref{p.UCT}(b), $\Omega$ is a Hodge form
if and only if all its periods on integral $2$-cycles are integers. By
Prop.~\ref{p.UCT}(c), if $\operatorname{Hom}(H_2(X,\Z),\Z)=0$ then a
K\"ahler form is a Hodge form if and only if it is exact; on a Stein manifold this
means that it is of the form $dd^c\varphi$ with $\varphi$ strictly plurisubharmonic
(Remark~\ref{rem:exact}).

\subsection{The decomposition of $H_2(X,\Z)$}\label{ssec:decomp}

Suppose that $H_2(X,\Z)=\mathcal F\oplus\mathcal T$ as in~\eqref{e.h2}, with
$\mathcal F$ a free abelian group (of possibly infinite rank) and $\mathcal T$
a torsion group. This holds, for instance, whenever $H_2(X,\Z)$ is finitely
generated, by the structure theorem for finitely generated abelian groups. Since $\mathcal F$ is
torsion-free, $\mathcal T$ is the torsion subgroup of $H_2(X,\Z)$. The
subgroup $\mathcal F$ is not canonical, but we fix one such choice, and let
$\operatorname{pr}\colon H_2(X,\Z)\to\mathcal F$ be the projection with kernel
$\mathcal T$. Any homomorphism from $H_2(X,\Z)$ to a torsion-free group (such as
$\Z,\Q,\R$) vanishes on $\mathcal T$, so it is determined by its restriction
to $\mathcal F$, and $\langle c,x\rangle=\langle c,\operatorname{pr}(x)\rangle$.
Prop.~\ref{p.UCT} therefore gives the following.

\begin{cor}\label{c.F}
If $H_2(X,\Z)=\mathcal F\oplus\mathcal T$ as above, then restriction to
$\mathcal F$ identifies $H^2(X,\R)$ with $\operatorname{Hom}(\mathcal F,\R)$,
the lattice $\mathcal H_X$ with $\operatorname{Hom}(\mathcal F,\Z)$, and the image
of $H^2(X,\Q)$ with $\operatorname{Hom}(\mathcal F,\Q)$. If $\mathcal F$ has finite
rank $b$ and $C_1,\dots,C_b$ are integral $2$-cycles whose classes form a basis
of $\mathcal F$, then the period map
\[
[\beta]\longmapsto\Big(\int_{C_1}\beta,\dots,\int_{C_b}\beta\Big)
\]
is an isomorphism $H^2(X,\R)\cong\R^b$ that maps $\mathcal H_X$ onto $\Z^b$ and
the rational classes onto $\Q^b$. In particular, a real class is Hodge if and
only if its periods on $C_1,\dots,C_b$ are integers, and every rational class
becomes integral after multiplication by a common denominator of its periods.
\end{cor}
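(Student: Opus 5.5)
The plan is to deduce Corollary~\ref{c.F} directly from Prop.~\ref{p.UCT}, using only the observation made just before the statement: every homomorphism from $H_2(X,\Z)=\mathcal F\oplus\mathcal T$ to a torsion-free group $A$ vanishes on $\mathcal T$. For $A=\R,\Q,\Z$, restriction to $\mathcal F$ is therefore a bijection
\[
\operatorname{Hom}(H_2(X,\Z),A)\to\operatorname{Hom}(\mathcal F,A),
\]
with inverse $\chi\mapsto\chi\circ\operatorname{pr}$. Composing this bijection with the period isomorphisms of Prop.~\ref{p.UCT} gives the three identifications.

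\begin{itemize}
\item Prop.~\ref{p.UCT}(a) identifies $H^2(X,\R)$ with $\operatorname{Hom}(H_2(X,\Z),\R)$, hence with $\operatorname{Hom}(\mathcal F,\R)$.
\item Prop.~\ref{p.UCT}(b) identifies $\mathcal H_X$ with $\operatorname{Hom}(H_2(X,\Z),\Z)$, hence with $\operatorname{Hom}(\mathcal F,\Z)$.
\item The last statement of Prop.~\ref{p.UCT}(a) identifies the image of $H^2(X,\Q)$ with the $\Q$-valued period homomorphisms, hence with $\operatorname{Hom}(\mathcal F,\Q)$.
\end{itemize}

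For the finite-rank case, let the classes $[C_1],\dots,[C_b]$ form a basis of $\mathcal F\cong\Z^b$. A homomorphism $\mathcal F\to A$ is uniquely determined by its values on this basis, and these values may be arbitrary. Hence
\[
\operatorname{Hom}(\mathcal F,A)\to A^b,\qquad \chi\mapsto\bigl(\chi([C_1]),\dots,\chi([C_b])\bigr)
\]
is an isomorphism for $A=\R,\Z,\Q$. Composing with the identifications above, the map $[\beta]\mapsto(\int_{C_1}\beta,\dots,\int_{C_b}\beta)$ is an isomorphism $H^2(X,\R)\cong\R^b$. It carries $\mathcal H_X$ onto $\Z^b$ and the rational classes onto $\Q^b$. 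It is well defined because the period $\int_{C_i}\beta$ depends only on $[\beta]$ and $[C_i]$.

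The final assertions then follow. A real class is Hodge exactly when its image lies in $\Z^b$, that is, when its periods on $C_1,\dots,C_b$ are integers. Now let $c$ be a rational class, so its periods $p_i$ on the $C_i$ are rational, and let $q$ be a common denominator of $p_1,\dots,p_b$. Then $qc$ has integer periods $qp_i$, so $qc\in\mathcal H_X$.

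No step here is a genuine obstacle; the argument is bookkeeping. The only point needing care is that restriction to $\mathcal F$ is a bijection even though $\mathcal F$ is not canonical. This holds for any choice of complement, since $\mathcal T$ is always the full torsion subgroup and every homomorphism to a torsion-free group kills it.
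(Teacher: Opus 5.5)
Your proposal is correct and follows the paper's route: the paper likewise obtains the corollary directly from Prop.~\ref{p.UCT} together with the observation that every homomorphism to $\Z$, $\Q$ or $\R$ kills $\mathcal T$, so restriction to $\mathcal F$ is a bijection. Your explicit treatment of the finite-rank case, evaluating on a basis and using a common denominator, is just the bookkeeping the paper leaves implicit.
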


We shall also need two algebraic lemmas. Both are used for
infinitely generated $\mathcal F$, where the finitely generated pieces must be
controlled by hand.

\begin{lemma}\label{l.sat}
Let $\mathcal F$ be a free abelian group, let $I\subset\mathcal F$ be a finitely
generated subgroup, and let
$\mathcal S=\{x\in\mathcal F:\ nx\in I\ \text{for some } n\ge1\}$
be the {\it saturation} of $I$. Then
$\mathcal S$ is a finitely generated direct summand of $\mathcal F$, and
$\mathcal F/\mathcal S$ is free.
\end{lemma}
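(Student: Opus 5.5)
Fix a basis $\{e_j\}_{j\in J}$ of the free abelian group $\mathcal F$. Since $I$ is generated by finitely many elements and each of them is a finite $\Z$-combination of basis vectors, there is a finite subset $J_0\subset J$ with $I\subset\mathcal F_0:=\bigoplus_{j\in J_0}\Z e_j$. The plan is to work first inside $\mathcal F_0$, which is free of finite rank, and then transfer the conclusions back to $\mathcal F$.

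First we locate $\mathcal S$. Write $\mathcal F=\mathcal F_0\oplus\mathcal F_1$ with $\mathcal F_1=\bigoplus_{j\notin J_0}\Z e_j$. If $x=x_0+x_1$ and $nx\in I\subset\mathcal F_0$ for some $n\ge1$, then $nx_1=0$. Since $\mathcal F_1$ is torsion-free, $x_1=0$. Hence $\mathcal S\subset\mathcal F_0$, and $\mathcal S$ is the saturation of $I$ inside $\mathcal F_0$. As a subgroup of the finitely generated free group $\mathcal F_0\cong\Z^m$, it is finitely generated and free.

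Next we show $\mathcal S$ is a direct summand of $\mathcal F_0$. The quotient $\mathcal F_0/\mathcal S$ is finitely generated. It is also torsion-free: if $k\bar y=0$ with $k\ge1$, then $ky\in\mathcal S$, so $nky\in I$ for some $n\ge1$, which gives $y\in\mathcal S$. By the structure theorem, $\mathcal F_0/\mathcal S$ is therefore free. The short exact sequence $0\to\mathcal S\to\mathcal F_0\to\mathcal F_0/\mathcal S\to0$ splits because the quotient is free, so $\mathcal F_0=\mathcal S\oplus\mathcal C$ with $\mathcal C\cong\mathcal F_0/\mathcal S$ free.

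Finally, $\mathcal F=\mathcal S\oplus(\mathcal C\oplus\mathcal F_1)$. Thus $\mathcal S$ is a direct summand of $\mathcal F$, and $\mathcal F/\mathcal S\cong\mathcal C\oplus\mathcal F_1$ is free as a direct sum of free groups.

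The only step needing care is reducing to the finite-rank summand $\mathcal F_0$. Torsion-freeness of a subgroup of an infinite-rank free group does not by itself give freeness without additional argument; Pontryagin-type subtleties are what the paper's $\Q$ example exploits. The choice of $J_0$ avoids this, because $\mathcal F_1$ is free by construction.
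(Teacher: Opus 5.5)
Your argument is correct and is essentially the paper's proof: put $I$ inside the finite-rank summand $\mathcal F_0$ spanned by finitely many basis vectors, note that $\mathcal S\subset\mathcal F_0$ because $n$ kills the complementary component, and use that the finitely generated torsion-free group $\mathcal F_0/\mathcal S$ is free, so $\mathcal F/\mathcal S\cong\mathcal F_0/\mathcal S\oplus\mathcal F_1$. One correction to your closing remark: subgroups of free abelian groups are always free, even in infinite rank; the subtlety your reduction actually avoids is that a torsion-free \emph{quotient} such as $\mathcal F/\mathcal S$ need not be free in infinite rank (for example, $\Q$ is a quotient of a free abelian group).
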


\begin{proof}
Fix a basis of $\mathcal F$. Since $I$ is finitely generated, it lies in the span
$\mathcal F_M$ of finitely many basis elements, and
$\mathcal F=\mathcal F_M\oplus\mathcal F_M'$, where $\mathcal F_M'$ is spanned by
the remaining basis elements. If $nx\in\mathcal F_M$ with $n\ne0$, the
$\mathcal F_M'$-component of $x$ vanishes, so $\mathcal S\subset\mathcal F_M$.
Thus $\mathcal F_M/\mathcal S$ is finitely generated and torsion-free, hence
free. Therefore $\mathcal S$ is a direct summand of $\mathcal F_M$ with finitely
generated (free) complement, so $\mathcal S$ is finitely generated, and it is a
direct summand of $\mathcal F$ with $\mathcal F/\mathcal S\cong\mathcal F_M/\mathcal S\oplus\mathcal F_M'$ free.
\end{proof}

\begin{lemma}\label{l.pre}
Let $H_2(X,\Z)=\mathcal F\oplus\mathcal T$ as above, let $\mathcal F=\mathcal S\oplus\mathcal B$ be a
decomposition of $\mathcal F$, and let $\psi\colon\mathcal S\to\R$ be a
homomorphism. Define $\chi_\psi\colon H_2(X,\Z)\to\R$ by
\[
  \chi_\psi(s+b+t)=\psi(s),\qquad s\in\mathcal S,\ b\in\mathcal B,\ t\in\mathcal T .
\]
Then there is a unique class $c_\psi\in H^2(X,\R)$ with
period homomorphism $\chi_\psi$. Its periods are $\psi$ on $\mathcal S$ and $0$ on
$\mathcal B\oplus\mathcal T$. If $N\ge1$ is an integer with
$N\psi(\mathcal S)\subset\Z$, then $Nc_\psi\in\mathcal H_X$.
\end{lemma}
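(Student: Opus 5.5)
The plan is to reduce everything to Prop.~\ref{p.UCT}(a) and (b). The only work is to check that $\chi_\psi$ is a well-defined homomorphism.

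First, I will verify that $\chi_\psi$ is a homomorphism. The decomposition $H_2(X,\Z)=\mathcal S\oplus\mathcal B\oplus\mathcal T$ is an internal direct sum, since $\mathcal F=\mathcal S\oplus\mathcal B$ and $H_2(X,\Z)=\mathcal F\oplus\mathcal T$. Hence every $x\in H_2(X,\Z)$ has a unique expression $x=s+b+t$. The map $x\mapsto s$ is the composition of $\operatorname{pr}$ with the projection $\mathcal F\to\mathcal S$ along $\mathcal B$, so it is a homomorphism. Therefore $\chi_\psi=\psi\circ(\text{projection to }\mathcal S)$ is a well-defined homomorphism $H_2(X,\Z)\to\R$.

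Next, I will apply Prop.~\ref{p.UCT}(a). Every homomorphism $H_2(X,\Z)\to\R$ is the period homomorphism of a unique class in $H^2(X,\R)$. This gives the existence and uniqueness of $c_\psi$. By construction, $\langle c_\psi,s\rangle=\psi(s)$ for $s\in\mathcal S$, and $\langle c_\psi,b+t\rangle=0$. So the periods are as stated.

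Finally, for integrality, I will use linearity of the period map: the period homomorphism of $Nc_\psi$ is $N\chi_\psi$. For any $x=s+b+t$ we have $N\chi_\psi(x)=N\psi(s)\in N\psi(\mathcal S)\subset\Z$. Thus all periods of $Nc_\psi$ are integers, and Prop.~\ref{p.UCT}(b) gives $Nc_\psi\in\mathcal H_X$.

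There is no real obstacle. The only point needing care is well-definedness of the decomposition, which holds because the sums are direct.
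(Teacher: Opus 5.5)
Your proof is correct and is essentially the paper's argument. The internal direct sum $H_2(X,\Z)=\mathcal S\oplus\mathcal B\oplus\mathcal T$ makes $\chi_\psi$ a well-defined homomorphism, Prop.~\ref{p.UCT}(a) gives existence and uniqueness of $c_\psi$, and Prop.~\ref{p.UCT}(b) applied to the integer-valued $N\chi_\psi$ gives $Nc_\psi\in\mathcal H_X$ — you just spell out the projection step in a bit more detail than the paper does.
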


\begin{proof}
Since $H_2(X,\Z)=\mathcal S\oplus\mathcal B\oplus\mathcal T$, the map $\chi_\psi$ is a
well-defined homomorphism, and the class $c_\psi$ exists and is unique by
Prop.~\ref{p.UCT}(a). If $N\psi(\mathcal S)\subset\Z$, then $N\chi_\psi$ takes integer
values, so $Nc_\psi\in\mathcal H_X$ by Prop.~\ref{p.UCT}(b).
\end{proof}

\subsection{The Levi form}\label{ssec:levi}

For a real $C^2$ function $f$ on an open subset of $\R^m$, we write
$\operatorname{Hess}_xf$ for its real Hessian at $x$, that is, the symmetric
bilinear form $\operatorname{Hess}_xf(v,w)=\sum_{j,k}\frac{\partial^2f}{\partial x_j\partial x_k}(x)\,v_jw_k$,
and $\|\operatorname{Hess}_xf\|$ for its operator norm. In general
$\operatorname{Hess}_xf$ depends on the coordinates. However, if $f$ is a real
$C^2$ function on a manifold $X$ and $df(q)=0$, then $\operatorname{Hess}_qf$
is independent of the chart, and we regard it as a quadratic form on $T_qX$.

The rest of this subsection is local, so we work in $\C^n$ with coordinates
$z=x+iy$, $x,y\in\R^n$, and regard $\C^n$ as $\R^{2n}$. We identify
$\xi=a+ib\in\C^n$ with the real vector $v=(a,b)\in\R^{2n}$, so that
$Jv=(-b,a)$ corresponds to $i\xi$. The Levi form of a real $C^2$ function $u$
at $z$ is
\[
  L_u(z;\xi)=\sum_{j,k=1}^n\frac{\partial^2u}{\partial z_j\partial\bar z_k}(z)\,\xi_j\bar\xi_k .
\]
It is related to the real Hessian and to $dd^c$ by
\begin{equation}\label{eq:levi}
  L_u(z;\xi)=\tfrac14\bigl(\operatorname{Hess}_zu(v,v)+\operatorname{Hess}_zu(Jv,Jv)\bigr)=dd^cu(v,Jv) ,
\end{equation}
where  $d^c=\frac{i}{4}(\bar\partial-\partial)$, so that $dd^c=d\circ d^c = \frac{i}{2} \partial \bar{\partial}$.
Indeed, put $g(\lambda)=u(z+\lambda\xi)$ for $\lambda=t+is\in\C$ near $0$.
By the chain rule $\partial_\lambda\partial_{\bar\lambda}g(0)=L_u(z;\xi)$. On
the other hand $\partial_\lambda\partial_{\bar\lambda}=\frac14(\partial_t^2+\partial_s^2)$,
and $\partial_t^2g(0)=\operatorname{Hess}_zu(v,v)$,
$\partial_s^2g(0)=\operatorname{Hess}_zu(Jv,Jv)$, since $\partial_t$ and
$\partial_s$ differentiate in the directions $v$ and $Jv$. This gives the
first equality. For the second, we have
$dd^cu=\frac i2\sum_{j,k}\frac{\partial^2u}{\partial z_j\partial\bar z_k}\,dz_j\wedge d\bar z_k$,
and $dz_j\wedge d\bar z_k(v,Jv)=\xi_j\cdot\overline{i\xi_k}-i\xi_j\cdot\bar\xi_k=-2i\,\xi_j\bar\xi_k$.
In particular, $u$ is strictly plurisubharmonic if and only if
$L_u(z;\xi)>0$ for all $z$ and all $\xi\ne0$.

For example, if $u(x+iy)=F(x)+\sigma|y|^2$ with $F\in C^2(\R^n)$ and
$\sigma\in\R$, then $\operatorname{Hess}_zu$ is block diagonal, with blocks
$\operatorname{Hess}_xF$ and $2\sigma\,\mathrm{Id}$, and~\eqref{eq:levi} gives
\begin{equation}\label{eq:levi-example}
  L_u(z;\xi)=\tfrac14\Bigl(\operatorname{Hess}_xF(a,a)+\operatorname{Hess}_xF(b,b)+2\sigma|\xi|^2\Bigr)
  \ \ge\ \tfrac14\bigl(2\sigma-\|\operatorname{Hess}_xF\|\bigr)|\xi|^2 .
\end{equation}

Finally, we record two product formulas, which hold on any complex manifold.
For real $C^2$ functions $f$ and $g$, we have $d^c(fg)=f\,d^cg+g\,d^cf$, and
applying $d$ gives
\begin{equation}\label{eq:ddc-product}
  dd^c(fg)=f\,dd^cg+g\,dd^cf+df\wedge d^cg+dg\wedge d^cf .
\end{equation}
For $f=g=v$, since
$dv\wedge d^cv=\frac i4(\partial v+\bar\partial v)\wedge(\bar\partial v-\partial v)
=\frac i2\,\partial v\wedge\bar\partial v$, this becomes
\begin{equation}\label{eq:ddc-square}
  dd^c(v^2)=i\,\partial v\wedge\bar\partial v+2v\,dd^cv .
\end{equation}

\section{Preliminary Results on Kähler Forms}

In this section, we prove some preliminary results concerning K\"ahler forms and their cohomology classes.
Lemmas~\ref{l.1} and~\ref{l.3} are well-known, but proofs are hard to find. We provide them below for the sake of
completeness.

\begin{lemma}\label{l.1}
Let $X$ be a Stein manifold.
\begin{enumerate}
\item[(a)] Every class in $H^2(X,\mathbb R)$ is represented by a smooth,
  closed, real $(1,1)$-form on $X$.
\item[(b)] Let $D\subset X$ be a compact $\mathcal O(X)$-convex set and let $w$
  be a smooth, closed, real $(1,1)$-form on $X$ which is positive definite at
  every point of $D$. Then there is a K\"ahler form $\omega$ on $X$ such that
  $[\omega]=[w]$ in $H^2(X,\mathbb R)$ and $\omega=w$ on $D$.
\end{enumerate}
In particular, taking $D=\emptyset$ in (b), every class in $H^2(X,\mathbb R)$
is represented by a K\"ahler form.
\end{lemma}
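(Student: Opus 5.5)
For (a) I would use the Dolbeault-type sheaf resolutions on a Stein manifold. Let $c\in H^2(X,\R)$ be represented by a closed real $2$-form $\beta=\beta^{2,0}+\beta^{1,1}+\beta^{0,2}$, with $\beta^{0,2}=\overline{\beta^{2,0}}$. Since $d\beta=0$, the $(0,3)$-component gives $\bar\partial\beta^{0,2}=0$. Because $H^{0,2}(X)=H^2(X,\mathcal O)=0$ by Cartan's Theorem B, there is a smooth $(0,1)$-form $\gamma$ with $\bar\partial\gamma=\beta^{0,2}$. Put $\eta=\gamma+\bar\gamma$, which is real. Then $\beta-d\eta$ has $(0,2)$-part $\beta^{0,2}-\bar\partial\gamma=0$, and by reality its $(2,0)$-part also vanishes. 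Hence $w:=\beta-d\eta$ is a closed real $(1,1)$-form cohomologous to $\beta$.

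For (b), the plan is to add $dd^c$ of a suitable strictly plurisubharmonic function that vanishes on a neighbourhood of $D$. Since $D$ is compact and $\mathcal O(X)$-convex and $X$ is Stein, there is a smooth exhaustion function $\rho\ge0$ on $X$ that is strictly plurisubharmonic, vanishes on a neighbourhood $V$ of $D$, and is positive outside $V$. This is standard, and is obtained as follows.
\begin{enumerate}
\item[1.] By positivity of $w$ on $D$ and continuity, $w>0$ on an open neighbourhood $W$ of $D$.
\item[2.] Choose a smooth strictly psh exhaustion $\varphi$ of $X$ with $\varphi<0$ on $D$ and $\varphi>0$ off a compact subset of $W$. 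This uses the $\mathcal O(X)$-convexity of $D$, via finitely many holomorphic functions $|f_j|<1$ on $D$ separating points outside $W$, combined with a global exhaustion.
\item[3.] Choose a convex increasing function $\chi$ with $\chi=0$ on $(-\infty,0]$, so that $\chi\circ\varphi$ is psh, vanishes near $D$, and is strictly psh where $\varphi>0$ whenever $\chi'>0$ there.
\end{enumerate}

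I then set $\omega=w+dd^c\bigl(\chi\circ\varphi\bigr)$, with $\chi$ growing fast enough. This $\omega$ is cohomologous to $w$ and equals $w$ near $D$. On the compact transition region $\{\varphi\le 0\}\cap\overline W$ we have $w>0$ and $dd^c(\chi\circ\varphi)\ge0$, so $\omega>0$ there.

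The main obstacle is positivity on the noncompact part $\{\varphi>0\}$, where $w$ may be very negative. There $dd^c(\chi\circ\varphi)\ge\chi'(\varphi)\,dd^c\varphi$, and I would choose $\chi$ as follows. Exhaust $X$ by the compact sets $\{\varphi\le k\}$. On each region $\{k\le\varphi\le k+1\}$ let $m_k$ be the least constant with $w\ge -m_k\,dd^c\varphi$ there; this is finite by compactness and strict plurisubharmonicity. Choose $\chi$ convex with $\chi'>m_k$ on $[k,k+1]$ for $k\ge1$. Near the level $\varphi=0$ I would arrange $\{0\le\varphi\le 1\}\subset W$, so that $w>0$ there and only $\chi'\ge0$ is needed; any smooth convex $\chi$ vanishing on $(-\infty,0]$ with $\chi'$ dominating the $m_k$ for $k\ge 1$ then works. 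This requires step 2 to produce $\varphi$ with $\{\varphi\le1\}\subset W$ and $D\subset\{\varphi<0\}$, which holds after rescaling, since $\{\varphi\le 0\}$ can be taken inside $W$ with a margin.

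Finally, taking $D=\emptyset$ and combining with (a), every class in $H^2(X,\R)$ is represented by a Kähler form.
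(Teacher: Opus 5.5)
Your proposal is correct and follows the paper's proof essentially step for step. In (a) you use the same removal of the $(0,2)$-part via $H^{0,2}(X)=0$ and the real correction $d(\gamma+\bar\gamma)$. In (b) you use the same Hörmander-type strictly plurisubharmonic exhaustion $\varphi$: negative on $D$, positive off a neighbourhood where $w>0$, and rescaled so that $\{\varphi\le 1\}$ lies in that neighbourhood. You then set $\omega=w+dd^c(\chi\circ\varphi)$ with $\chi$ convex, zero on $(-\infty,0]$, and $\chi'$ dominating the constants $m_k$ on $[k,k+1]$.

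The one slip is your opening description of the auxiliary function as strictly plurisubharmonic while vanishing on an open set, which is impossible. What you actually construct and use, $\chi\circ\varphi$, is only plurisubharmonic, and that is all the argument needs.
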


\begin{proof}
(a) Let $u$ be a smooth, closed, real $2$-form representing the given class,
and decompose it into types, $u=u^{2,0}+u^{1,1}+u^{0,2}$. The
$(0,3)$-component of $du=0$ is $\bar\partial u^{0,2}$, so $u^{0,2}$ is
$\bar\partial$-closed. Since $X$ is Stein, $H^{0,2}(X)=0$, so there is a
$(0,1)$-form $p$ with $\bar\partial p=u^{0,2}$. Put $\gamma=p+\bar p$ and
$w=u-d\gamma$. The $(0,2)$-component of $d\gamma$ is $\bar\partial p=u^{0,2}$,
so $w^{0,2}=0$. Since $w$ is real, $w^{2,0}=\overline{w^{0,2}}=0$. Thus $w$ is
a smooth, closed, real $(1,1)$-form cohomologous to $u$.

(b) \emph{Step 1: an exhaustion function.} Since $w$ is positive definite at
the points of the compact set $D$, it is positive definite on an open
neighbourhood $O$ of $D$. As $D$ is $\mathcal O(X)$-convex, there is a smooth
strictly plurisubharmonic exhaustion function $\Psi$ on $X$ with $\Psi<0$ on
$D$ and $\Psi>0$ on $X\setminus O$ \cite[Theorem~5.1.6]{Hormander}. (If
$D=\emptyset$, take $O=\emptyset$ and any strictly plurisubharmonic exhaustion
function, to which we add a constant so that $\Psi>0$.) The compact sets
$\{\Psi\le1/\lambda\}$ decrease, as $\lambda\to\infty$, to $\{\Psi\le0\}\subset O$,
so after replacing $\Psi$ by $\lambda\Psi$ with $\lambda>0$ large, we may
assume that
\[
  \{\Psi\le1\}\subset O .
\]

\emph{Step 2: a convex function $G$.} For $j\ge1$ put
\[
  \nu_j:=\max\Bigl\{\frac{-w(v,Jv)}{dd^c\Psi(v,Jv)}:\ x\in\{\Psi\le j+1\},\
  0\ne v\in T_xX\Bigr\}.
\]
This is finite, because $\{\Psi\le j+1\}$ is compact and $dd^c\Psi(v,Jv)>0$ for
$v\ne0$. By definition, for every $x\in\{\Psi\le j+1\}$ and every $s>\nu_j$ the
form $w+s\,dd^c\Psi$ is positive definite at $x$.
Choose a nondecreasing sequence $s_j>\max\{\nu_j,0\}$, and let $G$ be the
piecewise linear function with slope $0$ on $(-\infty,0]$, slope $s_1$ on
$[0,2]$, and slope $s_j$ on $[j,j+1]$ for $j\ge2$. Its slopes are nonnegative
and nondecreasing. Smooth $G'$ to a nondecreasing smooth function by
rounding the corner at $0$ in a small interval to its right, and each corner
at $j\ge2$ in a small interval to its left. We obtain a smooth convex function
$G\colon\mathbb R\to\mathbb R$ with
\[
  G=0\ \text{on}\ (-\infty,0],\qquad G'\ge0,\qquad
  G'>\nu_j\ \text{on}\ [j,j+1]\ \ (j\ge1).
\]

\emph{Step 3: the Kähler form.} Put $\omega=w+dd^c(G\circ\Psi)$. This is a
smooth, closed, real $(1,1)$-form, and
\[
  dd^c(G\circ\Psi)=G'(\Psi)\,dd^c\Psi+G''(\Psi)\,d\Psi\wedge d^c\Psi
  \ \ge\ G'(\Psi)\,dd^c\Psi\ \ge\ 0 ,
\]
since $G'\ge0$ and $G''\ge0$. We check that $\omega$ is positive definite
at every $x\in X$: if $\Psi(x)\le1$, then $x\in O$, where $w>0$, so $\omega\ge w>0$ at $x$;
if $\Psi(x)\in[j,j+1]$ for some $j\ge1$, then $x\in\{\Psi\le j+1\}$ and
  $G'(\Psi(x))>\nu_j$, so at $x$
  \[
    \omega\ \ge\ w+G'(\Psi(x))\,dd^c\Psi\ >\ 0 .
  \]

Hence, $\omega$ is a Kähler form, which is clearly cohomologous to $w$. Finally,
$G\circ\Psi=0$ on the open set $\{\Psi<0\}\supset D$, so $\omega=w$ there, in
particular on $D$.
\end{proof}

\begin{lemma}[$dd^c$-Lemma]\label{l.3}
Let $X$ be a Stein manifold.
\begin{enumerate}
    \item[(a)] Let $\eta$ be a smooth, real $(1,1)$-form on $X$. If $\eta$ is $d$-exact, then there exists a smooth function 
    $h \in C^\infty(X)$ such that $dd^c h = \eta$.
    
\item[(b)] Fix a Hermitian metric on $X$ and let $n=\dim_{\mathbb C}X$. Let
$U\Subset X$ be a strictly pseudoconvex domain with smooth boundary. If $\eta$
is a smooth, real $(1,1)$-form on $X$ which is $d$-exact on a neighbourhood of 
$\overline U$, then there
exists $h\in C^\infty(\overline U)$ such that $dd^c h=\eta$ on $U$ and
\begin{equation}\label{eq:ddc-estimate}
  \|h\|_{C^2(U)} \le C\,\|\eta\|_{C^{n+2}(U)},
\end{equation}
where $C$ depends on $U$ and the metric but not on $\eta$.
\end{enumerate}
\end{lemma}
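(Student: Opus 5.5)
For part (a) I would reduce the problem to the vanishing of Dolbeault cohomology on a Stein manifold. Since $\eta$ is $d$-exact, write $\eta=d\alpha$ with $\alpha$ a smooth real $1$-form, and decompose $\alpha=\alpha^{1,0}+\alpha^{0,1}$ with $\alpha^{1,0}=\overline{\alpha^{0,1}}$. Because $\eta$ has type $(1,1)$, the $(0,2)$-component of $d\alpha$ vanishes, that is, $\bar\partial\alpha^{0,1}=0$. Stein means $H^{0,1}(X)=0$, so there is a smooth function $f$ with $\bar\partial f=\alpha^{0,1}$. Reality of $\alpha$ then gives $\alpha^{1,0}=\partial\bar f$, so
\[
\alpha=\partial\bar f+\bar\partial f .
\]
Next I would set $u=\operatorname{Im} f$, up to normalizing constants, so that $\alpha=d(\operatorname{Re}f)+c\,d^c u$ for a suitable constant $c$. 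This uses $d^c=\frac i4(\bar\partial-\partial)$: one has $\partial\bar f+\bar\partial f=d(2\operatorname{Re} f)$ plus a multiple of $i(\bar\partial-\partial)\operatorname{Im} f$. Applying $d$ gives $\eta=d\alpha=c\,dd^c u$, and $h=cu$ does the job.

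For part (b) I would run the same argument on $\overline U$ and add estimates. On a neighbourhood $V$ of $\overline U$ we may take $V$ strictly pseudoconvex as well, shrinking it if needed. There I would solve $d\alpha=\eta$ with the \emph{linear} homotopy operator $\alpha=P\eta$: cover $\overline U$ by finitely many contractible charts, apply Poincaré-lemma homotopy operators, and patch them with a partition of unity through a Čech–de Rham double complex. The exactness hypothesis ensures that the obstruction class vanishes, and the patching is linear in $\eta$.

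This construction loses derivatives only in a controlled way, giving $\|\alpha\|_{C^{k}}\le C\|\eta\|_{C^{k}}$ on a slightly smaller domain. To avoid shrinking, I would do the construction on $V\supset\overline U$ and use norms on $V$. It is cleaner, however, to use Henkin–Ramírez/Kerzman integral kernels directly on $U$.

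Then I would solve $\bar\partial f=\alpha^{0,1}$ on $U$ using a bounded solution operator. Such operators exist for smooth strictly pseudoconvex domains, for example Kohn's canonical solution, which satisfies $\|f\|_{C^{k}(\overline U)}\le C\|\alpha^{0,1}\|_{C^{k+s}(\overline U)}$, or Hölder-gain operators from integral formulas. With $k=2$, and allowing a loss of about $n$ derivatives in the crude Sobolev-embedding version, this gives the estimate \eqref{eq:ddc-estimate} for $h=c\operatorname{Im} f$. The derivative count $n+2$ suggests passing through $L^2$-Sobolev estimates for the $\bar\partial$-Neumann problem followed by Sobolev embedding.

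The main obstacle is exactly this: obtaining a $\bar\partial$ solution operator on $U$ with $C^2$ bounds up to the boundary, together with a linear, bounded primitive for $\eta$ that depends only on $U$. I would cite Kohn's $\bar\partial$-Neumann regularity on strictly pseudoconvex domains, $N:W^s\to W^{s+1}$, combined with the Sobolev embedding $W^{n+2}\hookrightarrow C^2$ in real dimension $2n$. This is where the $n+2$ would come from.
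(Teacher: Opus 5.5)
Your part (a) is the paper's argument. There is a harmless constant slip: in fact $\partial\bar f+\bar\partial f=d(\operatorname{Re}f)+4\,d^c(\operatorname{Im}f)$, so $h=4\operatorname{Im}f$. The overall plan of (b) also matches the paper: a primitive with bounds, the canonical $\bar\partial$-solution on the strictly pseudoconvex $U$, then Sobolev embedding. But (b) as you set it up does not give the estimate \eqref{eq:ddc-estimate}, for two concrete reasons.

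\textbf{The domain of the norm.} You build $\alpha$ with Poincar\'e homotopy operators on a cover of a neighbourhood $V\supset\overline U$ and measure $\eta$ on $V$. What you obtain is $\|h\|_{C^2(U)}\le C\|\eta\|_{C^{n+2}(V)}$. This is weaker than the claim, whose right-hand side is a norm on $U$ only. Henkin--Ram\'{\i}rez or Kerzman kernels do not rescue this, since they invert $\bar\partial$, not $d$. You need a primitive on the compact manifold with boundary $\overline U$ itself, with bounds in norms on $U$. The paper gets this from Hodge theory on manifolds with boundary, \cite[Lemma~3.2.1]{Sch}, which gives $\|\xi\|_{W^s(U)}\le C\|\eta\|_{W^{s-1}(U)}$. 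Alternatively, your \v{C}ech--de Rham patching could be run on a finite good cover of $\overline U$ by sets that are convex in boundary-adapted charts.

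\textbf{The derivative count.} In real dimension $2n$ one has $W^s\hookrightarrow C^2$ only for $s>n+2$. The case $s=n+2$ is borderline, and the embedding $W^{n+2}\hookrightarrow C^2$ you invoke is false. With the ingredients you actually list, $N\colon W^s\to W^{s+1}$ gives only $\|\bar\partial^*N\phi\|_{s}\le C\|\phi\|_s$. Your primitive gains nothing either ($C^k\to C^k$). So the honest outcome is a bound by $\|\eta\|_{C^{n+3}}$, or at best a H\"older norm $C^{n+2,\delta}$, not by $\|\eta\|_{C^{n+2}}$.

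To reach $n+2$ you must use a gain somewhere. One option is the half-derivative estimate $\|\bar\partial^*N\phi\|_{s+1/2}\le C\|\phi\|_s$ for the canonical solution on strictly pseudoconvex domains, which the paper quotes from \cite[Corollary~5.3.11]{CS}. With $s=n+2$ it gives $f\in W^{n+5/2}\hookrightarrow C^2$. The paper also uses the one-derivative gain of the Hodge primitive and takes $s=n+3$.

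For the application in Step~2 of Case~1 neither loss would matter. Lemma~\ref{l.ra} allows any $k$, and the norms there are controlled on $D\supset V'$. But as a proof of the lemma as stated, these are genuine gaps.
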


\begin{proof}
(a) Since $\eta$ is  $d$-exact, there exists a smooth real 1-form $\xi=\xi^{1,0} + \xi^{0,1}$ such that $d\xi = \eta$. The $(0,2)$-component of $d\xi$ is $\bar{\partial} \xi^{0,1}$. Since $\eta$ is of type $(1,1)$, we have $\bar{\partial} \xi^{0,1} = 0$. Since $X$ is Stein, $H^{0,1}(X) = 0$, so there exists a smooth function $f$ such that $\bar{\partial} f = \xi^{0,1}$. Then, using that $\eta$ is real, 
$$\eta = d\xi = \partial \xi^{0,1} + \bar{\partial} \xi^{1,0} = \partial(\bar{\partial} f) + \bar{\partial}(\partial \bar{f}) = \partial \bar{\partial} (f - \bar{f}).$$
Let $h = 4\text{Im}(f)$. Then $f - \bar{f} = 2i \text{Im}(f) = \frac{i}{2} h$. Thus, $\eta = \partial \bar{\partial} (\frac{i}{2} h) = \frac{i}{2} \partial \bar{\partial} h = dd^c h$.

\medskip
(b) Write $\|\cdot\|_s$ for the Sobolev $W^s$-norm on $U$ with respect to the
fixed metric. By~\cite[Lemma~3.2.1]{Sch},
applied on the compact manifold with boundary $\overline U$, $\eta$ admits a
primitive $\xi\in C^\infty(\overline U)$, $d\xi=\eta$, with
\[
  \|\xi\|_{s}\le C_1\|\eta\|_{s-1}\qquad (s\ge 1).
\]
Replacing $\xi$ by its real part, we may assume $\xi$ is real. As in (a),
$\bar\partial\xi^{0,1}=0$.

Since $U$ is strictly pseudoconvex, by H\"ormander's $L^2$-estimates every
$\bar\partial$-closed $(0,1)$-form in $L^2(U)$ is $\bar\partial$-exact in
$L^2(U)$. Hence, there are no nonzero $\bar\partial$-harmonic $(0,1)$-forms
on $U$, and in particular the harmonic projection $H_{(0,1)}\xi^{0,1}$
vanishes. By \cite[Corollary~5.3.11]{CS}, there exists
$f\in C^\infty(\overline U)$ with $\bar\partial f=\xi^{0,1}$ and
\[
  \|f\|_{s+\frac12}\le C_2\|\xi^{0,1}\|_{s}.
\]
As in (a), $h:=4\operatorname{Im}(f)\in C^\infty(\overline U)$ is real and
satisfies $dd^c h=\eta$ on $U$.

Now take $s=n+3$. Since $U$ has real dimension $2n$ and smooth boundary, the
Sobolev embedding $W^{s}(U)\hookrightarrow C^2(\overline U)$ holds for
$s>2+n$. Therefore,
\[
  \|h\|_{C^2(U)}\le 4\|f\|_{C^2(U)}\le C_3\|f\|_{s}\le C_3C_2\|\xi^{0,1}\|_{s}
  \le C_4\|\xi\|_{s}\le C_5\|\eta\|_{s-1}\le C\,\|\eta\|_{C^{n+2}(U)},
\]
where the last step uses that $U$ has finite volume.
\end{proof}

\begin{remark}\label{rem:exact}
By Lemma~\ref{l.3}(a), an exact K\"ahler form $\omega$ on a Stein manifold $X$
is of the form $\omega=dd^c\varphi$ with $\varphi\in C^\infty(X)$; conversely,
$dd^c\varphi=d(d^c\varphi)$ is exact. Thus, for K\"ahler forms on Stein
manifolds, ``exact'' and ``$dd^c$-exact'' mean the same thing.
\end{remark}

Next, we prove the following.

\begin{lemma}\label{l:loc}
Let $X$ be a Stein manifold, let $U\subset X$ be a Stein open subset, and let
$K\subset U$ be compact. Let $\varphi\colon H_2(X,\mathbb Z)\to\mathbb R$ be a
homomorphism which vanishes on the image of the map
$H_2(U,\mathbb Z)\to H_2(X,\mathbb Z)$ induced by inclusion. Then there exists
a smooth, closed, real $(1,1)$-form $\beta$ on $X$ such that
\begin{enumerate}
  \item $\int_S\beta=\varphi(S)$ for every $S\in H_2(X,\mathbb Z)$;
  \item $\beta=0$ on a neighbourhood of $K$.
\end{enumerate}
\end{lemma}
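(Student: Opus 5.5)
The plan is to build $\beta$ in two stages: first a closed $2$-form with the prescribed periods that vanishes near $K$, obtained from a relative de Rham argument, and then a correction to type $(1,1)$ by $\bar\partial$-cohomology that keeps the vanishing near $K$.

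\emph{Step 1: a closed form with the right periods, zero near $K$.} Choose a compact $\mathcal O(U)$-convex set $D\subset U$ with $K\subset\operatorname{int}D$; for instance a sublevel set of a strictly plurisubharmonic exhaustion of the Stein manifold $U$. Let $V$ be a Stein neighbourhood of $D$ in $U$, say a slightly larger sublevel set. The image of $H_2(V,\Z)$ in $H_2(X,\Z)$ factors through $H_2(U,\Z)$, so $\varphi$ vanishes on it. By Prop.~\ref{p.UCT}(a) there is a closed real $2$-form $u$ on $X$ with period homomorphism $\varphi$. Its restriction $u|_V$ pairs to zero with $H_2(V,\Z)$, so by Prop.~\ref{p.UCT}(a) applied to $V$ it is exact: $u=d\gamma$ on $V$.

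Take a cutoff $\chi$ with $\chi=1$ near $D$ and $\operatorname{supp}\chi\subset V$. Then $u_1=u-d(\chi\gamma)$ is a closed form on $X$, cohomologous to $u$ (so it still has periods $\varphi$), and $u_1=0$ near $D$.

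\emph{Step 2: correcting the type while keeping vanishing near $K$.} Following Lemma~\ref{l.1}(a), write $u_1^{0,2}$, which is $\bar\partial$-closed on $X$ and vanishes near $D$. We need $p$ with $\bar\partial p=u_1^{0,2}$ and with $p$ controlled near $K$.

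Solve $\bar\partial p=u_1^{0,2}$ on $X$ using $H^{0,2}(X)=0$. On a neighbourhood $W\Subset\operatorname{int}D$ of $K$, which we take Stein, $p$ is $\bar\partial$-closed. Since $H^{0,1}(W)=0$, there is a function $g$ on $W$ with $p=\bar\partial g$ there. Cut off: let $\tilde g=\rho g$ with $\rho=1$ near $K$ and $\operatorname{supp}\rho\subset W$, and put $p'=p-\bar\partial\tilde g$. Then $\bar\partial p'=u_1^{0,2}$ still holds and $p'=0$ near $K$.

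Now set $\gamma'=p'+\bar p'$ and $\beta=u_1-d\gamma'$. As in Lemma~\ref{l.1}(a), $\beta$ is a closed real $(1,1)$-form cohomologous to $u_1$, so $\int_S\beta=\varphi(S)$. Near $K$ we have $u_1=0$ and $\gamma'=0$, hence $\beta=0$.

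\emph{The main subtlety} is making the local $\bar\partial$-primitive vanish near $K$. This is handled by working on a Stein neighbourhood $W$ of $K$ on which $u_1^{0,2}=0$, so that $p$ is $\bar\partial$-closed there and is $\bar\partial$-exact on $W$. Such a $W$ exists inside $\operatorname{int}D$, for example a sublevel set of a strictly plurisubharmonic exhaustion of $U$ containing $K$, since $K\subset\operatorname{int}D$. The first step, in turn, uses Stein neighbourhoods $V\subset U$ only so that the exactness argument of Prop.~\ref{p.UCT}(a) applies; it in fact holds for any open $V\subset U$.
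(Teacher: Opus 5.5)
Your proof is correct, but it is organised differently from the paper's.

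\textbf{The paper's route.} It first chooses a $(1,1)$-representative $\alpha$ of the class with periods $\varphi$, via Lemma~\ref{l.1}(a), which uses $H^{0,2}(X)=0$. It then notes that $\alpha|_U$ is exact by Prop.~\ref{p.UCT}(a). The $dd^c$-lemma, Lemma~\ref{l.3}(a), applied on the Stein set $U$, gives $\alpha=dd^cf$ there. Finally it sets $\beta=\alpha-dd^c(\rho f)$ with a single cutoff $\rho$. Since $dd^c(\rho f)$ is automatically a real $(1,1)$-form, the type is never disturbed, and one cutoff suffices.

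\textbf{Your route.} You reverse the order: you first make the form vanish near $D$ by cutting off a $d$-primitive, and only afterwards correct the type. The type correction $d(p+\bar p)$ could destroy the vanishing near $K$. That forces your second cutoff, of the $\bar\partial$-primitive $p$, and with it the extra care of taking $D$ to be a sublevel set of an exhaustion of $U$. This is what guarantees a Stein $W$ with $K\subset W\subset\{u_1=0\}$, on which $p$ is $\bar\partial$-closed and hence $\bar\partial$-exact.

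\textbf{Comparison.} In effect you unpack the $dd^c$-lemma into its two ingredients, $d$-exactness near $K$ and $H^{0,1}(W)=0$, and use them in separate steps. The cohomological inputs are the same in both arguments:
\begin{itemize}
\item $H^{0,2}(X)=0$;
\item Prop.~\ref{p.UCT}(a) on $X$ and on a neighbourhood of $K$ in $U$;
\item vanishing of $H^{0,1}$ on a Stein neighbourhood of $K$ inside $U$.
\end{itemize}
This last point is also the only place where either argument uses that $U$ is Stein, as you correctly observe. The paper's ordering is shorter and makes the correction term manifestly $dd^c$-exact. Yours makes explicit why the vanishing near $K$ survives the type correction.
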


\begin{proof}
By Prop.~\ref{p.UCT}(a) there is a class in $H^2(X,\mathbb R)$ whose
periods are given by $\varphi$. By Lemma~\ref{l.1}(a), it is represented
by a smooth, closed, real $(1,1)$-form $\alpha$ on $X$. Every integral $2$-cycle $S'$ in $U$ satisfies
$\int_{S'}\alpha=\varphi([S'])=0$, since $[S']$ lies in the image of
$H_2(U,\mathbb Z)$. Therefore, $\alpha|_U$ is exact, by
Prop.~\ref{p.UCT}(a) applied to $U$. Since $U$ is Stein,
Lemma~\ref{l.3}(a) gives $f\in C^\infty(U)$ with $dd^cf=\alpha$ on $U$.

Choose a neighbourhood $W$ of $K$ and a smooth function $\rho\colon X\to[0,1]$
with $\rho\equiv1$ on $W$ and $\operatorname{supp}\rho\Subset U$. Then
$\rho f$, extended by zero, is smooth on $X$, and
\[
  \beta=\alpha-dd^c(\rho f)
\]
is a smooth, closed, real $(1,1)$-form on $X$. Since $dd^c(\rho f)$ is exact,
$\beta$ has the same periods as $\alpha$, which proves (1). On $W$ we have
$\beta=\alpha-dd^cf=0$, which proves (2).
\end{proof}

Finally, we need the following approximation result.

\begin{lemma}[Rational approximation]\label{l.ra}
Let $X$ be a Stein manifold such that $H_2(X,\mathbb Z)$ is finitely
generated. Fix a Hermitian metric on $X$ and a K\"ahler form $\omega$ on $X$.
Let $\Sigma\subset H_2(X,\mathbb Z)$ be a set 
such that $\int_S\omega=0$ for all $S\in\Sigma$. Then for every
neighbourhood $\mathcal U$ of $[\omega]$ in the finite-dimensional space
$H^2(X,\mathbb R)$, every integer $k\ge 0$, every $\epsilon>0$ and every
compact set $D\subset X$ there exists a smooth, closed, real $(1,1)$-form
$\tilde\omega_D$ on $X$ such that
\begin{enumerate}
  \item[(a)] $[\tilde\omega_D]\in H^2(X,\mathbb Q)\cap\mathcal U$ and
        $\int_S\tilde\omega_D=0$ for all $S\in\Sigma$;
  \item[(b)] $\tilde\omega_D$ is positive definite at every point of $D$ and
        $\|\tilde\omega_D-\omega\|_{C^k(D)}<\epsilon$.
\end{enumerate}
\end{lemma}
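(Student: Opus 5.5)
We will work entirely at the level of periods and then realize the approximating class by a form close to $\omega$ on $D$. Since $H_2(X,\Z)$ is finitely generated, write $H_2(X,\Z)=\mathcal F\oplus\mathcal T$ with $\mathcal F$ free of rank $b$. Choose integral cycles $C_1,\dots,C_b$ whose classes form a basis of $\mathcal F$. By Corollary~\ref{c.F}, the period map identifies $H^2(X,\R)$ with $\R^b$ and the rational classes with $\Q^b$.

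The first step is to handle the constraint from $\Sigma$. Each $S\in\Sigma$ has period $\langle c,S\rangle=\langle c,\operatorname{pr}(S)\rangle$, which is a linear functional on $\R^b$ with \emph{integer} coefficients, namely the coordinates of $\operatorname{pr}(S)$ in the basis $[C_i]$. Hence
\[
V=\{c\in H^2(X,\R):\ \langle c,S\rangle=0\ \forall S\in\Sigma\}
\]
is the solution space of a system of linear equations with rational coefficients. Only finitely many of these equations are independent, since they live in $\Q^b$. Therefore $V$ is a rational subspace: $V\cap\Q^b$ is dense in $V$, as one sees from a basis of $V$ obtained by Gaussian elimination over $\Q$. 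By hypothesis $[\omega]\in V$.

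The second step is the approximation itself. Choose a basis $c_1,\dots,c_m$ of $V$ consisting of rational classes, and represent each $c_j$ by a smooth closed real $(1,1)$-form $\beta_j$ on $X$, using Lemma~\ref{l.1}(a). Write $[\omega]=\sum t_jc_j$ with $t_j\in\R$, pick rationals $q_j$ close to $t_j$, and set
\[
\tilde\omega_D=\omega+\sum_j (q_j-t_j)\beta_j .
\]
This is a smooth closed real $(1,1)$-form, and its class $\sum q_jc_j$ lies in $V\cap H^2(X,\Q)$. Given $\mathcal U$, choosing $|q_j-t_j|$ small places this class in $\mathcal U$, which gives (a).

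For (b), note that $\|\tilde\omega_D-\omega\|_{C^k(D)}\le\sum|q_j-t_j|\,\|\beta_j\|_{C^k(D)}$. The $\beta_j$ are fixed smooth forms and $D$ is compact, so each $\|\beta_j\|_{C^k(D)}$ is finite and the right-hand side is $<\epsilon$ once the $q_j$ are close enough. Positivity on $D$ follows in the same way. Since $\omega$ is Kähler, its minimum eigenvalue relative to the metric is bounded below by some $\delta>0$ on the compact set $D$. A $C^0$-perturbation smaller than $\delta$, guaranteed by shrinking $\epsilon$ if necessary, keeps the form positive definite there.

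The only point needing care is the first step: the rationality of $V$ and the fact that the possibly infinite set $\Sigma$ imposes only finitely many conditions. The argument above deals with both, since $\Sigma$ acts through finitely generated $\mathcal F$ and torsion classes impose no condition. Everything else is routine.
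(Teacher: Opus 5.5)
Your proposal is correct and follows essentially the same route as the paper. You identify $H^2(X,\R)$ with $\R^b$ via periods on a basis of $\mathcal F$, observe that the annihilator of $\Sigma$ is cut out by integer linear equations and so has dense rational points, and perturb $\omega$ by a small combination of fixed closed real $(1,1)$-forms, with the $C^k(D)$ and positivity estimates coming from compactness of $D$. The only cosmetic difference is that you perturb along forms representing a rational basis of that annihilator, whereas the paper perturbs along forms $e_j$ dual to the cycles $C_j$ and chooses the rational target point $q$ in the annihilator directly.
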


\begin{proof}
Since $H_2(X,\Z)$ is finitely generated, we may write
$H_2(X,\Z)=\mathcal F\oplus\mathcal T$ as in Sec.~\ref{ssec:decomp}, with
$\mathcal F\cong\Z^b$. Let $C_1,\dots,C_b$ be integral $2$-cycles whose
classes form a basis of $\mathcal F$. By Prop.~\ref{p.UCT}(a) and
Lemma~\ref{l.1}(a), there are real closed $(1,1)$-forms $e_1,\dots,e_b$ on $X$ with
\[
  \int_{C_l}e_j=\delta_{jl},\qquad 1\le j,l\le b .
\]
We identify $H^2(X,\mathbb R)$ with $\mathbb R^b$ via the period map
$[\beta]\mapsto x=(\int_{C_1}\beta,\dots,\int_{C_b}\beta)$ of Corollary~\ref{c.F}; under this
identification $H^2(X,\mathbb Q)=\mathbb Q^b$. Each $S\in\Sigma$ can be written
as $S=\sum_j a_j(S)\,[C_j]+t$ with $a_j(S)\in\mathbb Z$ and $t\in \mathcal T$. Since
periods vanish on torsion classes, $\int_S \beta =\sum_j a_j(S) \int_{C_j} \beta$, where
$a(S)=(a_1(S),\dots,a_b(S))\in\mathbb Z^b$.

\emph{Density of rational points.} Let $a^1,\dots,a^r\in\mathbb Z^b$ be a
basis of the $\mathbb Q$-span of $\{a(S):S\in\Sigma\}$, and let $A$ be the
$r\times b$ integer matrix with rows $a^1,\dots,a^r$. Then the classes
vanishing on $\Sigma$ form the subspace
$L=\ker_{\mathbb R}A\subset\mathbb R^b$. The rank of $A$ is the same over
$\mathbb Q$ and over $\mathbb R$, so $\ker_{\mathbb Q}A$ has
$\mathbb Q$-dimension $b-\operatorname{rank}A=\dim_{\mathbb R}L$, and a
$\mathbb Q$-basis of $\ker_{\mathbb Q}A$ is an $\mathbb R$-basis of $L$.
Hence, $L\cap\mathbb Q^b$ is dense in $L$.

\emph{Choice of the periods.} Write $\lambda_j=\int_{C_j}\omega$, so that
$\lambda=(\lambda_1,\dots,\lambda_b)\in L$ by hypothesis. Fix $\mathcal U$,
$k$, $\epsilon$, and $D$. Since $\omega$ is positive definite and $D$ is
compact, there is $\epsilon_0>0$ such that every real $(1,1)$-form $\alpha$
with $\|\alpha-\omega\|_{C^0(D)}<\epsilon_0$ is positive definite at every
point of $D$. Let $M_D=\max\{1,\max_j\|e_j\|_{C^k(D)}\}$, and choose
$\delta\in\bigl(0,\min\{\epsilon,\epsilon_0\}/(bM_D)\bigr]$ so small that every
$x\in\mathbb R^b$ with $|x_j-\lambda_j|<\delta$ for all $j$ lies in
$\mathcal U$. By density, there is $q\in L\cap\mathbb Q^b$ with
$|\lambda_j-q_j|<\delta$ for all $j$.

\emph{Choice of $\tilde\omega_D$.} Define
\[
  \tilde\omega_D=\omega-\sum_{j=1}^b(\lambda_j-q_j)\,e_j .
\]
This is a smooth, closed, real $(1,1)$-form with
$\int_{C_l}\tilde\omega_D=\lambda_l-(\lambda_l-q_l)=q_l$ for every $l$. Thus,
the class $[\tilde\omega_D]$ corresponds to $q\in L\cap\mathbb Q^b$ under the
period map, so it is rational, lies in $\mathcal U$ by the choice of $\delta$,
and satisfies $\int_S\tilde\omega_D=a(S)\cdot q=0$ for all $S\in\Sigma$. This
proves (a). Moreover
\[
  \|\tilde\omega_D-\omega\|_{C^k(D)}\le\sum_{j=1}^b|\lambda_j-q_j|\,
  \|e_j\|_{C^k(D)}<b\,\delta\,M_D\le\min\{\epsilon,\epsilon_0\}.
\]
Since $\|\cdot\|_{C^0(D)}\le\|\cdot\|_{C^k(D)}$, the form $\tilde\omega_D$ is positive
definite on $D$, which proves (b).
\end{proof}

\section{Proof of Theorem~\ref{t.main}}

We first give the proof for finite topology, then extend it to the general case.

\subsection{Case 1: $H_2(X,\Z)$ is finitely generated}
\paragraph{Step 1: Rational approximation.}
Let $n=\dim_{\mathbb C}X$ and fix a Hermitian metric on $X$; all $C^k$-norms
below are taken with respect to it. Shrinking $U$ to a smaller neighbourhood
of $K$ does not change $K=\{h\le0\}$, so we may assume that $\omega=dd^ch$
on $U$.

Choose an open set $U_1$ with $K\subset U_1\Subset U$; then
$m_1:=\min_{\partial U_1}h>0$, since $h>0$ on $U\setminus K$. By Sard's theorem we
may choose regular values $0<c<c'<m_1$ of $h$, and set
\[
  V=\{x\in U_1: h(x)<c\},\qquad V'=\{x\in U_1: h(x)<c'\}.
\]
Then $K\subset V\Subset V'\Subset U$, $\omega=dd^ch$ on $V'$, and $V$ is a
strictly pseudoconvex domain with smooth boundary $\{h=c\}$.
Let $\Psi$ be a smooth strictly plurisubharmonic exhaustion function on $X$; 
choose $R\in\mathbb R$ with $\overline{V'}\subset\{\Psi<R\}$ and set
$D=\{\Psi\le R\}$. Then $D$ is compact, and
\[
\overline V\subset\overline{V'}\subset\operatorname{int}D
\subset D .
\]
Let $\epsilon>0$; its value will be fixed in Step~4. It depends only on $V$,
$D$, the constant of Lemma~\ref{l.3}(b) for $V$, and the cutoff function
$\rho$ of Step~3, none of which depend on $\epsilon$.

Let $\Sigma\subset H_2(X,\mathbb Z)$ be the
image of the map $H_2(V',\mathbb Z)\to H_2(X,\mathbb Z)$ induced by
inclusion. Since $\omega=dd^ch$ is exact on $V'$, we have $\int_S\omega=0$
for all $S\in\Sigma$. Applying Lemma~\ref{l.ra} with this $\Sigma$, with $k=n+2$, with
$\epsilon$, and with the compact set $D$, we obtain a smooth, closed, real
$(1,1)$-form $\tilde\omega_D$ on $X$ such that
\[
  \tilde\omega_D>0\ \text{at every point of }D,\qquad
  \|\tilde\omega_D-\omega\|_{C^{n+2}(D)}<\epsilon ,
\]
and whose class $[\tilde\omega_D]\in H^2(X,\mathbb Q)$ is rational and
satisfies
\[
  \int_S \tilde\omega_D=0\quad\text{for all }S\in\Sigma.
\]

Put $\Theta=\tilde\omega_D-\omega$. Then for every
integral $2$-cycle $S'$ in $V'$, its class in $H_2(X,\mathbb Z)$ lies in
$\Sigma$, so
\[
  \int_{S'}\Theta=\int_{S'}\tilde\omega_D-\int_{S'}\omega=0 .
\]
By Prop.~\ref{p.UCT}(a), applied to $V'$, the form $\Theta$ is
$d$-exact on $V'$, which is a neighbourhood of $\overline V$. Finally, since $V\subset D$,
\[
  \|\Theta\|_{C^{n+2}(V)}\le\|\Theta\|_{C^{n+2}(D)}<\epsilon .
\]

\medskip

\paragraph{Step 2: Solving the $dd^c$-equation.}
By Step~1, $\Theta=\tilde\omega_D-\omega$ is a smooth real $(1,1)$-form on
$X$ which is $d$-exact on a neighbourhood of $\overline V$, and
$\|\Theta\|_{C^{n+2}(V)}<\epsilon$. By Lemma~\ref{l.3}(b), applied to the strictly
pseudoconvex domain $V$, there exists $u\in C^\infty(\overline V)$ with
$dd^cu=\Theta$ on $V$ and
\[
  \|u\|_{C^2(V)}\le C\,\|\Theta\|_{C^{n+2}(V)}<C\epsilon,
\]
where $C$ depends only on $V$ and the metric.

\medskip

\paragraph{Step 3: Gluing.}
Fix an open neighbourhood $W$ of $K$ with $\overline W\subset V$, and a smooth
function $\rho\colon X\to[0,1]$ with $\rho\equiv 1$ on $W$ and
$\operatorname{supp}\rho\Subset V$. The function $\rho u$, extended by zero,
is smooth on $X$. Define
\[
  \tilde\omega=\tilde\omega_D-dd^c(\rho u).
\]
This is a smooth, closed, real $(1,1)$-form on $X$ with
$[\tilde\omega]=[\tilde\omega_D]$. Applying~\eqref{eq:ddc-product} to
$dd^c(\rho u)$ and using $dd^cu=\tilde\omega_D-\omega$ on $V$, we obtain on $V$
\[
  \tilde\omega=(1-\rho)\,\tilde\omega_D+\rho\,\omega-E,\quad\text{where}\quad
  E=u\,dd^c\rho+d\rho\wedge d^cu+du\wedge d^c\rho .
\]
Outside $\operatorname{supp}\rho$ we have $\tilde\omega=\tilde\omega_D$,
and on $W$ we have $E=0$ and $\tilde\omega=\omega$. Moreover, there is
a constant $C_\rho$, depending only on $\|\rho\|_{C^2}$ and the metric, such
that
\[
  \|E\|_{C^0(V)}\le C_\rho\,\|u\|_{C^2(V)}<C_\rho\,C\,\epsilon .
\]

\medskip

\paragraph{Step 4: Global positivity and integrality.}
Let $g$ be the fundamental form of the fixed Hermitian metric, and write
$\alpha\ge\beta$ if $\alpha-\beta$ is positive semidefinite. There is a
constant $c_0>0$ such that $\alpha\ge -c_0\|\alpha\|_{C^0(Q)}\,g$ on $Q$ for
every real $(1,1)$-form $\alpha$ and every set $Q$.

Since $\omega$ is positive definite and $D$ is
compact, there is $m>0$ with $\omega\ge 2m\,g$ on $D$. We now fix
$\epsilon>0$ so small that
\[
  c_0\,\epsilon\le m\qquad\text{and}\qquad c_0\,C_\rho\,C\,\epsilon<m .
\]
This choice depends only on $D$, $V$ (through $C$) and $\rho$ (through
$C_\rho$).

\emph{Positivity on $D$.} Since $\|\tilde\omega_D-\omega\|_{C^0(D)}<\epsilon$,
we have $\tilde\omega_D\ge\omega-c_0\epsilon\,g\ge m\,g$ on $D$. On $V$, the
form $(1-\rho)\tilde\omega_D+\rho\,\omega$ is a convex combination of forms
$\ge m\,g$, hence
\[
  \tilde\omega \ge m\,g-c_0\|E\|_{C^0(V)}\,g
  >(m-c_0C_\rho C\epsilon)\,g\ge 0 .
\]
On $D\setminus\operatorname{supp}\rho$ we have
$\tilde \omega =\tilde\omega_D\ge m\,g$. It follows that $\tilde \omega$
is positive definite on $D=\{\Psi\le R\}$.

\emph{Global positivity.} The set $D=\{\Psi\le R\}$ is compact and
$\mathcal O(X)$-convex, because it is convex with respect to the
plurisubharmonic function $\Psi$, and on a Stein manifold the
plurisubharmonic and holomorphic hulls of a compact set coincide
\cite[Sec.~5.1]{Hormander}. Since $\tilde\omega$ is positive definite on
$D$, Lemma~\ref{l.1}(b) gives a K\"ahler form $\Omega$ on $X$ with
$[\Omega]=[\tilde\omega]=[\tilde\omega_D]$ and $\Omega=\tilde\omega$ on $D$.
Since $W\subset V\subset D$, we have $\Omega=\tilde\omega=\omega=dd^ch$ on $W$.

\emph{Integrality.} Since $[\Omega]=[\tilde\omega_D]$ is rational and
$H_2(X,\Z)$ is finitely generated, by Corollary~\ref{c.F} there is an integer
$N\ge1$ such that $N[\Omega]$ lies in the Hodge lattice $\mathcal H_X$.
Thus $\Omega_H:=N\Omega$ is a
Hodge form on $X$, and on the neighbourhood $W$ of $K$,
\[
  \Omega_H=N\,dd^ch=dd^c(Nh).
\]
By~\cite{BGS}, $K$ is meromorphically convex in $X$.

\subsection{Case 2: the general case}

We now only assume~\eqref{e.h2}: $H_2(X,\Z)=\mathcal F\oplus\mathcal T$ with
$\mathcal F$ free, possibly of infinite rank, and $\mathcal T$ torsion. We use the notation
of Sec.~\ref{ssec:decomp}; in particular $\operatorname{pr}\colon H_2(X,\Z)\to\mathcal F$ is the projection
with kernel $\mathcal T$.

\medskip

\paragraph{Step 1: Setup.}
Let $n$, the Hermitian metric, $V\Subset V'\Subset U$, $\Psi$, $R$,
$D=\{\Psi\le R\}$ be as in Step~1 of Case~1, and
let $W$ and $\rho$ be as in Step~3 of Case~1. Let $\epsilon>0$ be the constant fixed in
Step~4 of Case~1; it depends only on $V$, $D$, the constant of
Lemma~\ref{l.3}(b) for $V$, and $\rho$.

Choose a regular value $R_1>R$ of $\Psi$ and set $X_1=\{\Psi<R_1\}$. Then
$X_1$ is Stein,  $\overline{X_1}$ is a compact manifold with
boundary, and so $H_2(X_1,\Z)$ is finitely generated. Let
\[
  \Phi_1\colon H_2(X_1,\Z)\to H_2(X,\Z),\qquad \mathcal K_1=\ker\Phi_1,
\]
where $\Phi_1$ is induced by inclusion.

\medskip

\paragraph{Step 2: Rational approximation on $X_1$.}
Let $\Sigma\subset H_2(X_1,\Z)$ be the union of $\mathcal K_1$ and the image
of $H_2(V',\Z)\to H_2(X_1,\Z)$. The K\"ahler form $\omega|_{X_1}$ has zero
periods on $\Sigma$: on the image of $H_2(V',\Z)$ because $\omega=dd^ch$ is
exact on $V'$, and on $\mathcal K_1$ because
$\int_S\omega=\langle[\omega],\Phi_1(S)\rangle=0$ for $S\in\mathcal K_1$.

Apply Lemma~\ref{l.ra} to the Stein manifold $X_1$, the K\"ahler form
$\omega|_{X_1}$, this $\Sigma$, $k=n+2$, $\epsilon$, and the compact set
$D\subset X_1$. We obtain a smooth, closed, real $(1,1)$-form
$\tilde\omega_D$ on $X_1$, positive definite on $D$, with
$\|\tilde\omega_D-\omega\|_{C^{n+2}(D)}<\epsilon$, whose class
$[\eta]:=[\tilde\omega_D]\in H^2(X_1,\Q)$ is rational and has zero periods on
$\Sigma$.

Steps~1--3 of Case~1 and \emph{Positivity on $D$} of Step~4 now
apply verbatim, with $X$ replaced by $X_1$: they only use that $\tilde\omega_D$
is a closed real $(1,1)$-form on a neighbourhood of $D$ with these
properties, whose class vanishes on the image of $H_2(V',\Z)$. We obtain, as
in Step~3 of Case~1, a smooth, closed, real $(1,1)$-form $\tilde\omega$ on
$X_1$ such that
\[
  [\tilde\omega]=[\eta]\in H^2(X_1,\R),\qquad
  \tilde\omega>0\ \text{on }D,\qquad
  \tilde\omega=\omega\ \text{on }W .
\]

\medskip

\paragraph{Step 3: Extending the class to $X$.}
Let $I_1=\operatorname{pr}\bigl(\Phi_1(H_2(X_1,\Z))\bigr)\subset\mathcal F$, a finitely
generated subgroup, and let $\mathcal S_1$ be its saturation in $\mathcal F$.
By Lemma~\ref{l.sat}, $\mathcal S_1$ is a finitely generated direct summand of
$\mathcal F$; write $\mathcal F=\mathcal S_1\oplus\mathcal B$.

\emph{The prescribed periods.} Define
$\phi\colon\Phi_1(H_2(X_1,\Z))\to\Q$ by $\phi(\Phi_1(z))=\langle[\eta],z\rangle$.
This is well-defined because $[\eta]$ vanishes on $\mathcal K_1$, and it is
$\Q$-valued because $[\eta]$ is rational. If $\Phi_1(z)\in\mathcal T$, then
$n\Phi_1(z)=0$ for some $n\ge1$, so $nz\in\mathcal K_1$ and
$\langle[\eta],z\rangle=0$. Hence, $\phi$ vanishes on
$\Phi_1(H_2(X_1,\Z))\cap\mathcal T$, and so $\phi=\bar\phi\circ\operatorname{pr}$ for a
homomorphism $\bar\phi\colon I_1\to\Q$. Since $\mathcal S_1/I_1$ is torsion and
$\Q$ is uniquely divisible, $\bar\phi$ extends uniquely to a homomorphism
$\psi\colon\mathcal S_1\to\Q$, namely $\psi(x)=\bar\phi(nx)/n$ whenever
$nx\in I_1$. Let $C_1,\dots,C_m$ be a basis of $\mathcal S_1$, and let $N\ge1$
be a common denominator of $\psi(C_1),\dots,\psi(C_m)$; then
$N\psi(\mathcal S_1)\subset\Z$.

Let $c_\psi\in H^2(X,\R)$ be the class given by Lemma~\ref{l.pre}, with
$\mathcal S=\mathcal S_1$. It has the following two properties.

(i) \emph{$c_\psi|_{X_1}=[\eta]$.} For $z\in H_2(X_1,\Z)$ we have
$\operatorname{pr}(\Phi_1(z))\in I_1\subset\mathcal S_1$, so
\[
  \langle c_\psi|_{X_1},z\rangle=\chi_\psi(\Phi_1(z))=\psi\bigl(\operatorname{pr}(\Phi_1(z))\bigr)
  =\bar\phi\bigl(\operatorname{pr}(\Phi_1(z))\bigr)=\phi(\Phi_1(z))=\langle[\eta],z\rangle .
\]
By Prop.~\ref{p.UCT}(a) on $X_1$, the two classes coincide in
$H^2(X_1,\R)$.

(ii) \emph{$Nc_\psi\in\mathcal H_X$.} This is the last assertion of Lemma~\ref{l.pre}.

\medskip

\paragraph{Step 4: Gluing and global positivity.}
By Lemma~\ref{l.1}(a), we may represent $c_\psi$
by a smooth, closed, real $(1,1)$-form $\Omega_\psi$ on $X$. By (i),
$[\Omega_\psi]$ and $[\tilde\omega]$ coincide in $H^2(X_1,\R)$, so
$\tilde\omega-\Omega_\psi$ is exact on the Stein manifold $X_1$. By
Lemma~\ref{l.3}(a), there is a real function $f\in C^\infty(X_1)$ with
$\tilde\omega=\Omega_\psi+dd^cf$ on $X_1$. Choose a smooth function
$\tilde\rho\colon X\to[0,1]$ with $\tilde\rho\equiv1$ on a neighbourhood of
$D$ and $\operatorname{supp}\tilde\rho\Subset X_1$, and put
\[
  \hat\omega=\Omega_\psi+dd^c(\tilde\rho f).
\]
This is a smooth, closed, real $(1,1)$-form on $X$ with
$[\hat\omega]=c_\psi$, and $\hat\omega=\tilde\omega$ on $D$. In
particular $\hat\omega$ is positive definite on $D$, and $\hat\omega=\omega$
on $W$.

The set $D=\{\Psi\le R\}$ is compact and $\mathcal O(X)$-convex (see Step~4 of
Case~1), and $\hat\omega$ is positive definite on $D$. By
Lemma~\ref{l.1}(b) there is a K\"ahler form $\Omega$ on $X$ with
$[\Omega]=[\hat\omega]=c_\psi$ and $\Omega=\hat\omega$ on $D$. Since
$W\subset V\subset D$ and $\hat\omega=\omega=dd^ch$ on $W$, we have
$\Omega=dd^ch$ on $W$.

By (ii), $\Omega_H:=N\Omega$ is a Hodge form on $X$, and on the
neighbourhood $W$ of $K$,
\[
  \Omega_H=N\,dd^ch=dd^c(Nh).
\]
By~\cite{BGS}, $K$ is meromorphically convex in $X$. This completes the proof of
Theorem~\ref{t.main}.

\subsection{Trivial extension}\label{s.triv}
In this subsection we prove a proposition showing that if the topology of $X$ near the hull of $K$
is trivial in a suitable sense, then no assumption on $H_2(X,\Z)$ is needed, and
a K\"ahler extension of $dd^ch$ can even be taken exact. Recall that the
holomorphically convex hull of $K$ is
$\widehat K_{\mathcal O(X)}=\{x\in X:\ |f(x)|\le\max_K|f|\ \text{for all } f\in\mathcal O(X)\}$.

\begin{prop}\label{prop:exact}
Let $X$ be a Stein manifold, and let $U$, $h$ and $K=\{h\le0\}$ be as in
Theorem~\ref{t.main}, but without assuming~\eqref{e.h2}. Suppose that $dd^ch$
extends to a K\"ahler form $\omega$ on $X$, and that $\widehat K_{\mathcal O(X)}$ has a
neighbourhood $W$ such that the image of $H_2(W,\Z)\to H_2(X,\Z)$ consists of
torsion classes. Then there is a smooth strictly plurisubharmonic function
$\varphi$ on $X$ with $dd^c\varphi=dd^ch$ on a neighbourhood of $K$.
In particular, by~\cite{BGS}, $K$ is convex with respect to principal hypersurfaces.
\end{prop}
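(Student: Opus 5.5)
The plan is to modify $\omega$ by a closed form that cancels all of its periods, but only away from a neighbourhood of $\widehat K_{\mathcal O(X)}$. After that modification, $\omega$ becomes exact while still equal to $dd^ch$ near $K$. Positivity is then restored with Lemma~\ref{l.1}(b), and the result is written as $dd^c\varphi$ using Remark~\ref{rem:exact}.

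\emph{Step 1: a Stein neighbourhood.} Since $\widehat K_{\mathcal O(X)}$ is compact and $\mathcal O(X)$-convex, we can choose a smooth strictly plurisubharmonic exhaustion $\Psi$ with $\Psi<0$ on $\widehat K_{\mathcal O(X)}$ and $\Psi>0$ off $W$ \cite[Theorem~5.1.6]{Hormander}. Pick a small regular value $r>0$ and set $U'=\{\Psi<r\}$. Then $U'$ is Stein, contains $\widehat K_{\mathcal O(X)}\supset K$, and lies in $W$. The image of $H_2(U',\Z)\to H_2(X,\Z)$ therefore lies in the image of $H_2(W,\Z)$, so it consists of torsion classes.

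\emph{Step 2: cancel the periods.} Let $\varphi_0=\langle[\omega],\cdot\rangle\colon H_2(X,\Z)\to\R$. Periods vanish on torsion classes, so $\varphi_0$ vanishes on the image of $H_2(U',\Z)$. Lemma~\ref{l:loc}, applied with $U'$ and a compact neighbourhood $D_0\subset U'$ of $\widehat K_{\mathcal O(X)}$, gives a closed real $(1,1)$-form $\beta$ with periods $\varphi_0$ and $\beta=0$ near $D_0$. Then $w=\omega-\beta$ is closed, has all periods zero, and is therefore exact by Prop.~\ref{p.UCT}(a). Moreover $w=\omega$ near $D_0$; shrinking the neighbourhood of $K$, we also have $w=dd^ch$ there.

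\emph{Step 3: restore positivity.} The form $w$ is positive definite only near $D_0$, not globally. Let $D$ be a compact $\mathcal O(X)$-convex sublevel set of $\Psi$ with $\widehat K_{\mathcal O(X)}\subset\operatorname{int} D\subset D\subset D_0$. Lemma~\ref{l.1}(b) gives a Kähler form $\omega'$ with $[\omega']=[w]=0$ and $\omega'=w$ on $D$. By Remark~\ref{rem:exact}, $\omega'=dd^c\varphi$ with $\varphi$ smooth and strictly plurisubharmonic, and $dd^c\varphi=dd^ch$ on a neighbourhood of $K$.

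The final claim about convexity with respect to principal hypersurfaces follows from \cite{BGS}. The only subtle point is Step~1: we need a neighbourhood that is Stein (so that Lemma~\ref{l:loc} applies) and also lies inside $W$. This is exactly why the hypothesis is placed on a neighbourhood of the hull $\widehat K_{\mathcal O(X)}$ rather than of $K$ itself, and $\mathcal O(X)$-convexity of the hull supplies such a neighbourhood.
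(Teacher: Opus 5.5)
Your proposal is correct and is essentially the paper's proof: the same H\"ormander exhaustion $\Psi$, Lemma~\ref{l:loc} applied on a Stein sublevel set of $\Psi$ inside $W$ to cancel all periods of $\omega$ away from the hull, then Lemma~\ref{l.1}(b) on a compact sublevel set, and finally the $dd^c$-lemma. One small slip is in Step~3: a sublevel set of $\Psi$ need not fit inside an arbitrary compact neighbourhood $D_0$, so choose $D_0=D=\{\Psi\le R_0\}$ with $\max_{\widehat K_{\mathcal O(X)}}\Psi<R_0<r$, as the paper does with $X_0$.
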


\begin{proof}
By \cite[Theorem~5.1.6]{Hormander} there is a smooth strictly
plurisubharmonic exhaustion function $\Psi$ on $X$ with $\Psi<0$ on
$\widehat K_{\mathcal O(X)}$ and $\Psi>0$ on $X\setminus W$. Fix $R_0$ and $R_0'$ with
$\max_{\widehat K_{\mathcal O(X)}}\Psi<R_0<R_0'<0$, and put
\[
  X_0=\{\Psi\le R_0\},\qquad U_0=\{\Psi<R_0'\}.
\]
Then $X_0$ is compact, $K\subset\widehat K_{\mathcal O(X)}\subset\operatorname{int}X_0$, and
$U_0\subset W$ is Stein. The map
$H_2(U_0,\Z)\to H_2(X,\Z)$ factors through $H_2(W,\Z)$, so its image
consists of torsion classes, on which all periods vanish.

Hence, the homomorphism $\langle[\omega],\cdot\rangle\colon H_2(X,\Z)\to\R$
vanishes on the image of $H_2(U_0,\Z)$. By Lemma~\ref{l:loc}, applied to
$U_0$ and the compact set $X_0$, there is a smooth, closed, real
$(1,1)$-form $\beta$ on $X$ with periods $\langle[\omega],\cdot\rangle$
which vanishes on a neighbourhood of $X_0$. Put $\hat\omega=\omega-\beta$.
All periods of $\hat\omega$ vanish, so $\hat\omega$ is exact by
Prop.~\ref{p.UCT}(a), and $\hat\omega=\omega>0$ on $X_0$.

The set $X_0$ is compact and $\mathcal O(X)$-convex, being a sublevel set of
the strictly plurisubharmonic exhaustion function $\Psi$ (see Step~4 of
Case~1). By Lemma~\ref{l.1}(b) there is a K\"ahler form $\Omega$ on $X$ with
$[\Omega]=[\hat\omega]=0$ and $\Omega=\hat\omega=\omega$ on $X_0$. Thus
$\Omega$ is exact, and $\Omega=dd^ch$ on a neighbourhood of $K\subset\operatorname{int}X_0$.
By Lemma~\ref{l.3}(a) there is $\varphi\in C^\infty(X)$ with
$dd^c\varphi=\Omega$, and $\varphi$ is strictly plurisubharmonic because
$\Omega$ is a K\"ahler form.
\end{proof}

\section{Proof of Theorem~\ref{thm:counterexample}}\label{sec:counterexample}

We first outline the idea; $t=\log|w|$ denotes the height in the fibre of
$X=Y\times\mathbb{C}^*$. The torus $\Lambda$ lies over a figure-eight
$\Gamma\subset Y$ with double point at $0\in\mathbb{R}^5$. Over one loop of
$\Gamma$ the height rises by $1$, and over the other loop it returns, so
$\Lambda$ meets the fibre over $0$ in two circles $C_0,C_1$ at heights $0$ and
$1$. They cobound the holomorphic annulus $A=\{0\}\times\{1\le|w|\le e\}$ and
an annulus $B\subset\Lambda$. A Kähler form $\Omega$ for which $\Lambda$ is
isotropic integrates to zero over $B$ and to a positive number over $A$, so it
has a positive period on the cycle $A-B$; this is what rules out Hodge
extensions, and, with positivity of intersections, the meromorphic convexity of
$\Lambda$. The difficulty is to find $h$ vanishing to second order exactly on
$\Lambda$ with $dd^ch$ equal to a \emph{global} Kähler form. The natural
potential degenerates along $C_0\cup C_1$. Since $\Gamma$ is straight near the
double point, this degeneracy is explicit, and we remove it by adding small
multiples of explicit pluriharmonic functions, cut off away from
$C_0\cup C_1$ (Lemma~\ref{lem:correction}).

\subsection{The manifold}\label{ssec:manifold}

\begin{lemma}\label{lem:Y}
There exist an open set $U_M\subset\mathbb{R}^5$ and a Stein domain
$Y\subset\mathbb{C}^5$ with $Y\cap\mathbb{R}^5=U_M$ such that $U_M$ is a
deformation retract of $Y$ and
\[
H_1(Y,\mathbb{Z})\cong\mathbb{Q},\qquad H_2(Y,\mathbb{Z})=0 .
\]
\end{lemma}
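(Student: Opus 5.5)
Let me fix the plan first: build an open set $U_M\subset\R^5$ with $H_1(U_M,\Z)\cong\Q$ and $H_2(U_M,\Z)=0$ by realising a mapping telescope of degree-$p_k$ self-maps of the circle as an open subset of $\R^5$. Then take $Y$ to be a thin tube $U_M+i\,B$ around it in $\C^5$.

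\emph{The real model.} I would start from the standard $2$-complex $T$, the mapping telescope of $S^1\xrightarrow{2}S^1\xrightarrow{3}S^1\xrightarrow{4}\cdots$. One could use any sequence $p_k\ge2$ in which every prime divides infinitely many partial products. Then $H_1(T)=\varinjlim(\Z\xrightarrow{2}\Z\xrightarrow{3}\cdots)\cong\Q$, and $H_2(T)=0$: the telescope is an increasing union of finite pieces $T_k$, each a deformation retract to a circle, and homology commutes with direct limits. Each $T_k$ is a finite $2$-complex, and $T=\bigcup T_k$ embeds piecewise-linearly and properly in an open ball of $\R^5$. Any $2$-complex embeds in $\R^5$, and properness can be arranged by pushing the $k$-th piece towards the boundary sphere. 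I would then let $U_M$ be a regular neighbourhood of this embedded complex, chosen with radii shrinking along the telescope so that it remains an open set deformation retracting onto $T$. Consequently $H_1(U_M)\cong\Q$ and $H_2(U_M)=0$. An alternative I would keep in reserve is an explicit construction: a nested union of solid tori $S^1\times B^4$ in $\R^5$, each winding $p_k$ times inside the next. This is a direct analogue of the solenoid complement, except that we use the union rather than the intersection. Its union has the same homology by the direct-limit argument, since each inclusion is multiplication by $p_k$ on $H_1=\Z$ and $H_2=0$.

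\emph{Complexification.} Next I would choose a smooth exhaustion of $U_M$ and a positive continuous function $\delta$ on $U_M$ that decays near $\partial U_M$. I would set $Y=\{x+iy:\ x\in U_M,\ |y|<\delta(x)\}$. Then $Y\cap\R^5=U_M$, and $(x,y)\mapsto(x,sy)$ for $s\in[0,1]$ is a deformation retraction of $Y$ onto $U_M$, so $Y$ has the stated homology. To get a Stein $Y$ I would invoke the standard result that a totally real submanifold (here the open set $U_M\subset\R^5$, a real-analytic totally real submanifold of $\C^5$ of maximal dimension) has a basis of Stein tubular neighbourhoods (Grauert). Concretely, take $\phi(x+iy)=|y|^2+\chi(x)$, where $\chi$ is a convex-enough exhaustion of $U_M$. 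Estimate~\eqref{eq:levi-example} does not quite apply since $\chi$ is not globally convex, so instead I would use $|y|^2$ multiplied by a large function. Alternatively, use $\phi=\lambda(x)|y|^2+\chi(x)$ and pick $\delta$ so small that the cross terms are dominated, as in~\eqref{eq:levi-example}. Then $Y=\{\phi<c\}$, after shrinking, is a sublevel set of a strictly psh exhaustion, hence Stein. The real part of $Y$ stays exactly $U_M$ and the fibres stay convex balls, so the retraction survives.

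\emph{Main obstacle.} I expect the main obstacle to be making the Stein tube genuinely exhaustive while keeping the fibres star-shaped, so that the retraction still works. The point is to control $\chi$ near $\partial U_M$: its Hessian may be large and non-positive in imaginary directions. I would handle this by choosing $\chi$ with $\chi\to\infty$ at $\partial U_M$ and then $\lambda\ge\|\operatorname{Hess}\chi\|+1$, which makes $\phi$ strictly psh by~\eqref{eq:levi-example} applied locally. Finally I would take $Y=\{|y|^2<\lambda(x)^{-1}\}$ intersected with $\{x\in U_M\}$, and exhibit $-\log(\lambda^{-1}-|y|^2)+\chi$ as a psh exhaustion.
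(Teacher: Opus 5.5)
Your topological construction is the paper's: the same telescope $S^1\xrightarrow{2}S^1\xrightarrow{3}\cdots$, the direct-limit computation, a proper PL embedding in $\R^5$, and a regular neighbourhood $U_M$. The paper makes the embedding explicit. It sends the $j$-th vertex to $(j,j^2,\dots,j^5)$, so the vertices of any two simplices are affinely independent, and properness is automatic. Your ``push the $k$-th piece towards the boundary sphere'' only sketches the standard general-position argument.

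The real difference is the complexification. The paper cites \cite{Na}, Lemma~6, which gives exactly a Stein $Y$ with $Y\cap\R^5=U_M$ that retracts onto $U_M$. You build the tube by hand instead. That is a reasonable self-contained replacement, but as written it does not work.

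\textbf{The gap.} The only constraint you put on $\lambda$ is the lower bound $\lambda\ge\|\operatorname{Hess}\chi\|+1$. This does not make $-\log(\lambda^{-1}-|y|^2)+\chi$ plurisubharmonic. It does not even make $Y=\{x\in U_M,\ |y|^2<\lambda(x)^{-1}\}$ pseudoconvex. Put $f=\lambda^{-1}(x)-|y|^2$ and $\xi=a+ib$. Then
\[
-L_f(\xi)=\tfrac12|\xi|^2-\tfrac14\bigl(\operatorname{Hess}(\lambda^{-1})(a,a)+\operatorname{Hess}(\lambda^{-1})(b,b)\bigr),
\]
and this is the Levi form of the defining function $-f$ of $Y$. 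A small function $\lambda^{-1}$ can still oscillate so that $\operatorname{Hess}(\lambda^{-1})$ has two eigenvalues $>2$ at some $x$. Then $-L_f<0$ on a complex $2$-plane. That plane meets the complex tangent hyperplane of $\partial Y$ at any boundary point over $x$ in a nonzero vector, so the Levi condition fails and $Y$ is not Stein.

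\textbf{The fix.} Also require an upper bound on the Hessian of $\lambda^{-1}$, e.g.\ $\|\operatorname{Hess}(\lambda^{-1})\|\le1$. Then $-L_f\ge\frac14|\xi|^2$. Since $0<f\le\lambda^{-1}$ and $L_{-\log f}\ge -L_f/f$, you get $L_{-\log f}\ge\frac{\lambda}{4}|\xi|^2$. This dominates $L_\chi\ge-\frac14\|\operatorname{Hess}\chi\|\,|\xi|^2$, so the sum is strictly plurisubharmonic. The exhaustion property follows from $-\log f\ge\log\lambda\ge0$ and $\chi\to\infty$ at $\partial U_M$.

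Such a $\lambda$ exists. Take a locally finite partition of unity $\{\theta_j\}$ of $U_M$ by compactly supported functions, and put $\lambda^{-1}=\sum_j c_j\theta_j$ with
\[
0<c_j\le\min_{\operatorname{supp}\theta_j}\bigl(1+\|\operatorname{Hess}\chi\|\bigr)^{-1}\quad\text{and}\quad c_j\|\theta_j\|_{C^2}\le 2^{-j}.
\]
With this added, your route gives a self-contained proof of the step the paper takes from Narasimhan. Your intermediate suggestion $Y=\{\phi<c\}$ should be dropped, since its real trace is not $U_M$.

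\textbf{A minor point.} Your parenthetical condition on general degrees is wrong as stated. ``Every prime divides infinitely many partial products'' holds as soon as the prime divides a single $p_k$. For example, with $p_k$ the $k$-th prime the direct limit is the group of rationals with squarefree denominators, not $\Q$. The correct condition is that every positive integer divides some product $p_1\cdots p_k$, equivalently that each prime divides infinitely many of the $p_k$. Your actual choice $p_k=k+1$ is fine.
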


\begin{proof}
Recall that the \emph{mapping cylinder} of a continuous map
$f\colon A\to B$ is the space
\[
  M_f=\bigl((A\times[0,1])\sqcup B\bigr)/\bigl((a,1)\sim f(a)\bigr),
\]
obtained by attaching the end $A\times\{1\}$ of the cylinder $A\times[0,1]$
to $B$ via $f$. It contains $A\cong A\times\{0\}$ and $B$, and it deformation
retracts onto $B$ by sliding each segment $\{a\}\times[0,1]$ to its endpoint
$f(a)$. Given a sequence of maps $A_1\xrightarrow{f_1}A_2\xrightarrow{f_2}\cdots$,
its \emph{mapping telescope} is the union of the mapping cylinders $M_{f_k}$,
where the target end $A_{k+1}$ of $M_{f_k}$ is identified with the source end
$A_{k+1}\times\{0\}$ of $M_{f_{k+1}}$; see \cite[Ch.~0 and Sec.~3.F]{Hatcher}.

Let $S_1,S_2,\dots$ be copies of the circle, let $f_k\colon S_k\to S_{k+1}$
be the map $z\mapsto z^{k+1}$, and let $M_k$ be its mapping cylinder. Let
$M$ be the mapping telescope of the sequence
$$
S_1\xrightarrow{f_1}S_2\xrightarrow{f_2}S_3\xrightarrow{f_3}\cdots .
$$
This is obtained as the union of the $M_k$ glued along $M_k\cap M_{k+1}=S_{k+1}$, i.e.,
by gluing the target $S^1$ of $f_k$ with the source $S^1$ of $f_{k+1}$.

\emph{Homology of $M$.} Put $T_0=S_1$ and $T_k=M_1\cup\dots\cup M_k$
for $k\ge1$. Since each mapping cylinder retracts onto its target, the
compact set $T_k$ deformation retracts onto $S_{k+1}$. So
$H_1(T_k,\mathbb{Z})\cong\mathbb{Z}$, generated by $[S_{k+1}]$, and
$H_2(T_k,\mathbb{Z})=0$. Under the inclusion $T_k\subset T_{k+1}$ followed
by the retraction onto $S_{k+2}$, the circle $S_{k+1}$ maps by $f_{k+1}$,
which has degree $k+2$. Therefore, the induced map $H_1(S_{k+1}, \Z) \to H_1(S_{k+2}, \Z)$
is multiplication by $k+2$. 

Every compact subset of $M$ lies in some $T_k$, so
by \cite[Prop.~3.33]{Hatcher}
\[
H_1(M,\mathbb{Z})\cong\varinjlim\bigl(\mathbb{Z}\xrightarrow{\,\cdot 2\,}\mathbb{Z}\xrightarrow{\,\cdot 3\,}\mathbb{Z}\xrightarrow{\,\cdot 4\,}\cdots\bigr)\cong\mathbb{Q},
\qquad H_2(M,\mathbb{Z})=0 ,
\]
To evaluate this direct limit, map the $k$-th copy of $\mathbb{Z}$ injectively into $\mathbb{Q}$ by sending its generator $1$ to $\frac{1}{k!}$. This mapping is well-defined and commutes with the directed system because the transition from the $k$-th stage to the $(k+1)$-th stage is multiplication by $k+1$, and $(k+1) \cdot \frac{1}{(k+1)!} = \frac{1}{k!}$.
The limit maps onto all of $\mathbb{Q}$ because any rational number $\frac{p}{q}$ can be written as $\frac{p(q-1)!}{q!}$, which lies in the image of the $q$-th stage. Therefore, the direct limit is the additive group of rational numbers.

\emph{Embedding $M$ in $\mathbb{R}^5$.} Triangulate $M$ inductively, so that
the triangulations of $M_k$ and $M_{k+1}$ agree on $S_{k+1}$ (the maps $f_k$
cannot all be simplicial for a fixed triangulation of the circles, but the
cylinder $M_{k+1}$ can be triangulated with the already fixed triangulation of
$S_{k+1}$ at its source end, after subdividing its target end). This makes $M$
a countable, locally finite simplicial complex of dimension $2$. Enumerate its
vertices as $v_1,v_2,\dots$ and put
$p_j=(j,j^2,j^3,j^4,j^5)\in\mathbb{R}^5$. Any six distinct points $p_j$ are
affinely independent, since the matrix with rows $(1,t,\dots,t^5)$ is a
Vandermonde matrix. 
Let $\Phi\colon M\to\mathbb{R}^5$ be the map that sends $v_j$ to $p_j$ for
every $j$ and is affine on each simplex of $M$, so that a simplex with
vertices $v_{j_0},\dots,v_{j_k}$ is mapped onto the simplex spanned by
$p_{j_0},\dots,p_{j_k}$. Two simplices of $M$ have at most $6$ vertices
altogether, so by uniqueness of barycentric coordinates $\Phi$ is injective:
two simplices meet exactly in their common face, and so do their
images. It is proper, because only finitely many simplices have a vertex
$v_j$ with $j\le J$, and all the other simplices lie in $\{x_1>J\}$. Thus, $M$
is homeomorphic to a closed locally finite polyhedron in $\mathbb{R}^5$, which
we again denote by $M$.

\emph{Thickening.} Since $M$ is a closed subpolyhedron of $\mathbb{R}^5$,
it has an open neighbourhood $U_M\subset\mathbb{R}^5$ of which it is a
deformation retract, for instance the interior of a regular neighbourhood
\cite[Ch.~3]{RS}. By \cite[Lemma~6]{Na}, there exists a Stein domain $Y\subset \mathbb{C}^5$
such that $Y\cap \R^5 = U_M$ and $U_M$ is a deformation retract of $Y$. 
Then $Y$ is homotopy equivalent to $M$, which proves the lemma.
\end{proof}

\begin{remark}
For $k=1$ and $G=\mathbb{Q}$, \cite[Theorem~5]{Na} produces a Runge domain
$D\subset\mathbb{C}^5$ with $H_1(D,\mathbb{Z})\cong\mathbb{Q}$, but gives no
control of $H_2(D,\mathbb{Z})$. The telescope $M$ is a concrete complex for
which $H_2(M,\mathbb{Z})=0$ can be verified directly, as needed in
Lemma~\ref{lem:Y}.
\end{remark}

From now on $Y$ and $U_M$ are as in Lemma~\ref{lem:Y}. A translation
$z\mapsto z-q$ of $\mathbb{C}^5$ by a real vector $q\in\mathbb{R}^5$ is a
biholomorphism that maps $\mathbb{R}^5$ onto itself, so it preserves all the
properties in Lemma~\ref{lem:Y}. Translating $Y$ and $U_M$ by a point of $U_M$,
we assume from now on that $0\in U_M$. We denote by $e_1,\dots,e_5$ the standard
basis of $\mathbb{R}^5$, by $\mathbb{R}_{x_j}=\{x\in\mathbb{R}^5:\ x_k=0 \text{ for } k\ne j\}$
the real $x_j$-axis, and we write $z=x+iy$, with $x=\operatorname{Re}z$ and
$y=\operatorname{Im}z$ in $\mathbb{R}^5$, for the coordinates on $\mathbb{C}^5$.
For $r>0$ let $\mathbb{B}_r=\{z\in\mathbb{C}^5:|z|<r\}$; its trace
$\mathbb{B}_r\cap\mathbb{R}^5$ is the ball of radius $r$ in $\mathbb{R}^5$. Set
$X=Y\times\mathbb{C}^*$, with coordinates $(z,w)$, projection
$\pi\colon X\to Y$ and $t=\log|w|$.

\begin{lemma}\label{lem:topX}
$X$ is a Stein manifold of dimension $6$, and
\begin{equation}\label{eq:topX}
H_2(X,\mathbb{Z})\cong\mathbb{Q},\quad
\operatorname{Hom}(H_2(X,\mathbb{Z}),\mathbb{Z})=0,\quad
H^2(X,\mathbb{R})\cong\mathbb{R},\quad
\operatorname{Im}\bigl(H^2(X,\mathbb{Z})\to H^2(X,\mathbb{R})\bigr)=0 .
\end{equation}
Moreover, $H^2(X,\mathbb{Z})\cong\operatorname{Ext}(\mathbb{Q},\mathbb{Z})\neq0$.
\end{lemma}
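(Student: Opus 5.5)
Steinness is immediate: $Y$ is Stein by Lemma~\ref{lem:Y} and $\mathbb{C}^*$ is Stein, so $X=Y\times\mathbb{C}^*$ is a Stein manifold of complex dimension $5+1=6$. For the homology we use the Künneth formula for integral singular homology, which is valid for products of arbitrary spaces:
\[
H_2(X,\Z)\cong\bigoplus_{i+j=2}H_i(Y,\Z)\otimes H_j(\mathbb{C}^*,\Z)\ \oplus\bigoplus_{i+j=1}\operatorname{Tor}\bigl(H_i(Y,\Z),H_j(\mathbb{C}^*,\Z)\bigr).
\]
Here $\mathbb{C}^*\simeq S^1$ has $H_0\cong H_1\cong\Z$ and $H_j=0$ otherwise. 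Lemma~\ref{lem:Y} gives $H_0(Y,\Z)=\Z$ (since $Y\simeq M$ is connected), $H_1(Y,\Z)\cong\Q$ and $H_2(Y,\Z)=0$. The tensor terms are therefore $H_2(Y)\otimes\Z=0$, $H_1(Y)\otimes H_1(\mathbb{C}^*)\cong\Q$ and $H_0(Y)\otimes H_2(\mathbb{C}^*)=0$. The Tor terms all vanish because every group involved is torsion-free. Hence $H_2(X,\Z)\cong\Q$.

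By the same formula, $H_1(X,\Z)\cong H_1(Y)\oplus H_1(\mathbb{C}^*)\cong\Q\oplus\Z$, which we record for the universal coefficient computation below.

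Next, $\operatorname{Hom}(\Q,\Z)=0$. Indeed, if $f\colon\Q\to\Z$ is a homomorphism and $f(1)=a$, then $a=n f(1/n)$ is divisible by every $n\ge1$, so $a=0$. The same argument applied to each $p/q$ shows that $f$ vanishes identically. Proposition~\ref{p.UCT}(c) then gives $\operatorname{Im}(H^2(X,\Z)\to H^2(X,\R))=0$. Proposition~\ref{p.UCT}(a) gives $H^2(X,\R)\cong\operatorname{Hom}(\Q,\R)$. A homomorphism $\Q\to\R$ is $\Q$-linear, because $\R$ is uniquely divisible, so it is determined by its value at $1$, and every value is allowed. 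Hence $\operatorname{Hom}(\Q,\R)\cong\R$.

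For the last claim we apply \eqref{eq:UCT} with $A=\Z$ and $k=2$. Since $\operatorname{Hom}(H_2(X,\Z),\Z)=0$, we get
\[
H^2(X,\Z)\cong\operatorname{Ext}(H_1(X,\Z),\Z)=\operatorname{Ext}(\Q,\Z)\oplus\operatorname{Ext}(\Z,\Z)=\operatorname{Ext}(\Q,\Z),
\]
because $\Z$ is free and so $\operatorname{Ext}(\Z,\Z)=0$. It remains to show $\operatorname{Ext}(\Q,\Z)\neq0$, which is the only non-formal step. Apply $\operatorname{Hom}(\Q,-)$ to $0\to\Z\to\R\to\R/\Z\to0$. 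Since $\R$ is divisible, hence injective, $\operatorname{Ext}(\Q,\R)=0$, and the long exact sequence reads
\[
0\to\operatorname{Hom}(\Q,\Z)=0\to\operatorname{Hom}(\Q,\R)\to\operatorname{Hom}(\Q,\R/\Z)\to\operatorname{Ext}(\Q,\Z)\to0 .
\]
Thus $\operatorname{Ext}(\Q,\Z)\cong\operatorname{Hom}(\Q,\R/\Z)/\R$, and we must show that $\operatorname{Hom}(\Q,\R/\Z)$ is strictly larger than the image of $\R$. The map $\R\to\operatorname{Hom}(\Q,\R/\Z)$ sends $r$ to $q\mapsto qr \bmod \Z$. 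Its image consists of those homomorphisms which lift to $\R$ compatibly with multiplication on $\Q$.

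A homomorphism $g\colon\Q\to\R/\Z$ is determined by the compatible values $g(1/n!)$, and we build one that is not in this image. Write $\Q=\varinjlim\tfrac{1}{k!}\Z$ as in Lemma~\ref{lem:Y}. Choose $g(1/k!)=x_k\in\R/\Z$ with $(k+1)x_{k+1}=x_k$; at each stage there are $k+1$ choices. This gives $\operatorname{Hom}(\Q,\R/\Z)\cong\varprojlim(\R/\Z,\cdot(k+1))$, a solenoid, which has cardinality $2^{\aleph_0}$ but is compact and not isomorphic to $\R$ via the given map. The cleanest way to make this precise is to take $x_1=0$. Then the elements with $x_1=0$ form a subgroup $\varprojlim\Z/(k+1)!$ of the solenoid, which is isomorphic to the profinite completion $\widehat\Z$. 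The image of $\R$ meets this subgroup only in $\Z$: if $r\equiv0 \bmod \Z$ at the first stage, then $r\in\Z$. Since $\widehat\Z$ is uncountable, it is not contained in $\Z$, and so the quotient is nonzero. In fact it contains $\widehat\Z/\Z\neq0$, which completes the proof.
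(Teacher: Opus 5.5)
Your proof is correct and follows the same route as the paper. The paper likewise applies the K\"unneth formula with no $\operatorname{Tor}$ terms to get $H_1(X,\mathbb{Z})\cong\mathbb{Q}\oplus\mathbb{Z}$ and $H_2(X,\mathbb{Z})\cong\mathbb{Q}$, then uses $\operatorname{Hom}(\mathbb{Q},\mathbb{Z})=0$, Prop.~\ref{p.UCT}, and \eqref{eq:UCT} to obtain $H^2(X,\mathbb{Z})\cong\operatorname{Ext}(\mathbb{Q},\mathbb{Z})$. The only difference is that you prove $\operatorname{Ext}(\mathbb{Q},\mathbb{Z})\neq0$ yourself, via $\operatorname{Ext}(\mathbb{Q},\mathbb{Z})\cong\operatorname{Hom}(\mathbb{Q},\mathbb{R}/\mathbb{Z})/\mathbb{R}$ and the embedding of $\widehat{\mathbb{Z}}/\mathbb{Z}$, whereas the paper cites Col\c{t}oiu for this; your argument is valid and makes the lemma self-contained.
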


\begin{proof}
$X$ is Stein as a product of Stein manifolds. 
Since $H_*(\mathbb{C}^*,\mathbb{Z})\cong H_*(S^1,\mathbb{Z})$ is free, the
K\"unneth formula has no $\operatorname{Tor}$ terms and gives
\begin{align*}
H_1(X,\mathbb{Z})&\cong\bigl(H_1(Y)\otimes H_0(S^1)\bigr)\oplus\bigl(H_0(Y)\otimes H_1(S^1)\bigr)
\cong(\mathbb{Q}\otimes\mathbb{Z})\oplus(\mathbb{Z}\otimes\mathbb{Z})\cong\mathbb{Q}\oplus\mathbb{Z},\\
H_2(X,\mathbb{Z})&\cong\bigl(H_2(Y)\otimes H_0(S^1)\bigr)\oplus\bigl(H_1(Y)\otimes H_1(S^1)\bigr)\oplus\bigl(H_0(Y)\otimes H_2(S^1)\bigr)
\cong0\oplus(\mathbb{Q}\otimes\mathbb{Z})\oplus0\cong\mathbb{Q},
\end{align*}
where all homology groups are taken with coefficients in $\mathbb{Z}$.

The remaining statements follow from Prop.~\ref{p.UCT}. Indeed,
$\operatorname{Hom}(\mathbb{Q},\mathbb{Z})=0$: for a homomorphism $f$ and
$x\in\mathbb{Q}$, the integer $f(x)$ is divisible by every $n\ge1$, so
$f(x)=0$ (whereas $\operatorname{Hom}(\mathbb{Q},\mathbb{R})\cong\mathbb{R}$).
Hence, $H^2(X,\mathbb{R})\cong\mathbb{R}$ by Prop.~\ref{p.UCT}(a),
$\mathcal H_X=0$ by Prop.~\ref{p.UCT}(c), and
\[
H^2(X,\mathbb{Z})\cong\operatorname{Ext}(H_1(X,\mathbb{Z}),\mathbb{Z})
=\operatorname{Ext}(\mathbb{Q}\oplus\mathbb{Z},\mathbb{Z})=\operatorname{Ext}(\mathbb{Q},\mathbb{Z}),
\]
which is nonzero (it is a $\mathbb{Q}$-vector space of infinite dimension, see
\cite[Example~1]{Co}).
\end{proof}

So $X$ carries nontrivial holomorphic line bundles, since
$H^1(X,\mathcal{O}^*)\cong H^2(X,\mathbb{Z})\neq0$, but none of them is
detected by real cohomology.
Also, a Kähler form on $X$ is a Hodge form if and only if it is exact.

Let $\ell\colon H_1(Y,\mathbb{Z})\cong\mathbb{Q}\hookrightarrow\mathbb{R}$
be the inclusion, and let $a\in H_1(Y,\mathbb{Z})$ be the class with
$\ell(a)=1$. Since $H_0(Y,\mathbb{Z})$ is free, the $\operatorname{Ext}$ term
in~\eqref{eq:UCT} vanishes for $k=1$, so
$H^1(Y,\mathbb{C})\to\operatorname{Hom}(H_1(Y,\mathbb{Z}),\mathbb{C})$ is an
isomorphism; composed with the de Rham isomorphism, it is the period map. Hence,
the homomorphism $\ell$, regarded as $\mathbb{C}$-valued, is the period
homomorphism of a class in $H^1(Y,\mathbb{C})$.
Since $Y$ is Stein, every class in $H^1(Y,\mathbb{C})$ is represented by a closed
holomorphic $1$-form \cite{Serre}. Thus, there is a closed holomorphic $1$-form
$\beta$ on $Y$ whose periods are given by $\ell$:
\[
  \int_c\beta=\ell(c)\qquad\text{for every } c\in H_1(Y,\mathbb{Z}).
\]
Put $\alpha=\operatorname{Re}\beta$. Since $\ell$ is real-valued,
$\int_c\alpha=\ell(c)$ as well. Locally
$\beta=dG$ with $G$ holomorphic, and then $\alpha=d\operatorname{Re}G$. On
$X$ consider the closed holomorphic $1$-form
\begin{equation}\label{e.mu}
\mu=\tfrac12\Bigl(\frac{dw}{w}-\pi^*\beta\Bigr).
\end{equation}
Locally $\mu=\partial v$, where $v=\log|w|-\operatorname{Re}G\circ\pi$ is
pluriharmonic. Globally $2\operatorname{Re}\mu=dt-\pi^*\alpha$.
Summarizing, the data from this subsection that will be used below are the
injective homomorphism $\ell\colon H_1(Y,\mathbb{Z})\to\mathbb{R}$, the class
$a\in H_1(Y,\mathbb{Z})$, and the closed real $1$-form $\alpha$ on $Y$, related by
\begin{equation}\label{eq:ell-alpha}
  \ell(a)=1,\qquad \int_c\alpha=\ell(c)\quad\text{for all } c\in H_1(Y,\mathbb{Z}).
\end{equation}

\subsection{The curve and the torus}\label{ssec:curve}

\begin{lemma}\label{lem:curve}
There exist a smooth immersion
$\gamma\colon\mathbb{R}/2\mathbb{Z}\to U_M\subset Y$ such that
\begin{enumerate}
\item[(a)] $\gamma(s)=\gamma(s')$ with $s\ne s'$ if and only if
  $\{s,s'\}=\{0,1\}$, with $\gamma(0)=\gamma(1)=0$;
\item[(b)] the loops $\gamma|_{[0,1]}$ and $\gamma|_{[1,2]}$ represent $a$
  and $-a$ in $H_1(Y,\mathbb{Z})$;
\item[(c)] there is $r_\Gamma\in(0,\tfrac12)$ such that $\gamma(s)=se_1$ for
  $|s|<r_\Gamma$, $\gamma(s)=(s-1)e_2$ for $|s-1|<r_\Gamma$, and
  $|\gamma(s)|\ge r_\Gamma$ for all other $s\in\mathbb{R}/2\mathbb{Z}$.
\end{enumerate}
In particular, for every $r\in(0,r_\Gamma]$ the curve $\Gamma=\gamma(\mathbb{R}/2\mathbb{Z})$
satisfies $\Gamma\cap\mathbb{B}_r=(\mathbb{R}_{x_1}\cup\mathbb{R}_{x_2})\cap\mathbb{B}_r$.
\end{lemma}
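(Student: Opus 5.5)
We build $\gamma$ directly inside the open set $U_M\subset\mathbb{R}^5$, which is connected: after the translation it contains $0$, and it deformation retracts onto the connected telescope $M$. Since $U_M$ is a deformation retract of $Y$, the inclusion induces $H_1(U_M,\mathbb{Z})\cong H_1(Y,\mathbb{Z})$. Hence it suffices to produce a figure-eight in $U_M$ whose two lobes represent $a$ and $-a$ in $H_1(U_M,\mathbb{Z})$.

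First we fix the local picture. Choose $r_0>0$ with $\overline{\mathbb{B}_{2r_0}}\cap\mathbb{R}^5\subset U_M$. Pick a smooth embedded loop $c\colon[0,1]\to U_M$ representing $a$. Every class in $H_1$ of an open subset of $\mathbb{R}^5$ is represented by a closed curve, since it is Hurewicz-surjective from $\pi_1$ and $U_M$ is connected. This curve can be taken smooth by approximation and embedded by general position, because $\dim U_M=5\ge3$. Moving the base point along a path and conjugating does not change the homology class, so we may arrange the following: $c(s)=se_1$ for $s\in[0,r_0)$, and $c(s)=(s-1)e_2$ for $s\in(1-r_0,1]$. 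Thus the loop leaves $0$ along $e_1$ and returns along $-e_2$ direction, entering from $-e_2$. Outside these two segments we push $c$ off $\overline{\mathbb{B}_{r_0}}$; this is possible by a homotopy supported near the ball, again by general position in dimension $5$.

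For the second lobe, take a loop $c'\colon[1,2]\to U_M$ representing $-a$, for instance a parallel push-off of $c$ traversed backwards. We require $c'(s)=(s-1)e_2$ for $s\in[1,1+r_0)$ and $c'(s)=(s-2)e_1$ for $s\in(2-r_0,2]$, so it continues straight through $0$ in both directions. Concatenating gives a smooth immersion $\gamma$ of $\mathbb{R}/2\mathbb{Z}$. It is smooth at $s=1$ because both pieces equal $(s-1)e_2$ there, and similarly at $s=0\equiv2$.

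By a small generic perturbation of $c'$ away from $\overline{\mathbb{B}_{r_0}}$, the two lobes meet only at $0$, and each lobe is embedded. Two curves in $\mathbb{R}^5$ are generically disjoint, and the perturbation is homotopic to the original, so it preserves classes in $U_M$. This gives (a) and (b). For (c), set $r_\Gamma=\min\{r_0,1/4\}$. Then $\gamma(s)=se_1$ for $|s|<r_\Gamma$ and $\gamma(s)=(s-1)e_2$ for $|s-1|<r_\Gamma$. All other points have $|\gamma(s)|\ge r_0\ge r_\Gamma$: on the remaining parts of the straight segments this holds because $|s|$ or $|s-1|$ is at least $r_\Gamma$, and elsewhere because we pushed the curve off the ball. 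The final assertion, $\Gamma\cap\mathbb{B}_r=(\mathbb{R}_{x_1}\cup\mathbb{R}_{x_2})\cap\mathbb{B}_r$ for $r\le r_\Gamma$, is then immediate.

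The main obstacle is to make these general-position modifications consistent. Pushing the curves off the ball and making the lobes disjoint must not disturb the prescribed straight segments, and must keep the curves inside $U_M$ with their homology classes unchanged. This works because every modification is a small homotopy supported in compact subsets of $U_M\setminus\overline{\mathbb{B}_{r_0/2}}$. Transversality for $1$-manifolds in a $5$-manifold removes all intersections there. Inside $\mathbb{B}_{r_0}$ the curve is already the union of the two axis segments.
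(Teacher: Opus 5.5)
Your proposal is correct and follows essentially the same route as the paper. Both arguments use Hurewicz (via $U_M\simeq Y$) to get loops representing $a$ and $-a$, homotope them to run along $\pm e_1,\pm e_2$ near $0$ and stay outside a small ball otherwise, then smooth, apply general position in $\mathbb{R}^5$ so that each lobe is embedded and the lobes meet only at $0$, and concatenate. The one thing to tidy is the radius bookkeeping: your final perturbations may move points near the sphere $|x|=r_0$, so the claim $|\gamma(s)|\ge r_0$ can fail slightly. Taking $r_\Gamma$ somewhat smaller than $r_0$, e.g.\ $r_0/4$, with perturbations of size less than $r_0/4$ that fix $\overline{\mathbb{B}_{r_0/2}}$, fixes this.
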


\begin{proof}
Fix $r_\Gamma\in(0,\tfrac12)$ with $\{x\in\mathbb R^5:|x|\le r_\Gamma\}\subset U_M$.
Since $M$ is connected, so is $U_M$, and $U_M\hookrightarrow Y$ is a homotopy
equivalence; by the Hurewicz theorem there are loops in $U_M$ based at $0$
representing $a$ and $-a$. After a homotopy rel $0$ inside $U_M$, a smooth
approximation, and general position in $\mathbb R^5$ we obtain smooth regular loops 
$b_1,b_2\colon[0,1]\to U_M$ based at $0$, representing $a$ and $-a$, such that
\begin{enumerate}
\item[(1)] $b_1(s)=se_1$, $b_1(1-s)=-se_2$, $b_2(s)=se_2$,
  $b_2(1-s)=-se_1$ for $0\le s<r_\Gamma$, and $|b_j(s)|\ge r_\Gamma$ for
  $s\in[r_\Gamma,1-r_\Gamma]$;
\item[(2)] each $b_j$ is injective on $[0,1)$, and $b_1(s)\ne b_2(s')$
  unless $s,s'\in\{0,1\}$.
\end{enumerate}
Thus $b_1$ leaves $0$ along $+e_1$ and returns along the direction $e_2$ from the
side of $-e_2$, while $b_2$ leaves along $+e_2$ and returns along $e_1$ from
the side of $-e_1$. Define $\gamma(s)=b_1(s)$ for $s\in[0,1]$ and
$\gamma(s)=b_2(s-1)$ for $s\in[1,2]$. By (1),
$\gamma(s)=se_1$ for $|s|<r_\Gamma$ in $\mathbb R/2\mathbb Z$,
$\gamma(s)=(s-1)e_2$ for $|s-1|<r_\Gamma$, and $|\gamma(s)|\ge r_\Gamma$
otherwise, so $\gamma$ is a smooth immersion and (c) holds. Property (a)
follows from (1) and (2), and (b) holds by construction. Finally, if
$0<r\le r_\Gamma$, the points $\gamma(s)$ with $|s|<r_\Gamma$ or $|s-1|<r_\Gamma$
lie on $\mathbb{R}_{x_1}\cup\mathbb{R}_{x_2}$, and all other points of
$\Gamma$ lie outside $\mathbb{B}_r$; this gives the last assertion.
\end{proof}

By Lemma~\ref{lem:curve}(b) and \eqref{eq:ell-alpha},
$\int_\gamma\alpha=\int_{\mathbb{R}/2\mathbb{Z}}\gamma^*\alpha=\ell(a)+\ell(-a)=0$.
Hence, there is a smooth function
$\tau\colon\mathbb{R}/2\mathbb{Z}\to\mathbb{R}$ with
$d\tau=\gamma^*\alpha$ and $\tau(0)=0$. Then, again by
Lemma~\ref{lem:curve}(b) and \eqref{eq:ell-alpha},
$\tau(1)=\int_{\gamma|_{[0,1]}}\alpha=\ell(a)=1$.
Define
\[
\Lambda=\bigl\{\bigl(\gamma(s),e^{\tau(s)+i\theta}\bigr)\in Y \times \C^* :\ s\in\mathbb{R}/2\mathbb{Z},\ \theta\in\mathbb{R}/2\pi\mathbb{Z}\bigr\} .
\]
By Lemma~\ref{lem:curve}(a) and $\tau(1)\neq\tau(0)$, the map
$(s,\theta)\mapsto(\gamma(s),e^{\tau(s)+i\theta})$ is an injective
immersion. Thus, $\Lambda \subset X$ is an embedded smooth torus, and it is easy to see that it is totally
real. Further,
\[
\Lambda\cap\pi^{-1}(0)=C_0\cup C_1,\qquad C_0=\{0\}\times\{|w|=1\},\quad C_1=\{0\}\times\{|w|=e\}.
\]

\begin{figure}[ht]
  \centering
  \includegraphics[width=\textwidth]{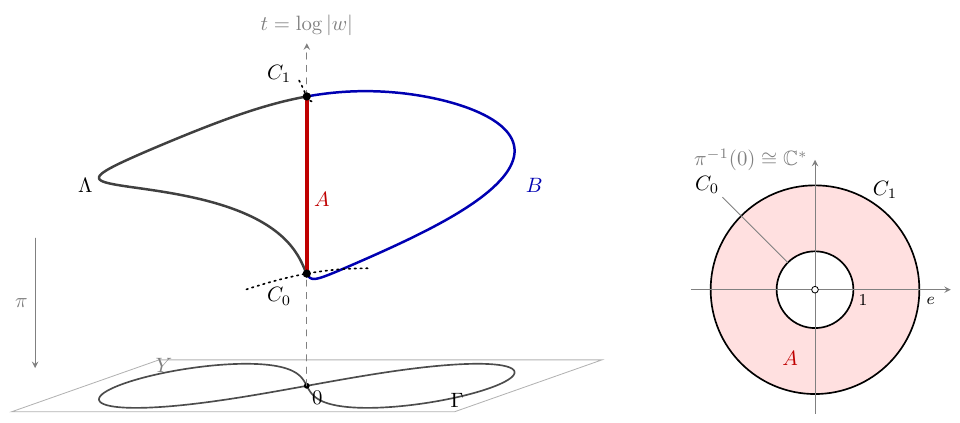}
  \caption{Schematic picture of $\Lambda\subset X$, with $\arg w$ suppressed.
  Left: $\Lambda$ over the figure-eight $\Gamma$; dotted: the extra zero set of
  $h_0$. Right: the fibre $\pi^{-1}(0)$, where $C_0,C_1$ bound the annulus $A$.}
  \label{fig:lambda}
\end{figure}

\subsection{A global Kähler form and a first local potential}\label{ssec:potential}

Let $r_\Gamma$ be as in Lemma~\ref{lem:curve}, and fix $r_B\in(0,r_\Gamma]$
with $\mathbb{B}_{r_B}\subset Y$. By Lemma~\ref{lem:curve},
\begin{equation}\label{eq:Gamma-near-0}
\Gamma\cap\mathbb{B}_{r_B}=(\mathbb{R}_{x_1}\cup\mathbb{R}_{x_2})\cap\mathbb{B}_{r_B}.
\end{equation}
Since $\mathbb{B}_{r_B}$ is simply
connected, there is a holomorphic function $G$ on $\mathbb{B}_{r_B}$ with $dG=\beta$ and
$G(0)=0$. Whenever $\gamma(s)\in\mathbb{B}_{r_B}$, that is, for $|s|<r_B$ or
$|s-1|<r_B$ by Lemma~\ref{lem:curve}(c), we have
$d(\operatorname{Re}G\circ\gamma)=\gamma^*\alpha=d\tau$. Since
$\gamma(0)=\gamma(1)=0$, $G(0)=\tau(0)=0$ and $\tau(1)=1$, it follows that
\begin{equation}\label{eq:G-tau}
\operatorname{Re}G(\gamma(s))=\tau(s)\ \text{ for } |s|<r_B,
\qquad
\operatorname{Re}G(\gamma(s))=\tau(s)-1\ \text{ for } |s-1|<r_B .
\end{equation}
For $x=(x_1,\dots,x_5)\in\mathbb{R}^5$ put
\begin{equation}\label{e.F0}
F_0(x)=(x_1x_2)^2+x_3^2+x_4^2+x_5^2 .
\end{equation}
Then $F_0\ge0$ and $F_0^{-1}(0)=\mathbb{R}_{x_1}\cup\mathbb{R}_{x_2}$, so by \eqref{eq:Gamma-near-0}
$F_0^{-1}(0)\cap\mathbb{B}_{r_B}=\Gamma\cap\mathbb{B}_{r_B}$. At a point
$(x_1,0,0,0,0)$ with $x_1\ne0$ the Hessian of $F_0$ is
$\operatorname{diag}(0,2x_1^2,2,2,2)$; it is positive semidefinite with
kernel $\mathbb{R}_{x_1}=T\Gamma$. The same holds on $\mathbb{R}_{x_2}$ after interchanging
$x_1$ and $x_2$.

\begin{lemma}\label{lem:F}
There are a function $F\in C_c^\infty(\mathbb{R}^5)$ with $F\ge0$, an open
neighbourhood $N$ of $\Gamma$ in $\mathbb{R}^5$ and $r_3>0$ such that
\begin{enumerate}
\item[(a)] $F^{-1}(0)\cap N=\Gamma$;
\item[(b)] $\ker\operatorname{Hess}_zF=T_z\Gamma$ for every $z\in\Gamma\setminus\{0\}$;
\item[(c)] $F=F_0$ on $\mathbb{B}_{r_3}\cap\mathbb{R}^5$.
\end{enumerate}
\end{lemma}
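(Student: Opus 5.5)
We build $F$ by gluing the explicit model $F_0$ near the double point with a squared-distance-type function along the rest of $\Gamma$, and then cutting off to obtain compact support. By Lemma~\ref{lem:curve}, $\Gamma$ is a compact immersed curve in $\mathbb R^5$. Away from $0$ it is an embedded smooth curve: it is embedded off the double point, and near $0$ it consists of the two segments of \eqref{eq:Gamma-near-0}.

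\emph{Construction of the pieces.} Set $\Gamma^*=\Gamma\setminus\mathbb B_{r_B/4}$, a compact embedded curve with boundary. Choose a tubular neighbourhood $N_1$ of a slightly larger open piece of $\Gamma$ containing $\Gamma^*$, and let $F_1=\operatorname{dist}(\cdot,\Gamma)^2$ on $N_1$. This function is smooth for $N_1$ thin. It vanishes exactly on $\Gamma$, and at points of $\Gamma$ its Hessian is twice the orthogonal projection onto the normal space, so its kernel is exactly $T\Gamma$. Near $0$ we use $F_0$ from \eqref{e.F0}, which vanishes on $\mathbb B_{r_B}\cap\mathbb R^5$ exactly along $\Gamma$. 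At points of $\Gamma$ with $x_1\neq0$ its Hessian is $\operatorname{diag}(0,2x_1^2,2,2,2)$, whose kernel is $T\Gamma$; the same holds on the $x_2$-axis.

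\emph{Gluing.} Take a cutoff $\chi\in C_c^\infty(\mathbb B_{r_B/2})$ with $0\le\chi\le1$ and $\chi\equiv1$ on $\mathbb B_{r_B/4}$. Put $\tilde F=\chi F_0+(1-\chi)F_1$ on $N:=(\mathbb B_{r_B/2}\cap\mathbb R^5\cap N_0)\cup N_1$, where $N_0$ is a small neighbourhood of $\Gamma\cap\mathbb B_{r_B}$ in the ball. We shrink $N_1$ so that it meets $\mathbb B_{r_B/2}$ only near the two axis segments; this requires $N_1$ to be chosen thin enough so that the parts near the two segments are disjoint outside $\mathbb B_{r_B/4}$. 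Both $F_0$ and $F_1$ are $\ge0$ and vanish exactly on $\Gamma$ where defined, so a convex combination of them has zero set exactly $\Gamma$ in $N$, which gives (a).

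At a point $z\in\Gamma\setminus\{0\}$ where both functions are defined, each vanishes to second order at $z$. Hence the product-rule terms involving $d\chi$ drop out, and $\operatorname{Hess}_z\tilde F=\chi\operatorname{Hess}_zF_0+(1-\chi)\operatorname{Hess}_zF_1$. This is a convex combination of positive semidefinite forms with common kernel $T_z\Gamma$, so its kernel is exactly $T_z\Gamma$, which gives (b). Property (c) holds with $r_3=r_B/4$.

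\emph{Extension and the main obstacle.} It remains to extend $\tilde F$ from a neighbourhood of $\Gamma$ to a compactly supported smooth $F\ge0$ on $\mathbb R^5$. Choose $\eta\in C_c^\infty(N)$ with $\eta\equiv1$ on a smaller neighbourhood $N'$ of $\Gamma$, and set $F=\eta\tilde F$; this is smooth, nonnegative and compactly supported. Replacing $N$ by $N'$, and $r_3$ by a smaller value so that $\mathbb B_{r_3}\cap\mathbb R^5\subset N'$, gives (a)--(c).

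The only delicate point is the domain bookkeeping in the gluing region $r_B/4\le|x|\le r_B/2$. There we must make sure that both $F_0$ and $F_1$ are defined on the relevant part of $N$, and that $F_0$ has no extra zeros there. Its zero set is $\mathbb R_{x_1}\cup\mathbb R_{x_2}$, which coincides with $\Gamma$ in $\mathbb B_{r_B}$, so this is satisfied once $N$ is thin enough.
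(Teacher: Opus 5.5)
Your proposal is correct and follows essentially the same route as the paper. You glue $F_0$ near the double point to a squared distance function along the rest of $\Gamma$ with a radial cutoff, note that the cutoff-derivative terms in the Hessian drop out because both functions vanish to second order on $\Gamma$ (so the Hessian is a convex combination of positive semidefinite forms with common kernel $T_z\Gamma$), and finish with a second cutoff to get compact support. The differences are cosmetic: the paper uses $\operatorname{dist}(\cdot,\Gamma\setminus\mathbb{B}_{r_1})^2$ rather than $\operatorname{dist}(\cdot,\Gamma)^2$, and it puts the whole ball $\mathbb{B}_{r_2}\cap\mathbb{R}^5$ (where the cutoff is $\equiv1$) into the domain, which removes the domain bookkeeping you flag in the gluing region.
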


\begin{proof}
We first construct $F$. Fix $0<r_1<r_2<r_B$ and $\chi_0\in C_c^\infty(\mathbb{B}_{r_B}\cap\mathbb{R}^5,[0,1])$
with $\chi_0=1$ on $\mathbb{B}_{r_2}\cap\mathbb{R}^5$. Let $\Gamma'=\Gamma\setminus\mathbb{B}_{r_1}$ and
$\delta=\operatorname{dist}(\cdot,\Gamma')$. Away from $0$ the curve $\Gamma$
is an embedded curve, and $\Gamma\setminus\mathbb{B}_{r_2}$ is a compact subset of
the interior of the embedded curve $\Gamma'$. Hence, $\delta^2$ is smooth on a
neighbourhood $O$ of $\Gamma\setminus\mathbb{B}_{r_2}$, and there it is
nonnegative, it vanishes exactly on $\Gamma'$, and $\operatorname{Hess}_z\delta^2$ is twice the
orthogonal projection onto the normal space of $\Gamma$ at $z$, with kernel
$T_z\Gamma$. On the neighbourhood $N_1=O\cup(\mathbb{B}_{r_2}\cap\mathbb{R}^5)$ of $\Gamma$ put
\[
F_{\rm loc}=\chi_0F_0+(1-\chi_0)\delta^2 .
\]
This is clearly smooth and nonnegative. Further,  $F_{\rm loc}^{-1}(0)=\Gamma$ in $N_1$.

Let $z\in\Gamma\setminus\{0\}$. There are three cases. If $z\in\mathbb{B}_{r_2}$, then
$F_{\rm loc}=F_0$ near $z$, and $\ker\operatorname{Hess}_zF_{\rm loc}=T_z\Gamma$ by the
computation of $\operatorname{Hess}F_0$ above. If $z\notin\mathbb{B}_{r_B}$, then $z\notin\operatorname{supp}\chi_0$,
so $F_{\rm loc}=\delta^2$ near $z$ and the claim holds for $\delta^2$. In the
remaining case $z\in\mathbb{B}_{r_B}\setminus\mathbb{B}_{r_2}$, the functions $F_0$ and
$\delta^2$ are smooth near $z$, nonnegative, and vanish at $z$ (by
\eqref{eq:Gamma-near-0} and $z\in\Gamma'$). Further,  both have zero differential at $z$, and in particular, $g=F_0-\delta^2$ satisfies
$g(z)=0$ and $dg(z)=0$, and the terms involving derivatives of $\chi_0$ in
$\operatorname{Hess}_z(\chi_0 g)$ vanish. Since
$F_{\rm loc}=\delta^2+\chi_0(F_0-\delta^2)$, we get $dF_{\rm loc}(z)=0$ and
\[
\operatorname{Hess}_zF_{\rm loc}=\chi_0(z)\operatorname{Hess}_zF_0+(1-\chi_0(z))\operatorname{Hess}_z\delta^2 .
\]
Both Hessians are positive semidefinite with kernel $T_z\Gamma$, and the coefficients
are nonnegative with sum $1$, so $\ker\operatorname{Hess}_zF_{\rm loc}=T_z\Gamma$.

Finally, choose an open neighbourhood $N\Subset N_1$ of $\Gamma$, and $\theta\in C_c^\infty(N_1,[0,1])$ with
$\theta=1$ on $N$, and put $F=\theta F_{\rm loc}$, extended by zero. Then (a) and
(b) hold, because $F=F_{\rm loc}$ on $N$. Choose $r_3\le r_2$ with
$\mathbb{B}_{r_3}\cap\mathbb{R}^5\subset N$. On $\mathbb{B}_{r_3}\cap\mathbb{R}^5$ we have $\theta=\chi_0=1$,
so $F=F_0$, which shows (c).
\end{proof}

From now on $F$, $N$ and $r_3$ are as in Lemma~\ref{lem:F}, and we fix
$\sigma\ge1$ with $2\sigma>\sup\|\operatorname{Hess}F\|$. Recall that
$\pi\colon X\to Y$ is the projection and $z=x+iy$.

\begin{lemma}\label{lem:rho}
Define
\begin{equation}\label{e.rho}
\rho(z,w)=\sigma\,|\operatorname{Im}z|^2+F(\operatorname{Re}z),\qquad (z,w)\in X=Y\times\mathbb{C}^* ,
\end{equation}
and let $V=\{z\in Y:\operatorname{Re}z\in N\}$, a neighbourhood of $\Gamma$ in $Y$.
Then $\rho$ is a smooth, nonnegative, plurisubharmonic function on $X$, constant
along the fibres of $\pi$, and the following hold.
\begin{enumerate}
\item[(a)] For $q\in X$ and $\xi\in T_qX$ we have $dd^c\rho(\xi,J\xi)>0$ if
  $\pi_*\xi\ne0$, and $dd^c\rho(\xi,\cdot)=0$ if $\pi_*\xi=0$. Thus $\rho$ is not
  strictly plurisubharmonic, and $dd^c\rho$ degenerates exactly in the vertical directions.
\item[(b)] $\{\rho=0\}\cap\pi^{-1}(V)=\pi^{-1}(\Gamma)$, and $d\rho=0$ on $\pi^{-1}(\Gamma)$.
\item[(c)] For $q\in\pi^{-1}(\Gamma\setminus\{0\})$ the kernel of
  $\operatorname{Hess}_q\rho$ is $\{\xi\in T_qX:\ \pi_*\xi\in T_{\pi(q)}\Gamma\}$.
\item[(d)] $\rho(z,w)=\sigma|y|^2+F_0(x)$ for every $(z,w)\in X$ with $z=x+iy$
  and $|x|<r_3$.
\end{enumerate}
\end{lemma}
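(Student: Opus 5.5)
The plan is to compute everything in the coordinates $(z,w)$ with $z=x+iy$, using that $\rho=\pi^*u$ for the function $u(z)=\sigma|y|^2+F(x)$ on $Y$. Smoothness and nonnegativity are immediate, since $F\in C_c^\infty(\mathbb R^5)$ and $F\ge0$, and $\rho$ does not depend on $w$. For plurisubharmonicity and (a), note that $dd^c\rho=\pi^*dd^cu$ because $\pi$ is holomorphic. By \eqref{eq:levi-example} applied to $u$ on $\mathbb C^5$,
\[
L_u(z;\xi')\ \ge\ \tfrac14\bigl(2\sigma-\|\operatorname{Hess}_xF\|\bigr)|\xi'|^2 ,
\]
which is $>0$ for $\xi'\ne0$ by the choice $2\sigma>\sup\|\operatorname{Hess}F\|$. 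By \eqref{eq:levi}, $dd^c\rho(\xi,J\xi)=dd^cu(\pi_*\xi,J\pi_*\xi)=L_u(z;\pi_*\xi)$, since $\pi_*$ commutes with $J$. This is positive when $\pi_*\xi\ne0$. If $\pi_*\xi=0$, then $dd^c\rho(\xi,\cdot)=dd^cu(\pi_*\xi,\pi_*\cdot)=0$. In particular $\rho$ is plurisubharmonic but not strictly plurisubharmonic.

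For (b), first take a point of $\pi^{-1}(V)$. There $\rho=0$ if and only if $y=0$ and $F(x)=0$, with $x\in N$. By Lemma~\ref{lem:F}(a) this means $x\in\Gamma$, so $z=x\in\Gamma\subset\mathbb R^5$. Conversely, $\Gamma\subset N\cap\mathbb R^5$ gives $\pi^{-1}(\Gamma)\subset\pi^{-1}(V)\cap\{\rho=0\}$. Since $\rho\ge0$ is smooth, it has a minimum on its zero set, so $d\rho=0$ on $\pi^{-1}(\Gamma)$.

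For (c), take $q\in\pi^{-1}(\Gamma\setminus\{0\})$. Since $d\rho(q)=0$, the Hessian is intrinsic. In the coordinates $(x,y,w)$ it is block diagonal with blocks $\operatorname{Hess}_xF$, $2\sigma\,\mathrm{Id}$ and $0$, because $\rho$ is a sum of a function of $x$ and a function of $y$ and does not depend on $w$. Its kernel is therefore $\ker\operatorname{Hess}_xF\oplus\{0\}\oplus T\mathbb C^*$. By Lemma~\ref{lem:F}(b), $\ker\operatorname{Hess}_xF=T_x\Gamma$. This is exactly the set of $\xi$ with $\pi_*\xi\in T_{\pi(q)}\Gamma$, where $T\Gamma\subset\mathbb R^5$ is identified with the real directions. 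Finally, (d) follows from Lemma~\ref{lem:F}(c): if $|x|<r_3$, then $x\in\mathbb B_{r_3}\cap\mathbb R^5$, so $F(x)=F_0(x)$.

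No step is a real obstacle; the only care needed is in (c). One has to check that the identification of $T_{\pi(q)}\Gamma$ inside $T_zY=\mathbb C^5$ matches the $x$-block. This holds because $\Gamma\subset\mathbb R^5$ and the coordinates $x$ are tangent to $\mathbb R^5$.
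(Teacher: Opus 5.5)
Your proposal is correct and follows the paper's proof essentially step for step. You write $\rho=u\circ\pi$ and use $dd^c\rho=\pi^*dd^cu$ with the Levi-form bound \eqref{eq:levi-example} for (a), Lemma~\ref{lem:F}(a) and $\rho\ge0$ for (b), the block-diagonal Hessian and Lemma~\ref{lem:F}(b) for (c), and Lemma~\ref{lem:F}(c) for (d).
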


\begin{proof}
Put $\rho_Y(z)=\sigma\,|\operatorname{Im}z|^2+F(\operatorname{Re}z)$ for
$z\in\mathbb{C}^5$, so that $\rho=\rho_Y\circ\pi$. Clearly $\rho\ge0$ is smooth
and constant along the fibres of $\pi$. By~\eqref{eq:levi-example}, the Levi form
of $\rho_Y$ is at least $\frac14(2\sigma-\sup\|\operatorname{Hess}F\|)|\xi|^2>0$,
so $\rho_Y$ is strictly plurisubharmonic on $\mathbb{C}^5$.

(a) Since $\pi$ is holomorphic, $dd^c\rho=\pi^*dd^c\rho_Y$ and $\pi_*J=J\pi_*$.
Hence,
\[
dd^c\rho(\xi,J\xi)=dd^c\rho_Y(\pi_*\xi,J\pi_*\xi),
\]
which is positive if $\pi_*\xi\neq0$, and $dd^c\rho(\xi,\cdot)=0$ if $\pi_*\xi=0$. In particular $\rho$
is plurisubharmonic.

(b) We have $\rho(z,w)=0$ if and only if $\operatorname{Im}z=0$ and
$F(\operatorname{Re}z)=0$. For $z\in V$ this means $z\in\mathbb{R}^5$ and
$z\in F^{-1}(0)\cap N=\Gamma$, by Lemma~\ref{lem:F}(a). Since $\rho\ge0$
vanishes on $\pi^{-1}(\Gamma)$, $d\rho=0$ there.

(c) Let $q=(z,w)$ with $z\in\Gamma\setminus\{0\}$. In the product coordinates
$(z,w)$ the map $\pi$ is a linear projection, so
$\operatorname{Hess}_q\rho(\xi,\xi)=\operatorname{Hess}_z\rho_Y(\pi_*\xi,\pi_*\xi)$.
Since $z\in\mathbb{R}^5$, the Hessian of $\rho_Y$ at $z$ is the direct sum of
$2\sigma|\cdot|^2$ on $i\mathbb{R}^5$ and $\operatorname{Hess}_zF$ on
$\mathbb{R}^5$. By Lemma~\ref{lem:F}(b) its kernel is $T_z\Gamma$, and (c) follows.

(d) follows from Lemma~\ref{lem:F}(c).
\end{proof}

Define
\[
\widehat\omega=dd^c\rho+i\,\mu\wedge\bar\mu ,
\]
where $\mu$ is defined as in~\eqref{e.mu}.
The form $\widehat\omega$ is closed, because $\mu$ is closed and holomorphic. It is a
Kähler form on $X$ because $i\,\mu\wedge\bar\mu$ is positive on vertical vectors
(where $\mu=\frac{dw}{2w}$), that is, exactly in the directions where
$dd^c\rho$ vanishes by Lemma~\ref{lem:rho}(a).

Let $W\subset X$ be a tubular neighbourhood of $\Lambda$ that retracts onto
$\Lambda$. The closed form $2\operatorname{Re}\mu=dt-\pi^*\alpha$ has zero
period on the fibre circle $\{\theta\mapsto(\gamma(0),e^{i\theta})\}$.
Since $d\tau=\gamma^*\alpha$, it also has zero period on the loop $s\mapsto(\gamma(s),e^{\tau(s)})$,
and so it is exact on $W$. Therefore, there is a pluriharmonic
function $v$ on $W$ with $\partial v=\mu$, $dv=dt-\pi^*\alpha$ and
$v|_\Lambda=0$. Indeed, first observe that $\bar\partial\partial v=\bar\partial\mu=0 $, which gives
pluriharmonicity of $v$;  further, along $\Lambda$, parametrized by $(s,\theta)\mapsto(\gamma(s),e^{\tau(s)+i\theta})$, the form $dv=dt-\pi^*\alpha$ pulls back to $d\tau-\gamma^*\alpha=0$, so $v|_\Lambda$ is constant since $\Lambda$ is connected;
as $v$ is determined only up to an additive constant, we can normalize it so that $v|_\Lambda\equiv0$.

 Put
\[
h_0=\rho+v^2\quad\text{on }W, \qquad dd^c h_0=dd^c\rho+i\,\partial v\wedge\bar\partial v=\widehat\omega ,
\]
where the second identity follows from~\eqref{eq:ddc-square}.
\begin{lemma}\label{lem:h0}
On a neighbourhood of $\Lambda$ the function $h_0$ satisfies the following.
\begin{enumerate}
\item[(a)] $h_0\ge0$, $h_0|_\Lambda=0$ and $dh_0|_\Lambda=0$.
\item[(b)] For $q\in\Lambda\setminus(C_0\cup C_1)$,
  $\operatorname{Hess}_qh_0$ is positive definite on $T_qX/T_q\Lambda$.
\item[(c)] On $\pi^{-1}(\mathbb{B}_{r_B})$ write $z=x+iy$ and consider the
  smooth real function
  \[
  r=t-\operatorname{Re}G(z)=\log|w|-\operatorname{Re}G(z),
  \]
  where $G$ is the function of~\eqref{eq:G-tau}. Near $C_0$ we have
  $\rho=\sigma|y|^2+F_0(x)$ by Lemma~\ref{lem:rho}(d), and
  \[
  \Lambda=\{x_2=x_3=x_4=x_5=0,\ y=0,\ r=0\},\qquad h_0=\rho+r^2 .
  \]
  At a point $q\in\Lambda$ near $C_0$, with $x_1=x_1(q)$,
  \begin{equation}\label{eq:hess-h0}
  \operatorname{Hess}_qh_0=2\sigma\sum_{j=1}^5dy_j^2+2x_1^2\,dx_2^2+2\sum_{j=3}^5dx_j^2+2\,dr^2 ,
  \end{equation}
  where $dy_j^2=dy_j\otimes dy_j$, etc., and all differentials are taken at $q$.
  The ten covectors $dy_1,\dots,dy_5,dx_2,\dots,dx_5,dr$ at $q$ are linearly
  independent and vanish on $T_q\Lambda$. The same holds near $C_1$ after
  interchanging $z_1\leftrightarrow z_2$ and replacing $G$ by $G+1$, that is,
  $r$ by $r-1$.
\end{enumerate}
\end{lemma}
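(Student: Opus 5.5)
We verify the three parts directly from the construction of $h_0=\rho+v^2$, using Lemma~\ref{lem:rho} and the properties of $v$.

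\emph{Part (a).} Nonnegativity is clear, since $\rho\ge0$ and $v^2\ge0$. On $\Lambda$ we have $\rho=0$, because $\Lambda\subset\pi^{-1}(\Gamma)$ (Lemma~\ref{lem:rho}(b)), and $v=0$ by the normalization of $v$. Hence $h_0|_\Lambda=0$. Since $h_0\ge0$ attains its minimum along $\Lambda$, also $dh_0|_\Lambda=0$. One could check this directly as well: $d\rho=0$ on $\pi^{-1}(\Gamma)$, and $d(v^2)=2v\,dv=0$ where $v=0$.

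\emph{Part (b).} At $q\in\Lambda$ both $\rho$ and $v$ vanish to the orders used above, so the Hessian is chart independent and
\[
\operatorname{Hess}_qh_0=\operatorname{Hess}_q\rho+2\,dv_q\otimes dv_q .
\]
Both terms are positive semidefinite. Let $q\notin C_0\cup C_1$, so $\pi(q)\in\Gamma\setminus\{0\}$. Suppose $\xi$ lies in the kernel of the sum. By Lemma~\ref{lem:rho}(c), $\pi_*\xi\in T_{\pi(q)}\Gamma$, and moreover $dv(\xi)=0$. The space $\{\pi_*\xi\in T\Gamma\}$ has real dimension $3$: it is spanned by the lift of $\gamma'(s)$ together with the two vertical directions $\partial_t$ and $\partial_\theta$. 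In this space $T_q\Lambda$ is the $2$-plane spanned by $\gamma'+\tau'\partial_t$ and $\partial_\theta$. On this $3$-space we compute $dv=dt-\pi^*\alpha$: it vanishes on $T_q\Lambda$ and takes the value $1$ on $\partial_t$. So $dv$ restricted there is a nonzero functional whose kernel is exactly $T_q\Lambda$. Therefore the kernel of the Hessian is $T_q\Lambda$, and the Hessian is positive definite on $T_qX/T_q\Lambda$.

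\emph{Part (c).} Near $C_0$, with $|s|<r_B$, the point $\gamma(s)=se_1$ lies on the $x_1$-axis. By \eqref{eq:G-tau} the condition $t=\tau(s)$ becomes $r=0$, which gives the stated description of $\Lambda$. Next, on $\pi^{-1}(\mathbb B_{r_B})$ we have $dr=dt-\operatorname{Re}dG=dt-\pi^*\alpha=dv$, and both $r$ and $v$ vanish on $\Lambda\cap\pi^{-1}(\mathbb B_{r_B})$ near $C_0$. By connectedness of the relevant component, $v=r$ there, so $h_0=\rho+r^2$.

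We then compute the Hessian. By Lemma~\ref{lem:rho}(d), $\rho=\sigma|y|^2+F_0(x)$, and $\operatorname{Hess}F_0=\operatorname{diag}(0,2x_1^2,2,2,2)$ at $(x_1,0,0,0,0)$. Adding $\operatorname{Hess}(r^2)=2\,dr^2$ at points where $r=0$ gives \eqref{eq:hess-h0}.

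For linear independence, note that $dr=dt-\operatorname{Re}dG$ involves $dt$, while the other nine covectors are pulled back from $\mathbb C^5$ and are clearly independent there. Each of the ten covectors annihilates $T_q\Lambda$, which is spanned by $e_1+\tau'\partial_t$ and $\partial_\theta$. For $dr$ this follows from \eqref{eq:G-tau}. For the others it follows because they do not involve $x_1$ or $w$.

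Near $C_1$ the same argument applies after swapping $z_1$ and $z_2$. There the second case of \eqref{eq:G-tau} gives $\operatorname{Re}G=\tau-1$, so the function vanishing on $\Lambda$ is $r-1$, and $v=r-1$.

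The only delicate step is the identification $v=r$ (respectively $v=r-1$) near $C_0$ (respectively $C_1$). It requires checking that on each local sheet the additive constants match. This follows from $G(0)=0$, $\tau(0)=0$, $\tau(1)=1$, and $v|_\Lambda=0$, since each sheet near $C_j$ meets $\Lambda$.
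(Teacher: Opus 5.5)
Your proposal is correct and follows the paper's proof essentially step by step. Part (a) comes from nonnegativity of both summands; part (b) intersects the kernel of $\operatorname{Hess}_q\rho$ (the $3$-dimensional space from Lemma~\ref{lem:rho}(c)) with $\ker dv$; part (c) identifies $v=r$ (resp.\ $v=r-1$) because they have the same differential and both vanish on $\Lambda$, and then reads off~\eqref{eq:hess-h0}. Your explicit spanning vectors for that $3$-space and for $T_q\Lambda$, and the check $dv(\partial_t)=1$, simply spell out what the paper states more briefly.
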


\begin{proof}
(a) holds because both summands of $h_0$ are nonnegative and vanish on
$\Lambda$. For (b), at $q\in\Lambda$ we have
$\operatorname{Hess}_qh_0=\operatorname{Hess}_q\rho+2\,dv\otimes dv$.
Both terms are positive semidefinite, so $\xi$ lies in the kernel of
$\operatorname{Hess}_qh_0$ if and only if it lies in the kernels of both. If
$q\notin C_0\cup C_1$, then $\pi(q)\in\Gamma\setminus\{0\}$, and by
Lemma~\ref{lem:rho}(c) this means that $\pi_*\xi\in T_{\pi(q)}\Gamma$ and $dv(\xi)=0$. 
The vectors $\xi$ with $\pi_*\xi\in T_{\pi(q)}\Gamma$ form a $3$-dimensional
space $E$ containing the vertical space $\ker\pi_*$. Since $\pi^*\alpha$
vanishes on vertical vectors, $dv=dt$ on $\ker\pi_*$, so $dv|_E\neq0$, and
the kernel of $\operatorname{Hess}_qh_0$ is the $2$-dimensional space
$E\cap\ker dv$. It contains $T_q\Lambda$, because $v|_\Lambda=0$, hence
equals $T_q\Lambda$.

For (c), the points of $\Lambda$ over the branch
$s\approx0$ are $(\gamma(s),e^{\tau(s)+i\theta})$ with $\gamma(s)=(s,0,0,0,0)$
and, by~\eqref{eq:G-tau}, $r=\tau(s)-\operatorname{Re}G(\gamma(s))=0$; over the
branch $s\approx1$ we have $r=1$. This gives the description of $\Lambda$ near
$C_0$, where we have $v=r$, because both have the
same differential $dt-\pi^*\alpha$ (recall $d\operatorname{Re}G=\alpha$ on
$\mathbb{B}_{r_B}$), and vanish on $\Lambda$. Hence, $h_0=\rho+r^2$ there, with
$\rho=\sigma|y|^2+F_0(x)$ by Lemma~\ref{lem:rho}(d). Since $r(q)=0$ and
$F_0$ has Hessian $2x_1^2\,dx_2^2+2\sum_{j\ge3}dx_j^2$ at $(x_1,0,0,0,0)$, this
gives~\eqref{eq:hess-h0}. The covectors $dy_j$, $dx_2,\dots,dx_5$ and
$dr=dt-d\operatorname{Re}G$ are linearly independent, because $dt$ is
independent of $dx,dy$, and they vanish on $T_q\Lambda$, which is spanned by
$\partial_{x_1}+\partial_{x_1}\!\operatorname{Re}G\,\partial_t$ and the
rotation vector field of the fibre. Near $C_1$, use~\eqref{eq:G-tau} for $s$ near $1$.
\end{proof}

Thus the only degeneracy of $h_0$ sits in the direction $\partial_{x_2}$
along $C_0$ (respectively $\partial_{x_1}$ along $C_1$). Indeed,
$\{h_0=0\}$ contains, besides $\Lambda$, an additional locus
$\{z_1=z_3=z_4=z_5=0,\ z_2\in\mathbb{R},\ r=0\}$ near $C_0$. The goal of the next subsection
is to modify $h_0$ so that it vanishes precisely on $\Lambda$.

\subsection{Correcting the Hessian}\label{ssec:correction}

We remove this degeneracy by adding to $h_0$ small multiples of the
pluriharmonic functions $\operatorname{Re}z_2^2=x_2^2-y_2^2$ near $C_0$
and $\operatorname{Re}z_1^2$ near $C_1$, cut off away from these circles.

Fix $r_0>0$, and let
\[
N_0=\bigl\{|x_1|<3r_0,\ |y_1|<3r_0,\ |z_j|<3r_0\ (j=2,\dots,5),\ |r|<3r_0\bigr\},
\]
where $z=x+iy$ and $r=\log|w|-\operatorname{Re}G(z)$ are the functions of
Lemma~\ref{lem:h0}(c). For $r_0$ sufficiently small, $N_0$ is a
neighbourhood of $C_0$ contained in $W\cap\pi^{-1}(\mathbb{B}_{r_3})$, so that
$r$ is defined on $N_0$ and $\rho=\sigma|y|^2+F_0(x)$ there by
Lemma~\ref{lem:rho}(d), with $F_0$ as in~\eqref{e.F0}; moreover $v=r$ on $N_0$
by Lemma~\ref{lem:h0}(c), and
\[
\Lambda\cap N_0=\{|x_1|<3r_0,\ y_1=0,\ z_2=z_3=z_4=z_5=0,\ r=0\}.
\]
Define $N_1$ in the same way near $C_1$, interchanging $z_1$ and $z_2$ and
replacing $r$ by $r-1$. Since
$t=r+\operatorname{Re}G(z)$ is close to $0$ on $N_0$ and close to $1$ on
$N_1$, we have $N_0\cap N_1=\emptyset$ for $r_0$ small. Let
$\chi\in C^\infty(\mathbb{R},[0,1])$ satisfy $\chi=1$ on $[-r_0,r_0]$ and
$\operatorname{supp}\chi\subset(-2r_0,2r_0)$, and let
\[
\psi_0=\chi(x_1)\,\vartheta(y_1,r,z_2,z_3,z_4,z_5),
\]
where $\vartheta$ is a smooth function of $(y_1,r,z_2,\dots,z_5)$ with values in
$[0,1]$, equal to $1$ near $0$ and supported in
$\{|y_1|,|r|,|z_j|<2r_0\}$. Then $\psi_0\in C^\infty_c(N_0)$, and
$\psi_0=\chi(x_1)$ on $\Lambda\cap N_0$. Define $\psi_1\in C^\infty_c(N_1)$
in the same way, with the roles of $z_1$ and $z_2$ interchanged.
Extended by zero, $\psi_0$ and $\psi_1$ are smooth functions on $X$ with
disjoint compact supports. For $\kappa>0$ put
\[
g_\kappa=\tfrac{\kappa}{2}\bigl(\psi_0\operatorname{Re}z_2^2+\psi_1\operatorname{Re}z_1^2\bigr)\in C^\infty_c(X),
\qquad
h=h_0+g_\kappa\ \text{ on }W,
\qquad
\omega_\kappa=\widehat\omega+dd^cg_\kappa .
\]
Then $\omega_\kappa$ is a smooth, closed, real $(1,1)$-form on $X$,
cohomologous to $\widehat\omega$, and $\omega_\kappa=dd^ch$ on $W$.

\begin{lemma}\label{lem:correction}
For $\kappa>0$ sufficiently small the following hold.
\begin{enumerate}
\item[(a)] $\omega_\kappa$ is a Kähler form on $X$.
\item[(b)] $h|_\Lambda=0$, $dh|_\Lambda=0$, and $\operatorname{Hess}_qh$ is
  positive definite on $T_qX/T_q\Lambda$ for every $q\in\Lambda$.
\item[(c)] There are a neighbourhood $W''$ of $\Lambda$ and $c>0$ with
  $h\ge c\operatorname{dist}(\cdot,\Lambda)^2$ on $W''$.
\end{enumerate}
\end{lemma}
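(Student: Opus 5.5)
The plan is to treat $g_\kappa$ as a small compactly supported perturbation: it is $O(\kappa)$ in $C^2$ but supplies the missing Hessian direction along $C_0\cup C_1$.

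\emph{Part (a).} Outside $\operatorname{supp}\psi_0\cup\operatorname{supp}\psi_1$ we have $\omega_\kappa=\widehat\omega$, which is Kähler. The two supports are compact and disjoint, and $\widehat\omega\ge m\,g$ on them for some $m>0$ and the fixed Hermitian metric $g$. Since $\|dd^cg_\kappa\|_{C^0}\le C\kappa$, with $C$ depending only on $\psi_0,\psi_1$, positivity on the supports follows from the inequality $\alpha\ge -c_0\|\alpha\|_{C^0}\,g$ used in Step~4 of Case~1. This gives (a) once $\kappa<m/(c_0C)$.

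\emph{Part (b).} The function $\operatorname{Re}z_2^2=x_2^2-y_2^2$ vanishes to second order on $\Lambda\cap N_0$, since $z_2=0$ there; the same holds for $\operatorname{Re}z_1^2$ on $\Lambda\cap N_1$. Hence $g_\kappa|_\Lambda=0$ and $dg_\kappa|_\Lambda=0$. Combined with Lemma~\ref{lem:h0}(a), this gives $h|_\Lambda=0$ and $dh|_\Lambda=0$. Also, at $q\in\Lambda$ the terms involving derivatives of $\psi_j$ drop out, so
\[
\operatorname{Hess}_qh=\operatorname{Hess}_qh_0+\kappa\,\psi_j(q)\,(dx_2^2-dy_2^2)
\]
near $C_0$, and the analogous formula holds near $C_1$.

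We then split into two cases.
\begin{itemize}
\item \emph{Points of $\Lambda\cap N_0$, using the coordinates of Lemma~\ref{lem:h0}(c).} By \eqref{eq:hess-h0}, at $q$ with $x_1=x_1(q)$ and $\psi_0(q)=\chi(x_1)$,
\[
\operatorname{Hess}_qh=(2\sigma-\kappa\chi)dy_2^2+\sum_{j\ne2}2\sigma\,dy_j^2+(2x_1^2+\kappa\chi)\,dx_2^2+2\sum_{j\ge3}dx_j^2+2\,dr^2 .
\]
This is positive definite on the span of the ten independent covectors, which annihilate $T_q\Lambda$. Indeed, $2\sigma-\kappa>0$ for small $\kappa$, and $2x_1^2+\kappa\chi(x_1)>0$ always: $\chi=1$ for $|x_1|\le r_0$, and $x_1^2\ge r_0^2$ otherwise. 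So the form is positive definite on $T_qX/T_q\Lambda$.
\item \emph{Points of $\Lambda$ off $C_0\cup C_1$.} The form $\operatorname{Hess}h_0$ is positive definite on the normal bundle by Lemma~\ref{lem:h0}(b). For points outside $N_0\cup N_1$ nothing changes. Inside $N_0\cup N_1$ but with $|x_1|\ge r_0$ (resp.\ $|x_2|\ge r_0$), the explicit formula above already applies. In any case, the set of points of $\operatorname{supp}\psi_j\cap\Lambda$ away from the circles is compact, so a uniform lower bound absorbs the $O(\kappa)$ perturbation.
\end{itemize}

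\emph{Part (c).} This is the standard consequence of (b). We use a tubular coordinate $\nu$ in the normal bundle over the compact $\Lambda$ and Taylor expand $h$ at points of $\Lambda$; the zeroth and first order terms vanish. The Hessian is uniformly positive definite on normal vectors by compactness and continuity, so
\[
h(\exp_p\nu)\ge c|\nu|^2-C|\nu|^3 ,
\]
which gives $h\ge c'\operatorname{dist}(\cdot,\Lambda)^2$ on a small neighbourhood $W''$.

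The main obstacle is the uniformity in (b). Near $C_0$ we must ensure that the $-\kappa\,dy_2^2$ term does not destroy positivity, and that the region where the cutoff $\chi$ decays is one where $x_1^2$ is already bounded below. The explicit formula handles both, provided the chosen $r_0$ is fixed before $\kappa$.
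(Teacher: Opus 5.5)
Your proposal is correct and follows essentially the same route as the paper: a $C^0$-smallness argument on the compact support of $dd^cg_\kappa$ for (a), the explicit Hessian formula from \eqref{eq:hess-h0} on all of $\Lambda\cap N_0$ (and $\Lambda\cap N_1$) together with Lemma~\ref{lem:h0}(b) at the remaining points for (b), and uniform Taylor expansion over the compact torus for (c). The closing ``uniform lower bound absorbs the $O(\kappa)$ perturbation'' remark in your second bullet is unnecessary, and as stated it is not right: $\operatorname{Hess}h_0$ degenerates as one approaches $C_0\cup C_1$ (the $dx_2^2$ coefficient is $2x_1^2$), and $\operatorname{supp}\psi_j\cap\Lambda$ with the circles removed is not compact. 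It can simply be dropped, since the explicit formula already handles every point of $\Lambda\cap N_0$ and $\Lambda\cap N_1$.
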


\begin{proof}
(a) Put $T=dd^c\bigl(\tfrac12\psi_0\operatorname{Re}z_2^2+\tfrac12\psi_1\operatorname{Re}z_1^2\bigr)$,
so that $dd^cg_\kappa=\kappa T$. This is a smooth real $(1,1)$-form on $X$
with support in the compact set
$S=\operatorname{supp}\psi_0\cup\operatorname{supp}\psi_1$. Fix a Hermitian
metric $g$ on $X$. Since $\widehat\omega$ is positive definite and $S$ is
compact, there is $m>0$ with $\widehat\omega\ge2m\,g$ on $S$, and there is
$C>0$ with $T\ge-C\,g$ on $S$. For $\kappa<m/C$ we get
$\omega_\kappa\ge m\,g$ on $S$, while $\omega_\kappa=\widehat\omega>0$ on
$X\setminus S$. Hence, $\omega_\kappa$ is a Kähler form.

(b) On $N_0$ the function $\operatorname{Re}z_2^2$ vanishes to second
order along $\{z_2=0\}\supset\Lambda\cap N_0$. Therefore,
$\psi_0\operatorname{Re}z_2^2$ and its differential vanish on
$\Lambda$, and for $q\in\Lambda\cap N_0$
\[
\operatorname{Hess}_q\bigl(\psi_0\operatorname{Re}z_2^2\bigr)
=\psi_0(q)\operatorname{Hess}_q\operatorname{Re}z_2^2
=\chi(x_1(q))\operatorname{Hess}_q(x_2^2-y_2^2).
\]
The same holds near $C_1$. Together with Lemma~\ref{lem:h0}(a) this gives
$h|_\Lambda=0$ and $dh|_\Lambda=0$. Since $h$ and $dh$ vanish along
$\Lambda$, we have $\operatorname{Hess}_qh(\xi,\eta)=0$ whenever
$\xi\in T_q\Lambda$, so $\operatorname{Hess}_qh$ descends to a quadratic form
on $T_qX/T_q\Lambda$.

Let $q\in\Lambda\cap N_0$ and write $x_1=x_1(q)$. Then $\psi_1=0$ near
$q$, and by~\eqref{eq:hess-h0}
\[
\operatorname{Hess}_qh=2\sigma\,dy_1^2+\bigl(2\sigma-\kappa\chi(x_1)\bigr)dy_2^2+2\sigma\sum_{j=3}^5dy_j^2
+\bigl(2x_1^2+\kappa\chi(x_1)\bigr)dx_2^2+2\sum_{j=3}^5dx_j^2+2\,dr^2 .
\]
By Lemma~\ref{lem:h0}(c), the ten covectors appearing here form a basis of the
dual of $T_qX/T_q\Lambda$. For $0<\kappa\le1$ all coefficients are at least
$\min(\kappa,2r_0^2)$: indeed,
$2x_1^2+\kappa\chi(x_1)\ge\kappa$ for $|x_1|\le r_0$ and $\ge2r_0^2$ for
$|x_1|\ge r_0$, and $2\sigma-\kappa\chi(x_1)\ge1$, because $\sigma\ge1$ and $\kappa\le1$. The situation near $C_1$
is symmetric. At the remaining points $q\in\Lambda\setminus(N_0\cup N_1)$,
the function $g_\kappa$ vanishes identically near $q$, and
$q\notin C_0\cup C_1$, so $\operatorname{Hess}_qh=\operatorname{Hess}_qh_0$ is
positive definite on $T_qX/T_q\Lambda$ by Lemma~\ref{lem:h0}(b).

(c) By (b) and the compactness of $\Lambda$, there is $c>0$ such that
$\operatorname{Hess}_qh\ge2c\,|\cdot|^2$ on the orthogonal complement of
$T_q\Lambda$ for every $q\in\Lambda$. Taylor's formula, applied uniformly on
the compact set $\Lambda$, gives $h\ge c\operatorname{dist}(\cdot,\Lambda)^2$
on a neighbourhood $W''$ of $\Lambda$.
\end{proof}

From now on $\kappa$ is fixed so that Lemma~\ref{lem:correction} holds, and we write
$\omega=\omega_\kappa$. Thus $\omega$ is a Kähler form on $X$, and $\omega=dd^ch$ on $W$.

\subsection{Proof of Theorem~\ref{thm:counterexample}}
Let $c>0$ be as in Lemma~\ref{lem:correction}(c). Since $\Lambda$ is compact,
after shrinking the neighbourhood $W''$ of that lemma we may assume that
$W''=\{\operatorname{dist}(\cdot,\Lambda)<r_1\}$ for some $r_1>0$ and that
$W''\subset W$, so that $h$ is defined on $W''$ and $\omega=dd^ch$ there.
We fix $0<\varepsilon<cr_1^2/4$ and put
\[
U=W'',\qquad h_\varepsilon=h-\varepsilon\colon U\to\mathbb{R},\qquad K=\{x\in U:\ h_\varepsilon(x)\le0\}.
\]
Then $K\subset\{\operatorname{dist}(\cdot,\Lambda)\le\sqrt{\varepsilon/c}\}\Subset U$
is compact, $\Lambda=\{h=0\}\subset\operatorname{int}K$, and $h_\varepsilon$
is strictly plurisubharmonic with $dd^ch_\varepsilon=\omega|_U$, where
$\omega$ is a Kähler form on $X$ by Lemma~\ref{lem:correction}(a).
This proves (i), and we rename $h_\varepsilon$ as $h$ in the
statement of the theorem. Decreasing $\varepsilon$ if necessary, we may also
assume that $\varepsilon$ is a regular value of $h$, so that $K$ is a compact
domain with smooth strictly pseudoconvex boundary $\{h=\varepsilon\}$.
Indeed, by Lemma~\ref{lem:correction}(b) the function $h$ vanishes to second
order along $\Lambda$ with transversally nondegenerate Hessian, so by Taylor's
formula $|dh|\ge c'\operatorname{dist}(\cdot,\Lambda)$ on a neighbourhood of
$\Lambda$ for some $c'>0$; hence $h$ has no critical points near $\Lambda$
outside $\Lambda$, while $\{h\le\varepsilon\}$ shrinks to $\Lambda$ as
$\varepsilon\to0$. The torus $\Lambda\subset\operatorname{int}K$ is smooth,
compact and totally real, as shown in Sec.~\ref{ssec:curve}.

\medskip
\noindent\textit{Isotropy and proof of (ii).} By Lemma~\ref{lem:correction}(b),
$dh=0$ at every point of $\Lambda$, as a covector on all of $T_qX$; this is
unchanged by the constant $\varepsilon$. Since $d^ch=-\tfrac14\,dh\circ J$,
the $1$-form $d^ch$ also vanishes on $T_qX$ for every $q\in\Lambda$, not merely on
$T_q\Lambda$. If $\iota\colon\Lambda\to X$ is the inclusion, then
$\iota^*d^ch=0$, and hence $\iota^*(dd^ch)=d(\iota^*d^ch)=0$. Thus:
{\it if a 2-form  $\Omega$ on $X$ coincides with $\lambda\,dd^c h$  near $\Lambda$ for some $\lambda\ge 0$
then $\Omega|_\Lambda=0$, i.e., $\iota^*\Omega = 0$.}
In particular $\omega|_\Lambda=0$, which proves (ii).

\medskip
For (iii) and (iv) we use the holomorphic annulus
\[
A=\{0\}\times\{1\le|w|\le e\},
\]
oriented as a complex curve, and the annulus
$B=\{(\gamma(s),e^{\tau(s)+i\theta}):0\le s\le1\}\subset\Lambda$, oriented
by $ds\wedge d\theta$. Orient $C_0$ and $C_1$ by increasing $\theta$. Then
$\partial A=C_1-C_0=\partial B$, so $\Sigma=A-B$ is a closed
(piecewise smooth) integral $2$-cycle in $X$.

\begin{lemma}\label{lem:period}
Suppose that $\Lambda$ is isotropic with respect to a Kähler form
$\Omega$ on $X$, i.e.,
$\Omega|_\Lambda=0$. Then $\int_\Sigma\Omega=\int_A\Omega>0$. In particular
$[\Omega]\ne0$ in $H^2(X,\mathbb{R})$.
\end{lemma}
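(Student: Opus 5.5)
The plan is to split the period over $\Sigma=A-B$ into the two pieces and treat them separately. Since $B\subset\Lambda$ and $\Omega|_\Lambda=0$, we have $\iota^*\Omega=0$ on $\Lambda$. Pulling back to $B$ under the parametrization $(s,\theta)\mapsto(\gamma(s),e^{\tau(s)+i\theta})$, $0\le s\le1$, factors through $\Lambda$. Hence the pullback of $\Omega$ to $[0,1]\times\mathbb{R}/2\pi\mathbb{Z}$ vanishes identically, and $\int_B\Omega=0$. It follows that
\[
\int_\Sigma\Omega=\int_A\Omega-\int_B\Omega=\int_A\Omega .
\]

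Next I will show $\int_A\Omega>0$. The annulus $A=\{0\}\times\{1\le|w|\le e\}$ is a compact piece of the complex curve $\{0\}\times\mathbb{C}^*$ with nonempty interior, oriented by its complex structure. For a Kähler form $\Omega$ and any nonzero tangent vector $\xi$ of this curve, $\Omega(\xi,J\xi)>0$. In the coordinate $w=e^{t+i\theta}$, the pullback of $\Omega$ to $A$ is $f\,dt\wedge d\theta$ with $f>0$ continuous. Here $dt\wedge d\theta$ is the complex orientation, because $\partial_\theta=J\partial_t$ for $\log w=t+i\theta$. Integrating gives $\int_A\Omega=\int_0^1\!\int_0^{2\pi}f\,d\theta\,dt>0$.

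Finally, $\Sigma$ is a closed integral $2$-cycle: $\partial A=C_1-C_0=\partial B$ with the stated orientations, and $\Sigma$ can be triangulated as a piecewise smooth singular cycle. By Prop.~\ref{p.UCT}(a), the period $\int_\Sigma\Omega$ depends only on $[\Omega]$ and $[\Sigma]\in H_2(X,\mathbb{Z})$. It is nonzero, so $\Omega$ is not exact, i.e.\ $[\Omega]\ne0$.

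The only mildly delicate point is orientation bookkeeping. Checking $\partial A=\partial B$ with the same signs is already done in the text, so the cycle is genuinely closed. Beyond that, one only needs that $\Sigma$ is merely piecewise smooth, and integration of closed forms over piecewise smooth chains computes periods (smooth singular chains, as in the de Rham theorem cited from \cite{Lee}).
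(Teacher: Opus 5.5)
Your proposal is correct and follows essentially the same route as the paper. Isotropy kills $\int_B\Omega$, positivity of a Kähler form on the complex annulus $A$ gives $\int_A\Omega>0$, and the resulting nonzero period on the cycle $\Sigma=A-B$ forces $[\Omega]\ne0$; your local computation $\Omega|_A=f\,dt\wedge d\theta$ with $f>0$ just spells out the positivity step the paper states in one line.
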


\begin{proof}
Since $B\subset\Lambda$, we have $\int_B\Omega=0$. Also $\int_A\Omega>0$, because $A$ is a complex curve and
$\Omega$ is Kähler. Hence, $\langle[\Omega],[\Sigma]\rangle=\int_A\Omega-\int_B\Omega>0$.
\end{proof}

\noindent\textit{Proof of (iii).} Let $\lambda>0$, and let $\Omega$ be a Kähler form with
$\Omega=dd^c(\lambda h)$ near $K\supset\Lambda$. By the above discussion we have $\Omega|_\Lambda=0$,
so $[\Omega]\ne0$ by Lemma~\ref{lem:period}. By Lemma~\ref{lem:topX} we have $\mathcal H_X=0$,
so $\Omega$ is not a Hodge form. Taking $\lambda=N$ shows that $dd^c(Nh)$
has no Hodge extension. \qed

\medskip
\noindent\textit{Proof of (iv).} We first show that
$A\subset\widehat\Lambda_{\mathcal M(X)}$. Since $\partial A=C_0\cup C_1\subset\Lambda$,
it suffices to show that every complex hypersurface $Z$ of $X$ which meets
$A$ also meets $\Lambda$. Suppose, to the contrary, that $Z\cap A\ne\emptyset$
and $Z\cap\Lambda=\emptyset$.
We show that the intersection number of $Z$ with the cycle $\Sigma$ is
positive, while it must vanish because $\mathcal H_X=0$.

Since $X$ is a complex manifold, $Z$ is the zero set of a holomorphic section
$s$ of the holomorphic line bundle $L=[Z]$. Fix a
Hermitian metric on $L$, and let $c_1(L)$ be its Chern form. This is a closed
real $(1,1)$-form which represents the image of the Chern class of $L$ under
$H^2(X,\mathbb Z)\to H^2(X,\mathbb R)$ \cite[Ch.~1, \S1]{GH}. By
Lemma~\ref{lem:topX} this image is $0$, so
\[
  \int_\Sigma c_1(L)=0 .
\]
On the other hand, if $e$ is a local holomorphic frame of $L$ and $s=fe$, then
$c_1(L)=-\frac1\pi\,dd^c\log|e|^2$, and $\log|s|^2=\log|f|^2+\log|e|^2$ with
$\log|f|^2$ pluriharmonic away from $Z$. Therefore,
\[
  c_1(L)=-\tfrac1\pi\,dd^c\log|s|^2\qquad\text{on }X\setminus Z .
\]
Since $B\subset\Lambda\subset X\setminus Z$, Stokes' theorem gives
$\int_Bc_1(L)=-\frac1\pi\int_{\partial B}d^c\log|s|^2$. The annulus $A$ is
not contained in $Z$, because $\partial A\subset\Lambda$; hence
$A\cap Z=\{p_1,\dots,p_r\}$ is a finite nonempty set in the interior of $A$. Let $m_j\ge1$ be the order of vanishing of $s|_A$ at
$p_j$. By the Poincar\'e--Lelong formula on the curve $A$
\cite[Ch.~3, \S2]{GH}, which here reduces to the identity
$\frac1\pi\,dd^c\log|w-w_0|^2=\delta_{w_0}$ in one variable, we have
$\frac1\pi\,dd^c\log|s|^2=\sum_jm_j\delta_{p_j}-c_1(L)$ as currents on $A$,
and therefore
\[
  \int_Ac_1(L)=\sum_{j=1}^r m_j-\frac1\pi\int_{\partial A}d^c\log|s|^2 .
\]
Since $\partial A=\partial B$, we obtain
\[
  \int_\Sigma c_1(L)=\int_Ac_1(L)-\int_Bc_1(L)=\sum_{j=1}^r m_j\ge1 ,
\]
a contradiction. Hence, $A\subset\widehat\Lambda_{\mathcal M(X)}$.

Since $A\cap\Lambda=\Lambda\cap\pi^{-1}(0)=\partial A$ and $K$ shrinks to
$\Lambda$ as $\varepsilon\to0$, after decreasing $\varepsilon$ if necessary
we have $A\not\subset K$. Since $\Lambda\subset K$ implies
$\widehat\Lambda_{\mathcal M(X)}\subset\widehat K_{\mathcal M(X)}$, both hulls contain
points of $A\setminus K$. Therefore, neither $\Lambda$ nor $K$ is meromorphically
convex. This completes the proof of Theorem~\ref{thm:counterexample}. \qed

\medskip
Note that the fact that $K$ is not meromorphically convex also follows from
(iii) and the main theorem of~\cite{BGS}: a Hodge extension of $dd^ch$ would
contradict (iii).



\begin{thebibliography}{99}

\bibitem{AF} Andreotti, A., Frankel, T. \textit{The Lefschetz theorem on hyperplane sections.}
Ann. of Math. (2) 69 (1959), 713--717.

\bibitem{BGS} Boudreaux, B. J., Gupta, P., Shafikov, R. \textit{Hypersurface convexity and extension of Kähler forms.} Math. Z. 309 (2025), no. 1, Paper No. 15.

\bibitem{BS} Boudreaux, B. J., Shafikov, R. \textit{Meromorphic convexity on Stein manifolds.}
Indiana Univ. Math. J. 75 (2026), no. 1, 87--104.

\bibitem{CS} S-C Chen and M-C Shaw. Partial Differential Equations in Several Complex Variables. AMS/IP Studies in Advanced Mathematics, 2001.

\bibitem{CE} Cieliebak, K., Eliashberg, Y. \textit{New applications of symplectic topology in several complex variables.} J. Geom. Anal. 31 (2021), no. 3, 3252-3271.

\bibitem{Co} Col\c{t}oiu, M. \textit{On hulls of meromorphy and a class of Stein manifolds.} Ann. Scuola Norm. Sup. Pisa Cl. Sci. (4) 28 (1999), 405-412.

\bibitem{DS} Duval, J., Sibony, N. \textit{Polynomial convexity, rational convexity, and currents.} Duke Math. J. 79 (1995), no. 2, 487--513.

\bibitem{GH} Griffiths, P., Harris, J. \textit{Principles of algebraic geometry.}
Wiley-Interscience, New York, 1978.

\bibitem{Gu} Guedj, V. \textit{Approximation of currents on complex manifolds.} Math. Ann. 313 (1999), no. 3, 437--474.

\bibitem{Hatcher} Hatcher, A. \textit{Algebraic topology.} Cambridge University Press, Cambridge, 2002.

 \bibitem{Hormander} H\"ormander, L. \emph{An Introduction to Complex
   Analysis in Several Variables}, 3rd ed., North-Holland, 1990.
   
   \bibitem{Hu} Hulek, K. \textit{A remark on Runge approximation of meromorphic functions.} Ann. Scuola Norm. Sup. Pisa Cl. Sci. (4) 6 (1979), 185--191.
   
   \bibitem{Lee} Lee, J. M. \textit{Introduction to smooth manifolds.} 2nd ed.,
Graduate Texts in Mathematics 218, Springer, New York, 2013.

    \bibitem{Na} Narasimhan, R.  On the homology groups of Stein
   spaces, Invent. Math. 2 (1967), 377--385.

    \bibitem{RS} Rourke, C. P., Sanderson, B. J. \emph{Introduction to
   Piecewise-Linear Topology}, Springer, Berlin, 1972.

\bibitem{Sch} Schwarz, G. \textit{Hodge decomposition---a method for solving boundary value problems.}
Lecture Notes in Mathematics 1607, Springer-Verlag, Berlin, 1995.

    \bibitem{Serre} Serre, J.-P.  Quelques probl\`emes globaux relatifs aux
   vari\'et\'es de Stein, Colloque sur les fonctions de plusieurs
   variables, Bruxelles, 1953, 57--68.

\end{thebibliography}
\end{document}